\newtheorem{theorem}{Theorem}[section]
\newtheorem{proposition}[theorem]{Proposition}
\newtheorem{corollary}[theorem]{Corollary}
\newtheorem{lemma}[theorem]{Lemma}
\newtheorem{definition}[theorem]{Definition}
\newtheorem{remark}[theorem]{\bf Remark}
\DeclareMathOperator{\Frac}{Frac}
\DeclareMathOperator{\Ad}{Ad}
\DeclareMathOperator{\ad}{ad}
\DeclareMathOperator{\height}{ht}
\newcommand{\system}{S}
\newcommand{\diffpolyring}{R}
\DeclareMathOperator{\Sol}{Sol}
\newcommand{\N}{\mathbb{N}}
\newcommand{\Z}{\mathbb{Z}}
\newcommand{\Q}{\mathbb{Q}}
\newcommand{\C}{\mathbb{C}}
\newcommand{\gauge}[2]{#1 . #2}
\newcommand{\dlog}{\ell\delta}
\newcommand{\group}{G}
\newcommand{\GL}{\mathrm{GL}}
\newcommand{\SL}{\mathrm{SL}}
\newcommand{\liealg}{\mathfrak{g}}
\newcommand{\gl}{\mathfrak{gl}}
\newcommand{\liesl}{\mathfrak{sl}}
\newcommand{\field}{C}
\newcommand{\difffield}{F}
\newcommand{\extfield}{E}
\newcommand{\resfield}{\mathcal{K}}
\newcommand{\algext}{\mathcal{L}}
\newcommand{\diffring}{\mathcal{R}}
\newcommand{\locdiffring}{\mathcal{S}}
\newcommand{\multset}{\mathcal{M}}
\newcommand{\ideal}{I}
\newcommand{\otherfield}{K}
\newcommand{\juanext}{\mathcal{E}_J}
\newcommand{\generalext}{\mathcal{E}}
\newcommand{\nongenext}{L}
\newcommand{\roots}{\Phi}
\newcommand{\rootbasis}{\Delta}
\newcommand{\weyl}{\mathcal{W}}
\newcommand{\cartan}{\mathfrak{h}}
\newcommand{\torus}{T}
\newcommand{\borel}{B}
\newcommand{\unipotent}{U}
\newcommand{\ALiou}{A_{\rm Liou}}
\newcommand{\YLiou}{Y_{\rm Liou}}
\newcommand{\calYLiou}{\mathcal{Y}_{\rm Liou}}
\newcommand{\bap}{\mathbf{a}^+}
\newcommand{\bam}{\mathbf{a}^-}
\newcommand{\baz}{\mathbf{a}^0}
\newcommand{\bb}{\mathbf{b}}
\newcommand{\baml}{\mathbf{a}_{\neg \Psi}^-}
\newcommand{\bapi}{\mathbf{\overline{a}}^+}
\newcommand{\bami}{\mathbf{\overline{a}}^-}
\newcommand{\bazi}{\mathbf{\overline{a}}^0}
\newcommand{\bbi}{\mathbf{\overline{b}}}
\newcommand{\bamil}{\mathbf{\overline{a}}_{\neg \Psi}^-}
\newcommand{\api}{\overline{a}^+}
\newcommand{\ami}{\overline{a}^-}
\newcommand{\azi}{\overline{a}^0}
\newcommand{\bi}{\overline{b}}
\newcommand{\bhp}{\mathbf{\overline{h}}^+}
\newcommand{\bhm}{\mathbf{\overline{h}}^-}
\newcommand{\bhz}{\mathbf{\overline{h}}^0}
\newcommand{\bgp}{\mathbf{\overline{g}}^+}
\newcommand{\bgm}{\mathbf{\overline{g}}^-}
\newcommand{\bgz}{\mathbf{\overline{g}}^0}
\newcommand{\bt}{\mathbf{\overline{t}}}
\newcommand{\bv}{\mathbf{v}}
\newcommand{\bs}{\mathbf{s}}
\newcommand{\bc}{\mathbf{c}}
 \newcommand{\bz}{\mathbf{z}}
\newcommand{\bsap}{\mathbf{a}^+_{*}}
\newcommand{\bsam}{\mathbf{a}^-_{*}}
\newcommand{\bsaz}{\mathbf{a}^0_{*}}
\newcommand{\bsb}{\mathbf{b}_{*}}
\newcommand{\bsbi}{\mathbf{\overline{b}_{*}}}
\newcommand{\lweyl}{\overline{w}}
\newcommand{\Imw}{\ideal_{\lweyl}}
\newcommand{\ndvar}{n}
\newcommand{\maxd}{\mathrm{max}}
\newcommand{\sep}{\mathrm{sep}}
\newcommand{\ord}{\mathrm{ord}}
\title[Normal Forms for the Classical Groups]{Normal Forms in Differential Galois Theory for the Classical Groups}
\author{Daniel Robertz}
\address{Lehrstuhl f\"ur Algebra und Zahlentheorie, RWTH Aachen University, D--52056 Aachen, Germany} 
\email{daniel.robertz@rwth-aachen.de}
\author{Matthias Sei\ss}
\address{Institut für Mathematik, Universit\"at Kassel, D--34109 Kassel, Germany}
\email{mseiss@mathematik.uni-kassel.de}
\begin{document}

\maketitle
\begin{abstract}
Let $\group$ be a classical group of dimension $d$ and 
let $\boldsymbol{a}=(a_1,\dots,a_d)$ be differential indeterminates over a 
differential field $\difffield$ of characteristic zero with algebraically closed field of constants $\field$.
Further let $A(\boldsymbol{a})$ be a generic element in the Lie algebra $ \liealg(\difffield\langle \boldsymbol{a} \rangle)$
of $\group$ obtained from parametrizing a basis of $\liealg$ with the indeterminates $\boldsymbol{a}$. 
It is known  (cf.~\cite{Juan}) that the differential Galois group of $\boldsymbol{y}'=A(\boldsymbol{a})\boldsymbol{y}$ over $F\langle \boldsymbol{a} \rangle$ is $\group(\field)$.
In this paper we construct a differential field extension $\algext$ of $F\langle \boldsymbol{a} \rangle$ such that 
the field of constants of $\algext$ is $\field$, the differential Galois group of $\boldsymbol{y}'=A(\boldsymbol{a})\boldsymbol{y}$ over $\algext$ is still the full group $\group(\field)$ and $A(\boldsymbol{a})$ is gauge equivalent over $\algext$ to a matrix in normal form which we introduced in \cite{Seiss}. We also consider specializations of the coefficients of $A(\boldsymbol{a})$. 
\end{abstract}

\section{Introduction}
In classical Galois theory there is the well-known construction of the general polynomial equation over $\Q$ with Galois group the symmetric group $S_n$.
The coefficients of the general equation are (up to sign) the elementary symmetric polynomials in $n$ indeterminates $x_1$, \ldots, $x_n$ over $\Q$.
Every algebraic extension of $\Q$ defined by a polynomial $p(x)$ of degree $n$ is obtained as a specialization by substituting the roots of $p(x)$ for $x_1$, \ldots, $x_n$ in the general equation.
The case of the symmetric group was generalized by Emmy Noether in \cite{Noether}, which led to new problems in invariant theory (e.g., Noether problem).

In an analogous way a general linear differential equation with differential Galois group the general linear group $\GL_n(\field)$ over an algebraically closed field $\field$ of characteristic zero is obtained as follows.
Extend the linear action of $\GL_n(\field)$ on the vector whose coordinates are differential indeterminates $y_1$, \ldots, $y_n$ to
the differential field of rational functions $\field \langle y_1, \ldots, y_n \rangle$.
Introducing a new differential indeterminate $Y$ and denoting by $w(y_1, \ldots, y_n)$ the Wronskian of $y_1$, \ldots, $y_n$, the general differential equation is given by (cf., e.g., \cite[Chapter~2]{Magid})
\begin{equation}\label{eq:generaldiffeq}
0 \, = \, Y^{(n)} + c_{n-1} Y^{(n-1)} + \ldots + c_0 Y \, := \,
\frac{w(Y, y_1, \ldots, y_n)}{w(y_1, \ldots, y_n)}.
\end{equation}
In fact, $c_{n-1},\ldots,c_0$ are differentially algebraically independent 
generators of the fixed field of
$\field \langle y_1, \ldots, y_n \rangle$ under $\GL_n(\field)$ (and recall that the elementary symmetric polynomials are algebraically independent generators of the fixed field
of $\Q(x_1, \ldots, x_n)$ under $S_n$).
Every Picard-Vessiot extension of a differential field $\difffield$, with field of constants $\field$, defined by a linear differential polynomial of order $n$ is obtained as a specialization of 
\eqref{eq:generaldiffeq} by substituting the linearly independent solutions $\eta_1$, \ldots, $\eta_n$ for $y_1$, \ldots, $y_n$.
Generalizations to groups other than $\GL_n(\field)$ were obtained in \cite{Gold} and
\cite{JuanMagid2007}. In all these cases the general differential equation involves $n$ differential indeterminates over $\field$ (apart from $Y$).

A different approach resulting in general differential equations involving $d = \dim(\group)$ differential indeterminates was introduced
by Lourdes Juan in \cite{Juan}
for connected linear algebraic groups $\group$.
Fixing a representation of $\group \subset \GL_n$ the general differential equation in this case is the matrix differential equation 
defined by a general element $A(\boldsymbol{a})$ of the Lie algebra $\liealg$ of $\group$ with differential indeterminates
$\boldsymbol{a} = (a_1, \ldots, a_d)$ over $\field$ as coefficients. It is shown that the function field $\juanext = \field\langle \boldsymbol{a} \rangle(\group)$ of the group $\group(\field\langle \boldsymbol{a} \rangle)$ of $\field\langle \boldsymbol{a} \rangle$-rational points
is a Picard-Vessiot extension of $\field\langle \boldsymbol{a} \rangle$ for this equation with differential Galois group $\group(\field)$. The $d$ algebraically independent rational functions generating $\juanext$ over $\field\langle \boldsymbol{a} \rangle$
as a field are differentially algebraically independent over $\field$ and generate $\juanext$ as a differential field over $\field$. Both fields $\juanext$ and $\field\langle \boldsymbol{a} \rangle$ are purely differentially transcendental extensions of $\field$ of degree $d$.
This implies that her construction
is generic in the following sense. For a differential field $\difffield$ with field of constants $\field$ suppose that $A \in \gl_n(\difffield)$ defines a Picard-Vessiot extension of $\difffield$
with differential Galois group a connected subgroup of $\group(\field)$. Then there are elements 
$\boldsymbol{f}=(f_1,\dots,f_d)$ in $\difffield$
such that the matrix $A$ is gauge equivalent to the matrix $A(\boldsymbol{f})$ obtained by specializing the coefficients $\boldsymbol{a}$ in $A(\boldsymbol{a})$ to $\boldsymbol{f}$.

A third approach to constructing general differential equations for the classical groups was presented in \cite{Seiss}. This approach combines the geometric structure of a classical group $\group$ of Lie rank $l$ with Picard-Vessiot theory and involves only
$l$ differential indeterminates. More precisely, the construction starts 
with a differential field $\field\langle \bv \rangle$ generated by $l$ differential indeterminates $\bv=(v_1,\dots,v_l)$ over $\field$. The general extension field in this approach is   
a Liouvillian extension $\generalext$ of $\field\langle \bv \rangle$ with differential Galois group a fixed Borel subgroup $\borel^-(\field)$ of $\group(\field)$.
As in the case of $\GL_n(\field)$ we construct a fundamental matrix $Y$ and define an action of $\group(\field)$ on it which will then induce an action on $\generalext$.
Fixing a Chevalley basis of $\liealg$, the defining matrix of $\generalext$ is chosen such that its conjugate by a representative $\lweyl$ of the longest Weyl group element is the sum of the Cartan subalgebra, parametrized by $\bv$, and the basis elements of the root spaces corresponding to the simple roots. 
Choosing a fundamental matrix $b \in \borel^-(\generalext)$ generating $\generalext$ over $C\langle \bv \rangle$,
we can construct a matrix 
$u$ in the maximal unipotent subgroup of $\borel^-(\field\langle \bv \rangle)$ such that the logarithmic derivative of 
$Y= u \lweyl b$
is the matrix $A_{\group}(\bs)$ constructed in \cite{Seiss} and \cite{Seiss_Generic}. The differential polynomials $\bs=(s_1,\dots,s_l)$ in $\field\{ \bv \}$ are differentially algebraically independent over $\field$. The matrix $u$ is the product of matrices of root groups corresponding to all negative roots $\roots^-$ and it  depends on $|\roots^-|$ differential polynomials $\boldsymbol{p}$ in $\field\{\bv \}$, those corresponding to the negative simple roots being the indeterminates $\bv$. Multiplying $Y$ from the right by an element of the full group $\group(\field)$ and then taking the Bruhat decomposition defines 
an action on $\boldsymbol{p}$ and on the generators of the Liouvillian extension, i.e.\ the entries of $b$, and therefore on $\generalext$. The fixed field under the induced action of $\group(\field)$ on $\generalext$ is $\field\langle \bs  \rangle$ and it is shown that 
the extension $\generalext$ of $\field\langle \bs \rangle$ is a Picard-Vessiot extension with differential Galois group $\group(\field)$.
The construction is only generic for Picard-Vessiot extensions of $\difffield$ with defining matrix gauge equivalent to a matrix in \emph{normal form}, i.e.\ a specialization of $A_{\group}(\bs)$. Deciding such a gauge equivalence is non-trivial as a consequence of the fact that $\generalext$ and $C\langle \bs \rangle $ have differential transcendence degree $l$ over $C$.

The purpose of this paper is to contribute to the question of 
the genericity properties of the general extension  $\generalext$ over $C\langle \bs \rangle$.
We study the relationship of the two general extensions $\generalext/\field\langle \bs \rangle$ and $\juanext / \field\langle \boldsymbol{a} \rangle$ in detail. Clearly the matrix $A_{\group}(\bs)$ can be obtained by a specialization of $A(\boldsymbol{a})$ and so Juan's general extension $\juanext /  C\langle \boldsymbol{a} \rangle$ is 
generic for $\generalext/\field\langle \bs \rangle$. For the opposite direction we need to solve the problem of gauge equivalence of $A(\boldsymbol{a})$ to a matrix in normal form. Over the differential field $\field\langle \boldsymbol{a} \rangle$ this gauge equivalence does not hold in general\footnote{A simple counterexample is the case $\group = \SL_2$, where gauge transformation of $A(\boldsymbol{a})$ into normal form is only possible over an extension $\algext$ of $\field\langle \boldsymbol{a} \rangle$, for example, a quadratic extension.}. Therefore we construct a differential extension field allowing gauge transformation into normal form and preserving the differential Galois group.
Our main theorem is

\begin{theorem}[Theorem~\ref{thm:gauge_eq_&_full_group}] \label{thm:introduction}
There is a differential extension field $\algext$ of 
$C\langle \boldsymbol{a} \rangle$ with field of constants $\field$ 
such that $A(\boldsymbol{a})$ is 
gauge equivalent over $\algext$ to a specialization of $A_{\group}(\bs)$ and the differential 
Galois group of $A(\boldsymbol{a})$ over $\algext$ is the full group $\group(\field)$.
\end{theorem}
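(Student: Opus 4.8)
The plan is to realize the gauge equivalence through a factorization of Juan's fundamental solution matrix. By \cite{Juan} there is a fundamental matrix $\mathcal{F} \in \group(\juanext)$ of $\boldsymbol{y}' = A(\boldsymbol{a})\boldsymbol{y}$ with $\dlog(\mathcal{F}) = A(\boldsymbol{a})$ and $\juanext = \field\langle \boldsymbol{a} \rangle(\mathcal{F})$. Since $\dlog(g\mathcal{F}) = \gauge{g}{A(\boldsymbol{a})}$ for $g \in \group(\algext)$, and a fundamental matrix is determined by its logarithmic derivative only up to right multiplication by $\group(\field)$, asking that $A(\boldsymbol{a})$ be gauge equivalent over $\algext$ to a specialization $A_{\group}(\bs_{0})$ of the normal form is the same as asking for a factorization $g\mathcal{F} = u\,\lweyl\, b$, where $u\,\lweyl\, b$ is a fundamental matrix of the type used in \cite{Seiss} to define $A_{\group}(\bs)$: here $b \in \borel^-$ is the Liouvillian factor generating $\generalext$ and $u$ lies in the maximal unipotent subgroup $\unipotent^-$. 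Thus the whole problem reduces to producing such a factorization with $g$, and the resulting parameters $\bs_{0}$, defined over a suitably small extension $\algext$ of $\field\langle \boldsymbol{a} \rangle$.

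Second, I would not match the Bruhat factors of $\mathcal{F}$ naively — that would place all factors, and hence $g$, inside $\juanext$ — but instead exploit the $l$ free parameters $\bv$ of the normal form. Writing the generic Bruhat decomposition $\mathcal{F} = n\,\lweyl\, b_0$ with $n$ unipotent and $b_0 \in \borel^-$ (generically $\mathcal{F}$ lies in the big cell of the longest Weyl element), I would choose $\bv$ (equivalently $\bs_{0}$) and the Liouvillian factor $b$ so as to absorb the transcendental, solution-field part of $n$ and $b_0$ into the generators of $\generalext$, leaving $g = (u\,\lweyl\, b)\mathcal{F}^{-1}$ with entries algebraic over $\field\langle \boldsymbol{a} \rangle$. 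The residual, algebraic part of the normalization — to the fixed representative $\lweyl$ and to the prescribed Cartan-plus-simple-roots shape — is what forces a field extension, namely the square root in the $\SL_2$ case of the footnote. I would then let $\algext$ be the extension of $\field\langle \boldsymbol{a} \rangle$ generated by the entries of $g$ and the values $\bs_{0}$, and verify through the defining equations of \cite{Seiss} that $\dlog(u\,\lweyl\, b) = A_{\group}(\bs_{0})$ indeed holds.

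Third, I would settle the two field-theoretic properties. Because $\algext/\field\langle \boldsymbol{a} \rangle$ is algebraic and $\field$ is algebraically closed and equal to the field of constants of $\field\langle \boldsymbol{a} \rangle$, every constant of $\algext$ is algebraic over $\field$ and hence lies in $\field$, so the constants of $\algext$ are exactly $\field$. For the Galois group, I would use that $\juanext = \field\langle \boldsymbol{a} \rangle(\group)$ is the function field of the connected — hence geometrically integral — group $\group$, so that $\juanext/\field\langle \boldsymbol{a} \rangle$ is a regular extension and therefore linearly disjoint from the algebraic extension $\algext$. Consequently $\juanext \cap \algext = \field\langle \boldsymbol{a} \rangle$, the compositum $\juanext\algext$ is a Picard--Vessiot extension of $\algext$ for $A(\boldsymbol{a})$ with no new constants, and its differential Galois group equals $\mathrm{Gal}(\juanext/\field\langle \boldsymbol{a} \rangle) = \group(\field)$, the full group.

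The hard part will be the separation carried out in the second step: the naive Bruhat factorization of $\mathcal{F}$ has all of its factors in the solution field $\juanext$, and the entire point is to peel off only an \emph{algebraic} amount of data as $g$ while pushing the transcendental remainder into the Liouvillian generators $b$. Making this precise — showing that the torus and Weyl normalization is algebraic over $\field\langle \boldsymbol{a} \rangle$, that the required $\bv$ can be chosen there as well, and that both are compatible with the normal-form equations simultaneously for all classical types — is the genuine obstacle, and I expect it to rest on the generic regularity of $A(\boldsymbol{a})$ as an element of $\liealg$. Once that algebraicity is established, the constants and Galois-group statements follow from regularity and linear disjointness as above.
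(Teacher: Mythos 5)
Your proposal has a genuine gap, and it sits exactly where you flag it: the claim in your second step that the transcendental part of the Bruhat factorization of $\mathcal{F}$ can be absorbed into the Liouvillian factor $b$ and the free parameters so that $g=(u\,\lweyl\,b)\mathcal{F}^{-1}$ has entries \emph{algebraic} over $\field\langle\boldsymbol{a}\rangle$. That claim is not a technical residue but the entire theorem, and it is very likely false for rank $\geq 2$. Before any torus normalization one must kill the coefficients of all non-simple positive roots, and as in Section~\ref{sec:res_weyl_diff_systems} this amounts to solving the system $\system_w$ of $m-l$ nonlinear Riccati-type first-order equations $f^+_{w,l+1}=\dots=f^+_{w,m}=0$ in the unipotent parameters $\bb_w$. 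The $\SL_2$ footnote you cite as evidence that only algebraic data is forced is precisely the degenerate case $m=l$, where $\Psi=\emptyset$ and no such equations occur, so that only the algebraic torus step (a root extraction) remains. For higher rank there is no reason these equations admit solutions algebraic over $\field\langle\boldsymbol{a}\rangle$; indeed the paper's own $\SL_3$ analysis in Section~\ref{sec:specializing} proves, via the auxiliary systems $\Sigma_m$ of Lemma~\ref{lem:maxdiffideal}, that the relevant Riccati equation generates a \emph{maximal} differential ideal, which in particular excludes solutions algebraic of any degree $m$ over the field considered there --- evidence pointing directly against your algebraicity claim. Accordingly, the paper does not construct an algebraic extension at all: it adjoins $m-l$ new transcendental ``integrals'' by forming $\resfield=\Frac(\field\{\bap,\bam,\baz,\bb_w\}/\ideal_w)$ for a \emph{resolving} Weyl element $w$ (primality of $\ideal_w$ and simplicity of $\system_w$, Lemma~\ref{lem:prime} and Theorem~\ref{thm:Sw_simple}, are what make this quotient usable), an extension of transcendence degree $m-l$ over $\field\langle\boldsymbol{a}\rangle$, followed only then by a finite algebraic extension for the torus normalization of Lemma~\ref{TransStep2}.

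Everything downstream of your algebraicity claim collapses with it. Your constants argument (``algebraic over a field whose constants are the algebraically closed $\field$'') and your Galois argument (regularity of $\juanext/\field\langle\boldsymbol{a}\rangle$ plus linear disjointness from an algebraic extension) are valid \emph{only} in the algebraic case; since the actual $\algext$ is transcendental over $\field\langle\boldsymbol{a}\rangle$, both statements require proof by entirely different means. The paper shows that the constants of $\resfield$ are $\field$ via the differential algebraic independence of $\bbi_w,\bazi,\bapi,\bamil$, resting on the intersection statement \eqref{eq:no_si} (Lemmas~\ref{DiffIndep1} and~\ref{lem:fieldofconstants}), and it preserves the Galois group not by disjointness but by tracking through the three gauge steps (Lemmas~\ref{TransStep1}--\ref{TransStep3}) a subfield $\algext_2$ with $\algext=\algext_2\langle\bt\rangle$ and $\bt$ differentially algebraically independent over $\algext_2$, and then invoking the genuinely new Theorem~\ref{thm:difftransfullgroup}: over \emph{any} differential base field $\difffield$ with constants $\field$, the normal form $A_{\group}(\bt)$ with differentially indeterminate coefficients has Galois group $\group(\field)$. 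That theorem occupies all of Section~\ref{sec:NF_Galois} (re-running the construction of \cite{Seiss} over $\difffield$, Kovacic's criterion for the Borel part, and a transcendence-degree count), and your proposal contains no counterpart to it. In short: the route you sketch would work only if the systems $\system_w$ had solutions algebraic over $\field\langle\boldsymbol{a}\rangle$, which is unproven, contrary to the paper's evidence, and is avoided by the paper precisely through the transcendental construction of $\resfield$.
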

\begin{figure}[h]
\centering
\begin{tikzpicture}[thick,scale=0.75]
\node[circle, draw, fill=black, inner sep=0pt, minimum width=4pt] (BB) at (5,1) {};
\node[circle, draw, fill=black, inner sep=0pt, minimum width=4pt] (EE) at (5,3) {};
\node[circle, draw, fill=black, inner sep=0pt, minimum width=4pt] (A) at (0,2) {};
\node[circle, draw, fill=black, inner sep=0pt, minimum width=4pt] (AA) at (1,3) {};
\node[circle, draw, fill=black, inner sep=0pt, minimum width=4pt] (C) at (0,0) {};
\node[circle, draw, fill=black, inner sep=0pt, minimum width=4pt] (CC) at (1,1) {};
\node (D) at (0,1) {};
\node (DD) at (5,2) {};
\node (E) at (1,2) {};
\draw (A) -- (C);
\draw (AA) -- (CC);
\draw (BB) -- (EE);
\draw (A) -- (AA);
\draw (C) -- (CC);
\node (LA) [above left=0.05em of A] {$\juanext$};
\node (LC) [below left=0.05em of C] {$\field\langle \boldsymbol{a} \rangle$};
\node (RAA) [above=0.25em of AA] {$\juanext \cdot \algext$};
\node (RCC) [below=0.25em of CC] {$\algext$};
\node (REE) [above=0.25em of EE] {$\generalext$};
\node (RBB) [below=0.25em of BB] {$\field\langle \bs \rangle$};
\node (LD) [left=0.5em of D] {$\group(\field)$};
\node (RE) [right=0.5em of E] {$\group(\field)$};
\node (RDD) [right=0.5em of DD] {$\group(\field)$};
\draw [decorate,
    decoration = {brace,mirror}] (-0.2,1.8) --  (-0.2,0.2);
\draw [decorate,
    decoration = {brace}] (1.2,2.8) --  (1.2,1.2);  
\draw [decorate,
    decoration = {brace}] (5.2,2.8) --  (5.2,1.2);     
\draw[->, dashed] (4.5,1) to (1.5,1);  
\draw[->, dashed] (4.5,3) to (1.5,3);  
\end{tikzpicture}
\caption{Relation between extensions $\generalext/\field\langle \bs \rangle$ and $\juanext \cdot \algext/\algext$ via specialization}\label{fig:diagram}
\end{figure}
According to the theorem we obtain, after translating the generic extension $\juanext/\field\langle \boldsymbol{a} \rangle$ by $\algext$, the Picard-Vessiot extension $\juanext \cdot \algext/\algext$ as 
a specialization of the general extension $\generalext/\field\langle \bs \rangle$. But this does not imply that the genericity properties of $\juanext/\field\langle \boldsymbol{a} \rangle$ carry over to $\generalext / \field\langle \bs \rangle$.
A reason for this is that, in contrast to $\field\langle \boldsymbol{a} \rangle$, the extension $\algext$ is not purely differentially transcendental over $\field$ and so specializations of the extension $\juanext \cdot \algext / \algext$ need to respect the differential algebraic relations of the generators of $\algext$.
We address how one may overcome this problem in case of a specialization $\extfield / \difffield$ of $\juanext/\field\langle \boldsymbol{a} \rangle$ with differential Galois group $\group(\field)$.
We present in Theorem~\ref{thm:specialization} a sufficient condition on a defining matrix in $\liealg(\difffield)$ ensuring that there exists a differential extension field $\nongenext$ of $\difffield$ with field of constants $\field$ such that
$E \cdot \nongenext / \nongenext$ is a specialization of $\generalext / \field\langle \bs \rangle$ with the same differential Galois group.
In case of $\SL_3$ we show how one can guarantee that this condition is satisfied.

More recently, Man Cheung Tsui has shown in \cite{Tsui} that the differential transcendence degree of a generic Picard-Vessiot extension  for $\group$ with constants $C$ is bounded below by the essential dimension of any Picard-Vessiot  extension with differential Galois group $\group$
and the same field of constants. 
More precisely, let $E/F$ and  $E_0/F_0$ be Picard-Vessiot extensions with differential Galois group $\group(\field)$ such that $E_0\subset E$ and $F_0\subset F$. If $F\cdot E_0 =E$, then $F_0$ is called a differential field of definition for $E/F$. If $A$ is a defining matrix for $E/F$, then the differential field generated over $\field$ by the entries of any gauge equivalent matrix is a differential field of definition of $E/F$.
The essential dimension is now defined as the smallest differential transcendence degree over $\field$ of a field of definition $F_0 \subseteq F$ of the extension. Enlarging $F$ without changing the differential Galois group might have the effect that the essential dimension drops. 
Our concept of genericity features this possibility, leaving it open whether $\generalext/\field\langle \bs \rangle$ is generic in our sense, 
while Man Cheung Tsui's results show that, as expected, $\generalext/\field\langle \bs \rangle $ is not generic in the sense of \cite[Def.~5.1]{Tsui}.

After recalling the structure of the classical groups and fixing notation in Section~\ref{sec:classicalgroups}, basic properties of the gauge transformation are summarized in Section~\ref{sec:gauge_equiv}.
We investigate the possibility of gauge transformation to normal form by using differential elimination in Section~\ref{sec:res_weyl_diff_systems}. For that reason we recall the fundamental notion of Thomas decomposition in Section~\ref{sec:thomas}.
The construction of the field $\algext$ which allows gauge transformation to normal form is carried out in Section~\ref{sec:NF_generic} and the existence of an intermediate differential field $\field\langle \boldsymbol{a} \rangle \subset \algext_2 \subset \algext$ is proved such that $\algext$ is generated over $\algext_2$ by the coefficients $\bt$ of the normal form, which are differentially algebraically independent over $\algext_2$.
In Section~\ref{sec:NF_Galois} we prove that, even after extending the coefficient field from $\field\langle \bv \rangle$ to $\difffield\langle \bv \rangle$ in the construction of \cite{Seiss}, the differential field extension defined by the normal form has the expected Galois group, which completes the proof of Theorem~\ref{thm:introduction}.
In the final section we generalize the earlier discussion of purely differentially transcendental extensions to more general extensions by introducing a condition ensuring that the analogously constructed differential field extension has the correct subfield of constants and the correct Galois group. We also demonstrate how this condition is met in the case $\group = \SL_3$.

\section{The Structure of the Classical Groups}\label{sec:classicalgroups}
Let $\group \subset \GL_n(C)$ be one of the classical groups of Lie rank $l$.  
We denote by
$\roots$ the root system of the corresponding type of $\group$ and 
by $\rootbasis=\{ \alpha_1,\dots,\alpha_l \}$
a basis of $\roots$ with simple roots $\alpha_i$. Each root in $\roots$
can be written as a $\Z$-linear combination of the 
simple roots with all coefficients negative or positive. This 
yields a disjoint decomposition of $\roots$ into a set of positive and 
negative roots, i.e.
\begin{equation*}
\roots \, = \, \roots^+ \cup \roots^-.
\end{equation*}
For a root $\alpha \in \roots$ we write $\height(\alpha)$ for the height of $\alpha$, 
i.e.~the sum of all coefficients in the $\Z$-linear combination with 
respect of $\rootbasis$. 
Further denote by $m$ the cardinalities of $\roots^+$ and $\roots^-$. 
We enumerate the positive roots as $\alpha_1$, \ldots, $\alpha_l$, $\alpha_{l+1}$, \ldots, $\alpha_m$
in such a way that $\alpha_1$, \ldots, $\alpha_l$ are the simple roots introduced above and
$\height(\alpha_r) \leq \height(\alpha_s)$ for all $r \le s$.

We denote by $\weyl$ the Weyl group of $\roots$. It is generated by the $l$ 
simple reflections $w_{\alpha_i}$ for the simple roots $\alpha_i$
of $\rootbasis$. Every $w \in \weyl$ can be written as a product of 
simple reflections and if this expression is minimal we call 
the number of simple reflections the length $l(w)$ of $w$. The length 
coincides with the number of positive roots $\alpha \in \roots^+$ 
with $w(\alpha)$ negative. There is a unique element $\lweyl$ of $\weyl$ 
of maximal length and this element maps all positive roots to negative
ones and vice versa.
We write $\liealg \subset \gl_n(C)$ for the Lie algebra of $\group$ and we let
$\cartan$ be the Cartan subalgebra of $\liealg$ 
consisting of diagonal matrices. Let 
\begin{equation*}
\liealg \, = \, \cartan \oplus
\bigoplus_{\alpha \in \roots} \liealg_{\alpha}
\end{equation*}
be a Cartan decomposition of $\liealg$, where $\liealg_{\alpha}$
denotes the one-dimensional root space corresponding to the root 
$\alpha$ of $\roots$. In accordance with this decomposition we fix a 
Chevalley basis 
\begin{equation}\label{eq:ChevalleyBasis}
\{ H_i, X_{\alpha} \mid 1\leq i \leq l, \ \alpha \in \roots \},
\end{equation}
where $\cartan=\langle H_1 , \dots , H_l \rangle$ and 
$\liealg_{\alpha}= \langle X_{\alpha} \rangle$. 

We denote by $\torus$ the maximal torus of $\group$ with Lie algebra $\cartan$
and for a root $\alpha \in \roots$ we denote by $\unipotent_{\alpha}$ the 
root group whose Lie algebra is the root space $\liealg_{\alpha}$. Let
\begin{equation*}
\exp\colon \liealg_{\alpha}  \rightarrow \unipotent_{\alpha}, 
\ X_{\alpha} \mapsto \sum_{j \geq 0} \frac{1}{j!}  X_{\alpha}^j
\end{equation*}
be the exponential map from the root space $\liealg_{\alpha}$ to the root group $\unipotent_{\alpha}$ of $\group$. For $x \in \field$ we write $u_{\alpha}(x)$ for 
$\exp(x X_{\alpha})$.
Let $\unipotent^{+}$ (respectively $\unipotent^{-}$) be the maximal unipotent subgroup whose Lie
algebra $\mathfrak{u}^+$ (respectively $\mathfrak{u}^-$) 
is the direct sum of all root spaces that are indexed by the positive
(respectively negative) roots. Further we denote 
by $\borel^+$ (respectively $\borel^-$) the Borel subgroup of $\group$ 
with Lie algebra $\mathfrak{b}^+= \cartan \oplus \mathfrak{u}^+$
(respectively $\mathfrak{b}^-=\cartan \oplus \mathfrak{u}^-$).
Clearly we have $\torus < \borel^+$ and $\borel^{+} =\torus \unipotent^{+}$ 
(respectively $\torus < \borel^-$ and $\borel^{-} = \torus \unipotent^{-}$). 
Moreover, 
to shorten notation we will omit the plus sign in 
the notation of the subgroups and algebras,
i.e., we will write $\borel$ for $\borel^+$, etc. 

The following theorems provide a normal form for elements of $\group$ parametrized by $\borel$ and $\weyl$   
(cf.\ \cite[28.3 Theorem and 28.4 Theorem]{HumGroups}).  
\begin{theorem}[Bruhat decomposition] We have $\group= \bigcup_{w \in \weyl} \borel w \borel$ (disjoint union) with
 $\borel w \borel = \borel \widetilde{w} \borel$ if and only if $w = \widetilde{w}$ in $\weyl$.
\end{theorem}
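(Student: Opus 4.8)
The plan is to recognise the pair $(\borel, N)$, where $N := N_{\group}(\torus)$ is the normalizer of the maximal torus, as a Tits system (BN-pair), and to deduce the Bruhat decomposition as a purely group-theoretic consequence of its axioms. Note first that $\borel w \borel$ is well-defined independently of the chosen representative $n_w \in N$ of $w \in \weyl = N/\torus$: two representatives differ by an element of $\torus = \borel \cap N \subseteq \borel$, which is absorbed on either side. The distinguished generating set of $\weyl$ is the set $S = \{ w_{\alpha_1}, \dots, w_{\alpha_l} \}$ of simple reflections attached to $\rootbasis$. The axioms to be checked are: $\group = \langle \borel, N \rangle$; the subgroup $\torus = \borel \cap N$ is normal in $N$ with quotient the Coxeter group $\weyl$ on $S$; for simple $s$ the conjugate $n_s \borel n_s^{-1} \neq \borel$; and the mixed relation
\[
\borel\, s\, \borel \cdot \borel\, w\, \borel \;\subseteq\; \borel\, sw\, \borel \,\cup\, \borel\, w\, \borel \qquad (s \in S,\ w \in \weyl).
\]

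First I would dispatch the easy axioms. Generation holds because $\group$ is generated by $\torus$ and the root groups $\unipotent_{\alpha}$ ($\alpha \in \roots$): the positive root groups together with $\torus$ generate $\borel$, while each $n_{w_{\alpha_i}}$ conjugates $\unipotent_{\alpha_i}$ into $\unipotent_{-\alpha_i}$, so $\langle \borel, N \rangle$ already contains all negative root groups. That $\torus$ is normal in $N$ with quotient $\weyl$ generated by the involutions $S$ is part of the structure theory recalled in this section. The axiom $n_s \borel n_s^{-1} \neq \borel$ for $s = w_{\alpha_i}$ holds because conjugation by $n_s$ carries $\unipotent_{\alpha_i} \subseteq \borel$ to $\unipotent_{-\alpha_i} \not\subseteq \borel$.

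Granting all four axioms, both assertions of the theorem follow formally. For the covering, let $\Omega := \bigcup_{w \in \weyl} \borel w \borel$. It contains $\borel$ (take $w = e$), is visibly stable under left multiplication by $\borel$, and by the mixed relation is stable under left multiplication by each representative $n_s$, $s \in S$. As $\borel$ and the $n_s$ generate $\group$ and $\Omega \ni e$, applying arbitrary words in these generators to $e$ gives $\group \subseteq \Omega$, hence equality. For disjointness one first sharpens the mixed relation to the product rule $\borel s \borel \cdot \borel w \borel = \borel sw \borel$ if $l(sw) = l(w)+1$ and $= \borel w \borel \cup \borel sw \borel$ if $l(sw) = l(w)-1$ (the axiom $n_s \borel n_s^{-1} \neq \borel$ ensuring the two cosets are distinct), together with $\borel s \borel \cdot \borel s \borel = \borel \cup \borel s \borel$. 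An induction on $l(w)$ then reduces $\borel w \borel = \borel \widetilde w \borel$ to the analogous equality for shorter elements; the base case $l(w) = 0$ gives $\borel w \borel = \borel$, forcing $\widetilde w \in \borel \cap N = \torus$ and hence $\widetilde w = e$.

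The main obstacle is the verification of the mixed relation. It is carried out one simple reflection $s = w_{\alpha_i}$ at a time and reduces to a rank-one computation. Using $\borel = \torus \unipotent$ and the factorization $\unipotent = \unipotent_{\alpha_i} \cdot \prod_{\alpha \in \roots^+ \setminus \{\alpha_i\}} \unipotent_{\alpha}$ — where, crucially, $s$ permutes $\roots^+ \setminus \{\alpha_i\}$, so that conjugation by $n_s$ preserves the second factor inside $\borel$ and carries $\unipotent_{\alpha_i}$ to $\unipotent_{-\alpha_i}$ — the product $n_s \borel n_w$ is brought into the form $\unipotent_{-\alpha_i}\, \borel\, n_{sw}$ with $n_{sw} := n_s n_w$. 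The analysis then hinges on the $\SL_2$-identity that every nontrivial element of $\unipotent_{-\alpha_i}$ lies in the big cell $\borel\, n_s\, \borel$ (Gaussian elimination in $\langle \unipotent_{\alpha_i}, \unipotent_{-\alpha_i} \rangle$), which, after careful bookkeeping with the Chevalley commutation relations for \eqref{eq:ChevalleyBasis}, confines the product to the two double cosets $\borel\, w\, \borel$ and $\borel\, sw\, \borel$. Packaging this bookkeeping through the root-group decomposition $\unipotent = \Uw \cdot (\unipotent \cap n_w \unipotent\, n_w^{-1})$, with $\Uw = \prod_{\alpha \in \roots^+,\ w^{-1}(\alpha) \in \roots^-} \unipotent_{\alpha}$, simultaneously delivers the sharp normal form $\borel\, w\, \borel = \Uw\, n_w\, \borel$ together with bijectivity of the multiplication map $\Uw \times \borel \to \borel\, w\, \borel$; this is the structural heart underlying both the mixed relation and the uniqueness of $w$.
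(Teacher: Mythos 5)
The paper itself gives no proof of this theorem: it is quoted verbatim from \cite[28.3 Theorem and 28.4 Theorem]{HumGroups}, and the argument given there is in substance exactly the one you outline --- verify the Tits-system (BN-pair) axioms for $(\borel, N_\group(\torus))$, with the mixed relation $\borel s \borel \cdot \borel w \borel \subseteq \borel sw \borel \cup \borel w \borel$ established by a rank-one computation, then deduce covering and disjointness formally, sharpening uniqueness via the decomposition $\unipotent = \unipotent'_w \cdot \bigl(\unipotent \cap n_w \unipotent n_w^{-1}\bigr)$ that also underlies Theorem~\ref{Bruhat1}. So your route coincides with the cited classical proof, and the structural inputs you invoke (generation of $\group$ by $\torus$ and the root groups, the fact that $s = w_{\alpha_i}$ permutes $\roots^+ \setminus \{\alpha_i\}$, the big-cell identity in $\langle \unipotent_{\alpha_i}, \unipotent_{-\alpha_i} \rangle$, and the formal derivation of covering and of disjointness by induction on length) are all correct.

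One concrete caveat on the only hard step: as literally described, your verification of the mixed relation is circular. From $n_s \borel n_w \subseteq \unipotent_{-\alpha_i} \borel n_{sw}$ together with the big-cell factorization $\unipotent_{-\alpha_i} \setminus \{1\} \subseteq \unipotent_{\alpha_i} \torus\, n_s\, \unipotent_{\alpha_i}$, the nontrivial part produces a term of the form $\unipotent_{\alpha_i} \torus\, n_s \borel n_{sw}$, i.e.\ a product $n_s \borel n_{w''}$ of the very shape you started from; iterating the same reduction just oscillates between $w$ and $sw$ and never terminates. The standard fix is the case distinction on the sign of $w^{-1}(\alpha_i)$ (equivalently, on whether $l(sw) > l(w)$). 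If $w^{-1}(\alpha_i) \in \roots^+$, write $\borel = \unipotent'_s\, \unipotent_{\alpha_i} \torus$ with $\unipotent'_s := \prod_{\alpha \in \roots^+ \setminus \{\alpha_i\}} \unipotent_{\alpha}$; then $n_w^{-1} \unipotent_{\alpha_i} n_w = \unipotent_{w^{-1}(\alpha_i)} \subseteq \borel$ gives $n_s \borel n_w \subseteq \unipotent\, n_{sw} \borel \subseteq \borel n_{sw} \borel$ outright, with $\unipotent_{-\alpha_i}$ never appearing. If $w^{-1}(\alpha_i) \in \roots^-$, set $w' = sw$ (so $w'^{-1}(\alpha_i) \in \roots^+$), write $n_w = n_s n_{w'} t_0$ with $t_0 \in \torus$, and use $n_s \borel n_s \subseteq \borel \cup \borel n_s \borel$ --- this is where the $\SL_2$ identity is actually consumed, and it causes no regress because no $n_w$ stands to the right --- together with the first case applied to $w'$, to land in $\borel n_{sw} \borel \cup \borel n_w \borel$. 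With that case split inserted, your proof is complete and agrees with the source the paper cites.
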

For each $w \in \weyl$ fix a coset representative $n(w)$ in the normalizer of $\torus$
 in $\group$.
\begin{theorem}\label{Bruhat1}
Each element $g \in \group$ can be written in the form 
 \begin{equation*}
 g \, = \, u' \, n(w) \, t \, u\,,
 \end{equation*}
 where $w \in \weyl$, $t \in \torus$, $u  \in \unipotent$ and $u' \in \unipotent'_{w}=\unipotent \cap n(w) \unipotent^{-} n(w)^{-1}$
 are all determined uniquely by $g$. 
\end{theorem}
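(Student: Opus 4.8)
The plan is to derive the refined decomposition from the Bruhat decomposition stated above together with the product structure of $\unipotent$. The crucial input is that $n(w)$ conjugates each root group $\unipotent_\alpha$ to $\unipotent_{w(\alpha)}$, so that $\unipotent$ factors as a product of varieties $\unipotent = \unipotent'_w \cdot \Uw$, where $\Uw := \unipotent \cap n(w)\unipotent n(w)^{-1}$ collects the root groups $\unipotent_\beta$ with $\beta \in \roots^+$ and $w^{-1}(\beta) \in \roots^+$, while $\unipotent'_w$ collects those with $w^{-1}(\beta) \in \roots^-$. Both are subgroups, being intersections of subgroups, and by construction $n(w)^{-1}\Uw\, n(w) \subseteq \unipotent$ and $n(w)^{-1}\unipotent'_w\, n(w) \subseteq \unipotent^{-}$; these two containments drive the existence and the uniqueness arguments respectively.

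For existence I would begin with the Bruhat decomposition, which places $g$ in a single double coset and so gives $g = b_1\, n(w)\, b_2$ with $b_1, b_2 \in \borel$. Writing $b_1 = u_1 t_1$ with $u_1 \in \unipotent$ and $t_1 \in \torus$ (using $\borel = \unipotent\torus$) and splitting $u_1 = u'\,u''$ along $\unipotent = \unipotent'_w \cdot \Uw$, I would push $u'' \in \Uw$ to the right through $t_1\, n(w)$: conjugation by $t_1$ keeps it in $\Uw$ because $\torus$ normalises every root group, and the subsequent conjugation by $n(w)$ lands it in $\unipotent$ since $n(w)^{-1}\Uw\, n(w) \subseteq \unipotent$. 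Using that $n(w)$ normalises $\torus$ and that $\torus$ normalises $\unipotent$, I would then collect all torus factors into a single $t \in \torus$ placed directly after $n(w)$ and all unipotent factors into a single $u \in \unipotent$ on the right, leaving $u' \in \unipotent'_w$ undisturbed on the left; this produces $g = u'\, n(w)\, t\, u$.

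For uniqueness, suppose $u'_1 n(w_1) t_1 u_1 = u'_2 n(w_2) t_2 u_2$ are two such expressions. Since $u'_i\, n(w_i)\, t_i\, u_i \in \borel\, w_i\, \borel$, the disjointness in the Bruhat decomposition forces $w_1 = w_2 =: w$. Cancelling the $u'_2$ factor and rearranging yields $n(w)^{-1} u'\, n(w) = b$ with $u' := (u'_2)^{-1}u'_1 \in \unipotent'_w$ and $b := t_2 u_2 u_1^{-1} t_1^{-1} \in \borel$. By the containment noted above the left-hand side lies in $\unipotent^{-}$, hence $b \in \unipotent^{-} \cap \borel$. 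This intersection is trivial: $\unipotent^{-} \subseteq \borel^{-}$ gives $\unipotent^{-} \cap \borel \subseteq \borel^{-} \cap \borel = \torus$, and since every element of $\unipotent^{-}$ is unipotent while every element of $\torus$ is semisimple, any common element is the identity. Thus $b = 1$ and $u' = 1$, so $u'_1 = u'_2$; substituting $b = 1$ back and splitting it via the analogous triviality $\torus \cap \unipotent = \{1\}$ then gives $t_1 = t_2$ and $u_1 = u_2$.

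The genuine obstacle is the uniqueness, and within it the structural facts on which it rests: the containment $n(w)^{-1}\unipotent'_w\, n(w) \subseteq \unipotent^{-}$ and the triviality of the intersections $\unipotent^{-} \cap \borel$ and $\torus \cap \unipotent$. All of these follow from the root-space decomposition of $\unipotent$ and from $n(w)$ permuting the root groups according to $w$, so the real work is in making the factorisation $\unipotent = \unipotent'_w \cdot \Uw$ and its behaviour under conjugation by $n(w)$ precise; once that is in hand, the existence part is a purely formal rearrangement.
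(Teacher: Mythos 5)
The paper gives no proof of this theorem at all — it is quoted from Humphreys (28.3 and 28.4 in \emph{Linear Algebraic Groups}) — and your argument is correct and is essentially the standard one found there: the $T$-stable factorisation $\unipotent = \unipotent'_w \cdot \Uw$ with $\Uw = \unipotent \cap n(w)\,\unipotent\, n(w)^{-1}$, existence by pushing the $\Uw$-factor rightwards through $t_1$ and $n(w)$, and uniqueness from Bruhat disjointness together with $n(w)^{-1}\unipotent'_w\, n(w) \subseteq \unipotent^-$ and the trivial intersections $\unipotent^- \cap \borel = \torus \cap \unipotent = \{1\}$. Your one genuine debt, which you correctly flag, is the fact that $U$ is directly spanned in any order by its root subgroups (so that the two $T$-stable pieces indeed multiply out to all of $\unipotent$), which is exactly the input the cited source also relies on.
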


In the following we will consider the group $\group$ over different fields.
For a field $K$ of characteristic zero, $\group(K)$ is the group of 
$K$-rational points.

\section{Gauge Equivalence}
\label{sec:gauge_equiv}
For $g \in \group \subset \GL_n$ we denote by $\Ad(g)$ the automorphism
\begin{equation*}
 \Ad(g): \liealg \rightarrow \liealg, \ X \mapsto g X g^{-1} , 
\end{equation*}   
i.e., $\Ad$ is the adjoint action of $\group$ on its Lie algebra $\liealg$. Moreover, for a differential field $\difffield$ with derivation $\partial_{\difffield}$ the logarithmic derivative $\dlog$ is defined by
 \begin{equation*}
 \dlog\colon \GL_n(\difffield) \rightarrow \gl_n(\difffield), \ g \mapsto \partial_{\difffield}(g) g^{-1}.
 \end{equation*}
Two matrices $A_1$ and $A_2$ in $\liealg(\difffield)$ are \emph{gauge equivalent} over $\difffield$ by $g \in \group(\difffield)$ if
\begin{equation*}
\gauge{g}{A_1} := \Ad(g)(A_1) + \dlog(g) = A_2.
\end{equation*}
The action of $G(\difffield)$ on $\liealg(\difffield)$ by gauge transformation partitions $\liealg(\difffield)$ into orbits depending on the differential field $\difffield$.
We will use the following two remarks to describe the images of the adjoint action and the logarithmic derivative, respectively. Together they enable us to 
connect gauge transformation of an element of $\liealg$ by a root group element with the root structure. 
\begin{remark}\label{remark3}
  For linearly independent $\alpha$, $\beta \in \roots$ and $r, q \in \N$ let
 $\alpha - r \beta, \dots ,\alpha + q \beta$ be the $\beta$-string through $\alpha$ and let 
 $\langle \alpha, \beta \rangle$ be the Cartan integer. Then, with respect to the Chevalley basis defined in \eqref{eq:ChevalleyBasis}, we have \cite[Section~4.3]{Carter}
  \begin{eqnarray*}
   \mathrm{Ad}(u_{\beta}(x))(X_{\alpha}) & =& \sum\nolimits_{i=0}^q c_{\beta, \alpha,i} x^i X_{\alpha + i \beta}, \\
   \mathrm{Ad}(u_{\beta}(x))(H_{\alpha}) &=& H_{\alpha} - \langle \alpha, \beta \rangle  X_{\beta} , \\
 \mathrm{Ad}(u_{\beta}(x))(X_{-\beta}) &=& X_{-\beta} + x H_{\beta} - x^2  X_{\beta},
\end{eqnarray*}
where $c_{\beta,\alpha,0}=1$ and $c_{\beta,\alpha,i}= \pm \binom{r+i}{i}$.
 \end{remark}

\begin{remark}\label{remark4}
 Let $\group \subset \GL_n$ be a linear algebraic group. Then the restriction of $\dlog$ to $\group$ maps
 $\group(\difffield)$ to its Lie algebra $\liealg(\difffield)$, i.e., we have
 \begin{equation*}
  \dlog|_{\group}: \group(\difffield) \rightarrow \liealg(\difffield).
 \end{equation*}
\end{remark}
 A proof can be found in \cite{KovacicInvProb}.

\section{Thomas Decomposition of Differential Systems}\label{sec:thomas}

In this section we recall the notion of Thomas decomposition of
differential systems to the extent it will be used in later sections.
For more details we refer to, e.g., \cite{RobertzHabil}.

The Bruhat decomposition of classical groups plays a crucial role in our work.
We shall split the task of deciding gauge equivalence to a matrix in normal form into checking consistency of differential systems associated to the Bruhat cells of $\group$ (cf.\ Section~\ref{sec:res_weyl_diff_systems} for the definition of these differential systems $\system_w$ and Definition~\ref{de:nf} for the definition of normal form).
A different application of Thomas decomposition to the Bruhat decomposition of the general linear group was presented in \cite{PleskenThomasBruhat}, where the Bruhat cells are shown to be described by simple algebraic systems.

Let $\difffield$ be a differential field of characteristic zero
and $\diffpolyring = \difffield\{ u_1, \ldots, u_{\ndvar} \}$
the differential polynomial ring over $\difffield$ in the
differential indeterminates $u_1$, \ldots, $u_{\ndvar}$. Differential polynomials
in $u_1$, \ldots, $u_{\ndvar}$ are polynomials in the algebraically independent derivatives
\[
\{ \partial^j u_i \mid i \in \{ 1, \ldots, \ndvar \}, \, j \in \Z_{\ge 0} \},
\]
with coefficients in $\difffield$, where the derivation $\partial$ on $\diffpolyring$ extends the derivation on $\difffield$.
We endow $\diffpolyring$ with a \emph{ranking} $>$, i.e.\ a total ordering $>$ on the above set
such that $\partial^j u_i > u_i$ for all $i$ and all $j \ge 1$, and for all $j_1$, $j_2 \in \Z_{\ge 0}$, the relation
$\partial^{j_1} u_{i_1} > \partial^{j_2} u_{i_2}$ implies $\partial^{j_1 + k} u_{i_1} > \partial^{j_2 + k} u_{i_2}$
for all $k \ge 0$.
A ranking $>$ is said to be \emph{orderly} if $j_1 > j_2$ implies $\partial^{j_1} u_{i_1} > \partial^{j_2} u_{i_2}$ for all $i_1$, $i_2 \in \{ 1, \ldots, n \}$.

\begin{definition}\label{de:differentialsystem}
A \emph{differential system} $\system$, defined over $\diffpolyring = \difffield\{ u_1, \ldots, u_{\ndvar} \}$, is
given by finitely many equations and inequations
\begin{equation}\label{eq:diffsys}
p_1 = 0, \quad p_2 = 0, \quad \ldots, \quad p_s = 0, \quad
q_1 \neq 0, \quad q_2 \neq 0, \quad \ldots, \quad q_t \neq 0,
\end{equation}
where $p_1$, \ldots, $p_s$, $q_1$, \ldots, $q_t \in \diffpolyring$ and
$s$, $t \in \Z_{\ge 0}$.
For a given open subset $\Omega$ of $\C$ the \emph{solution set} of $\system$ is
\begin{eqnarray*}
\Sol_{\Omega}(\system) := \{ \, f = (f_1, \ldots, f_{\ndvar}) & | & f_k\colon \Omega \to \C \mbox{ analytic}, \, k = 1, \ldots, 
\ndvar, \\[0.2em]
& & p_i(f) = 0, \, q_j(f) \neq 0, \, i = 1, \ldots, s, \, j = 1, \ldots, t \, \}.
\end{eqnarray*}
\end{definition}

For each differential polynomial $p \in \diffpolyring \setminus \difffield$ the \emph{leader} of $p$
is the derivative $\partial^j u_i$ that is ranked highest with respect to $>$ among those
occurring in $p$. Differential polynomials are considered recursively as polynomials in their leaders with
coefficients that are differential polynomials in lower ranked derivatives, etc.
The \emph{initial} of $p$ is the leading coefficient of $p$ as a polynomial in its leader.
The \emph{separant} of $p$ is the partial derivative of $p$ with respect to its leader.

By using the above concepts, besides the structure of $\diffpolyring$ as differential ring,
pseudo-reduction of one differential polynomial by another can be defined,
if the leaders of both differential polynomials involve the same differential indeterminate,
which results in a pseudo-remainder (cf., e.g., \cite{RobertzHabil} for details).

\begin{definition}\label{diffsimple}
A differential system $\system$, defined over $\diffpolyring$, as in \eqref{eq:diffsys}
is said to be \emph{simple}
(with respect to $>$) if the following three conditions hold.
\begin{enumerate}
\item \label{diffsimple1}
The system $\system$ is simple as an algebraic system. More precisely,
we consider the finitely many derivatives $\partial^j u_i$ which occur in the equations
and inequations of $\system$ as coordinates of an affine space $\mathbb{A}$ over
an algebraic closure of $\difffield$,
totally ordered by $>$. Then we require that:
\begin{enumerate}
    \item $p_i \not\in \difffield$ for all $i$ and $q_j \not\in \difffield$ for all $j$;
    \item the leaders of $p_1$, \ldots, $p_s$, $q_1$, \ldots, $q_t$ are pairwise distinct;
    \item no initial or separant of $p_1$, \ldots, $p_s$, $q_1$, \ldots, $q_t$ has
    a zero in the algebraic set defined by $p_1 = 0$, \ldots, $p_s = 0$, $q_1 \neq 0$, \ldots, $q_t \neq 0$
    in $\mathbb{A}$.
\end{enumerate}
\item \label{diffsimple2}
No pseudo-reduction of any $p_i$ modulo $p_1$, \ldots, $p_{i-1}$, $p_{i+1}$, \ldots, $p_s$
and their derivatives is possible.
\item \label{diffsimple3}
The left hand sides of the inequations $q_1 \neq 0$, \ldots, $q_t \neq 0$
equal their pseu\-do-re\-main\-ders modulo $p_1$, \ldots, $p_s$
and their derivatives.
\end{enumerate}
\end{definition}

\begin{definition}\label{de:ThomasDecomposition}
Let $\system$ be a differential system, defined over $\diffpolyring$.
A \emph{Thomas decomposition} of $\system$ (or of $\Sol_{\Omega}(S)$)
with respect to $>$
is a collection of finitely many simple differential systems
$\system_1$, \ldots, $\system_r$, defined over $\diffpolyring$, such that
$\Sol_{\Omega}(\system)$ is the disjoint union of the solution sets
$\Sol_{\Omega}(\system_1)$, \ldots, $\Sol_{\Omega}(\system_r)$.
\end{definition}

If the differential system $\system$ is defined over a computable differential ring,
then the construction of a Thomas decomposition of $\system$ is algorithmic.
Then membership to the radical differential ideal generated by a simple
differential system can be decided effectively.

\begin{proposition}[\cite{RobertzHabil}, Prop.~2.2.50]\label{prop:differentialmembership}
Let $\system$ be a simple differential system, defined over $\diffpolyring$, with
equations $p_1 = 0$, \ldots, $p_s = 0$. Moreover, let $E$ be the
differential ideal of $\diffpolyring$ generated by $p_1$, \ldots, $p_s$ and
define the product $q$ of the initials and separants of all $p_1$,
\ldots, $p_s$. Then $E : q^{\infty}$ is a radical differential ideal.
Given $p \in \diffpolyring$, we have $p \in E : q^{\infty}$ if and only if the
pseu\-do-re\-main\-der of $p$ modulo $p_1$, \ldots, $p_s$
and their derivatives is zero.
\end{proposition}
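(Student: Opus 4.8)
The plan is to reduce the differential membership problem to a purely algebraic one by exploiting the three defining properties of a simple system, and to treat both assertions — the pseudo-remainder criterion and radicality — with the same machinery. Writing $A = \{p_1, \ldots, p_s\}$ for the set of left hand sides and $E = [A]$ for the differential ideal they generate, the ingredients I would assemble are: (I) a pseudo-reduction remainder formula relating $p$ to its pseudo-remainder; (II) a passage from the differential ideal $E$ to the purely algebraic ideal generated by $A$ alone, valid on partially reduced polynomials; and (III) standard structural facts about $A$ viewed as an algebraic triangular system.

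First I would record the remainder formula. Pseudo-reduction of an arbitrary $p \in \diffpolyring$ modulo $p_1, \ldots, p_s$ and their derivatives proceeds by successively cancelling leaders, each step multiplying by an initial or a separant of some $\theta p_i$. By Definition~\ref{diffsimple} the leaders of the $p_i$ are pairwise distinct and no initial or separant reduces further, so the process terminates in a pseudo-remainder $\tilde p$ that is fully reduced modulo $A$ and yields an identity $q^N p = \sum_{i,\theta} c_{i,\theta}\,\theta p_i + \tilde p$ for some $N \in \Z_{\ge 0}$ and $c_{i,\theta} \in \diffpolyring$, where $q$ is the product of all initials and separants. Equivalently $q^N p \equiv \tilde p \pmod{E}$. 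The ``if'' direction of the criterion is then immediate: if $\tilde p = 0$ then $q^N p \in E$, whence $p \in E:q^{\infty}$.

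The content is the converse. Suppose $p \in E:q^{\infty}$, so $q^M p \in E$ for some $M$; multiplying the remainder formula by $q^M$ and subtracting shows $q^M \tilde p \in E$, i.e.\ $\tilde p \in E:q^{\infty}$, while $\tilde p$ is partially reduced with respect to $A$. Here I would invoke the coherence of $A$, which is exactly what condition~\ref{diffsimple2} of Definition~\ref{diffsimple} guarantees: because no $p_i$ pseudo-reduces modulo the others and their derivatives, all cross-derivative ($\Delta$-)relations reduce to zero, so $A$ is a coherent autoreduced set. Rosenfeld's Lemma (as developed in \cite{RobertzHabil}) then places every partially reduced element of $E:q^{\infty}$ into the \emph{algebraic} saturation $(A):q^{\infty}$, where $(A)$ is generated by the finite set $A$ with no derivatives adjoined. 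The non-vanishing of initials and separants demanded in condition~\ref{diffsimple1} makes $A$ a regular chain, and a polynomial reduced with respect to a regular chain that lies in its saturated ideal must vanish. Hence $\tilde p = 0$, which proves the criterion.

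Finally, radicality follows from the same ingredients once squarefreeness is used. If $p^k \in E:q^{\infty}$, raising the remainder formula to the $k$-th power gives $q^{kN} p^k \equiv \tilde p^{\,k} \pmod E$, and the argument above shows $\tilde p^{\,k} \in E:q^{\infty}$; since products of partially reduced polynomials remain partially reduced, Rosenfeld's Lemma yields $\tilde p^{\,k} \in (A):q^{\infty}$. As condition~\ref{diffsimple1} makes $A$ a \emph{squarefree} regular chain, Lazard's Lemma gives that the algebraic ideal $(A):q^{\infty}$ is radical, so $\tilde p \in (A):q^{\infty}$, and being reduced it is zero; thus $p \in E:q^{\infty}$. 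I expect the genuine obstacle to be step (II): confirming that the combinatorial conditions defining a simple system really encode coherence in the precise form Rosenfeld's Lemma requires, and tracking partial reducedness faithfully through the differential-to-algebraic passage. The algebraic facts in (III) — vanishing of reduced elements of a saturated regular chain and Lazard's Lemma — are standard, and the remainder formula (I) is routine.
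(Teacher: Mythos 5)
The paper does not actually prove this proposition---it is imported verbatim from \cite{RobertzHabil} (Prop.~2.2.50)---and your argument reconstructs essentially the proof strategy of that source: the pseudo-reduction remainder formula, a Rosenfeld-type passage from the differential saturation $E:q^{\infty}$ to the algebraic saturation on partially reduced polynomials, and Lazard-type radicality coming from the squarefreeness that the non-vanishing separants enforce. The one point you flag as the ``genuine obstacle'' is in fact immediate in this paper's setting, which is ordinary differential (a single derivation $\partial$): condition \ref{diffsimple2} of Definition~\ref{diffsimple} forces the leaders of $p_1,\ldots,p_s$ to involve pairwise distinct differential indeterminates, so there are no cross-derivative pairs at all and the coherence hypothesis of Rosenfeld's Lemma holds vacuously.
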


More generally, if the differential system $\system$ is not
necessarily simple, we have the following.

\begin{proposition}[\cite{RobertzHabil}, Prop.~2.2.72]\label{prop:intersection}
Let $\system$ be a (not necessarily simple) differential system
as in \eqref{eq:diffsys}
and $\system_1$, \ldots, $\system_r$ a Thomas decomposition of $\system$ with respect to
any ranking on $\diffpolyring$. Moreover, let $E$ be the differential ideal of $\diffpolyring$
which is generated by $p_1$, \ldots, $p_s$ and define the product $q$ of $q_1$, \ldots, $q_t$.
For $i \in \{ 1, \ldots, r \}$, let $E^{(i)}$ be the differential ideal
of $\diffpolyring$ generated by the equations in $\system_i$ and define the product $q^{(i)}$
of the initials and separants of all these equations. Then we have
\[
\sqrt{E : q^{\infty}} \, = \, \left( E^{(1)} : (q^{(1)})^{\infty} \right) \cap \ldots \cap
\left( E^{(r)} : (q^{(r)})^{\infty} \right).
\]
\end{proposition}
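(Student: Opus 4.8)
My plan is to recognise both sides as radical differential ideals and to compare them through their analytic solution sets, so that the differential Nullstellensatz reduces the claim to a set-theoretic statement which the Thomas decomposition supplies directly. Throughout I write $\Sol_{\Omega}(I)$ for the analytic solution set of an ideal $I$, i.e.\ the set of common analytic zeros on $\Omega$ of all its elements.

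\emph{Reduction to solution sets.} First I would check that both sides are radical differential ideals. The left-hand side $\sqrt{E : q^{\infty}}$ is radical by construction, and one verifies in the usual way that the saturation of a differential ideal by a multiplicatively closed set, as well as its radical, is again a differential ideal. For the right-hand side, Proposition~\ref{prop:differentialmembership} guarantees that each factor $E^{(i)} : (q^{(i)})^{\infty}$ is a \emph{radical} differential ideal; since a finite intersection of radical differential ideals is again radical, the right-hand side is radical as well. Over a suitable open $\Omega \subseteq \C$ the differential Nullstellensatz (resting on Ritt--Raudenbush theory together with Riquier's existence theorem, which provides analytic solutions for the passive, i.e.\ simple, systems involved) gives a bijection between radical differential ideals of $\diffpolyring$ and their analytic solution sets. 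It therefore suffices to prove the equality of solution sets
\[
\Sol_{\Omega}\!\big(\sqrt{E : q^{\infty}}\big) \;=\; \bigcup_{i=1}^{r} \Sol_{\Omega}\!\big(E^{(i)} : (q^{(i)})^{\infty}\big),
\]
using that the solution set of an intersection is the union of the individual solution sets, and that passing to the radical leaves solution sets unchanged.

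\emph{Computing the two sides.} Next I would identify each solution set as a Kolchin closure. Since the analytic solutions of $E$ are exactly those of $p_1 = \cdots = p_s = 0$, and saturating at $q = q_1 \cdots q_t$ removes precisely the locus on which some $q_j$ vanishes, the left-hand side is $\Sol_{\Omega}(E : q^{\infty}) = \overline{\Sol_{\Omega}(\system)}$. For the $i$-th simple system, $\Sol_{\Omega}(E^{(i)} : (q^{(i)})^{\infty})$ is the closure of the quasi-component obtained from the equations of $\system_i$ by removing the vanishing locus of the initials and separants. The crucial point is that this coincides with $\overline{\Sol_{\Omega}(\system_i)}$, the closure of the full simple system, i.e.\ the remaining inequations of $\system_i$ delete no entire component. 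Granting this, the defining property of a Thomas decomposition (Definition~\ref{de:ThomasDecomposition}), $\Sol_{\Omega}(\system) = \bigsqcup_{i} \Sol_{\Omega}(\system_i)$, gives after taking closures (which commute with finite unions)
\[
\overline{\Sol_{\Omega}(\system)} \;=\; \bigcup_{i=1}^{r} \overline{\Sol_{\Omega}(\system_i)} \;=\; \bigcup_{i=1}^{r} \Sol_{\Omega}\!\big(E^{(i)} : (q^{(i)})^{\infty}\big),
\]
which is the desired equality of solution sets; the Nullstellensatz then upgrades it to the stated identity of ideals.

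\emph{Main obstacle.} The delicate step is the density claim $\overline{\Sol_{\Omega}(\system_i)} = \Sol_{\Omega}(E^{(i)} : (q^{(i)})^{\infty})$, namely that the inequations of $\system_i$ do not excise any full component of the variety of the saturated ideal. This is exactly where the hypotheses of Definition~\ref{diffsimple} are indispensable: because the leaders of all equations and inequations are pairwise distinct (condition~(1)(b)) and the inequations are reduced modulo the equations (condition~(3)), the inequations, read modulo the triangular system of equations, become nonzero differential polynomials in the remaining parametric derivatives and hence cut out a nonempty Kolchin-open, dense subset of each component, while condition~(1)(c) keeps the initials and separants nonvanishing there. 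One then combines this with Riquier's existence theorem to conclude that $\Sol_{\Omega}(\system_i)$ is genuinely dense in the variety, not merely nonempty. Without the distinct-leader condition this can fail, since an inequation could single out one root of an equation sharing its leader and thereby delete a component; making the density and existence assertions precise is the heart of the argument, the remaining bookkeeping with saturations, radicals, and finite unions being routine.
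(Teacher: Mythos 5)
Note first that the paper contains no proof of this proposition to compare against: it is quoted verbatim from \cite{RobertzHabil} (Prop.~2.2.72), so your proposal has to stand on its own. Its skeleton --- both sides are radical differential ideals, pass to solution sets, combine $\Sol(\system) = \bigsqcup_i \Sol(\system_i)$ with the claim that $\Sol(\system_i)$ is dense in the variety of $E^{(i)} : (q^{(i)})^{\infty}$ --- is indeed the standard route and essentially the one taken in the cited reference. But as written there are two genuine gaps.

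First, the ``bijection between radical differential ideals and their analytic solution sets'' that you invoke is not a citable black box at this level of generality, and using it here is circular. The proposition is stated for an arbitrary differential field $\difffield$ of characteristic zero and \emph{any} ranking; analytic solutions on $\Omega$ only make sense when $\difffield$ consists of meromorphic functions, and Riquier's existence theorem, which you lean on, requires a Riquier (e.g.\ orderly) ranking and a passive orthonomic system, so ``any ranking'' is not covered. Worse, in the analytic setting the injectivity of $I \mapsto \Sol_{\Omega}(I)$ on radical differential ideals of a \emph{non-simple} system is itself established via Thomas decomposition, i.e.\ via the very identity you are proving. The non-circular argument is asymmetric and is best run with solutions in differential field extensions (a universal extension), where the Ritt--Raudenbush Nullstellensatz holds unconditionally, with no analyticity and no ranking hypotheses: for $\sqrt{E : q^{\infty}} \subseteq \bigcap_i E^{(i)} : (q^{(i)})^{\infty}$ one uses $\Sol(\system_i) \subseteq \Sol(\system)$ together with the Nullstellensatz for the \emph{single simple system} $\system_i$; for the reverse inclusion one takes a solution $\eta$ of $\system$ in an extension field, notes $\eta \in \Sol(\system_j)$ for some $j$ by the decomposition, uses $p \, (q^{(j)})^{M} \in E^{(j)}$ together with $q^{(j)}(\eta) \neq 0$ (condition \ref{diffsimple1} of Definition~\ref{diffsimple}) to get $p(\eta) = 0$, and concludes $pq \in \sqrt{E}$, hence $p \in \sqrt{E} : q^{\infty} = \sqrt{E : q^{\infty}}$, by Ritt--Raudenbush. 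Replacing $\Omega$-analytic solutions by extension-field solutions repairs both defects at once.

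Second, the step you yourself flag --- that the vanishing ideal of $\Sol(\system_i)$ equals $E^{(i)} : (q^{(i)})^{\infty}$, i.e.\ that the inequations of a simple system excise no entire component of the saturation --- is asserted rather than proved, and it is the actual mathematical content beyond Proposition~\ref{prop:differentialmembership}. Your sketch begs the question at the decisive point: that an inequation $g$ of $\system_i$ equals its nonzero pseudo-remainder (conditions \ref{diffsimple1} and \ref{diffsimple3}) only yields $g \notin E^{(i)} : (q^{(i)})^{\infty}$, i.e.\ $g$ avoids the \emph{intersection} of the minimal primes of the saturation; what is needed is that $g$ lies in \emph{no individual} minimal prime, since otherwise $g \neq 0$ deletes a whole component and the vanishing ideal of the remaining solutions is strictly larger than the saturation. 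Ruling this out is exactly the density lemma for simple systems (Rosenfeld's lemma reducing to the algebraic triangular case, plus the fiberwise nonvanishing of initials and separants), and without it neither inclusion of your solution-set identity is secured. So the proposal correctly identifies the crux but does not close it.
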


In what follows, differential systems, defined over $\diffpolyring$, that are simple with respect to a block
ranking $>$ on $\diffpolyring$ will be considered.
More precisely, let $\{ u_1, \ldots, u_{\ndvar} \} = B_1 \cup \ldots \cup B_k$
be a partition of the set of differential indeterminates.
For any $i \ge 1$ let $\difffield\{ B_i, \ldots, B_k \}$ be the differential polynomial ring
over $\difffield$ in the differential indeterminates $B_i \cup \ldots \cup B_k$, considered
as a differential subring of $\diffpolyring = \difffield\{ u_1, \ldots, u_{\ndvar} \}$.
We refer to $>$ as an \emph{elimination ranking with blocks $B_1$, \ldots, $B_k$}
if any derivative involving a differential indeterminate in $B_j$ is ranked higher with respect to $>$
than any derivative involving a differential indeterminate in $B_{j+1} \cup \ldots \cup B_k$.
Then we also write $B_1 \gg B_2 \gg \ldots \gg B_k$.

\begin{proposition}[\cite{RobertzHabil}, Prop.~3.1.36]\label{prop:elimsimple}
Let $\system$ be a differential system as in \eqref{eq:diffsys},
defined over $\diffpolyring$, that is simple with respect to an elimination ranking $>$ as defined above.
Moreover, let $E$ be the differential ideal of $\diffpolyring$ generated by $p_1$, \ldots, $p_s$
and $q$ the product of the initials and separants of all $p_1$, \ldots, $p_s$.
For every $i \in \{ 1, \ldots, k \}$,
let $E_i$ be the differential ideal of $\difffield\{ B_i, \ldots, B_k \}$ generated by
\[
P_i := \{ p_1, \ldots, p_s \} \cap \difffield\{ B_i, \ldots, B_k \}
\]
and let $q_i$ be the product
of the initials and separants of all elements of $P_i$. Then, for every
$i \in \{ 1, \ldots, k \}$, we have
\[
(E : q^{\infty}) \cap \difffield\{ B_i, \ldots, B_k \} = E_i : q_i^{\infty}\,.
\]
\end{proposition}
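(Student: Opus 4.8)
The plan is to prove the asserted equality by establishing the two inclusions separately, exploiting the characterization of membership in $E : q^{\infty}$ by pseudo-reduction from Proposition~\ref{prop:differentialmembership}. First I would record two consequences of the elimination ranking $B_1 \gg \ldots \gg B_k$. Since any derivative of an indeterminate in $B_1 \cup \ldots \cup B_{i-1}$ outranks every derivative of an indeterminate in $B_i \cup \ldots \cup B_k$, and since the leader of a derivative $\partial^m p_j$ lies in the same block as the leader of $p_j$, a differential polynomial $p_j$ lies in $\difffield\{ B_i, \ldots, B_k \}$ if and only if its leader is a derivative of an indeterminate in $B_i \cup \ldots \cup B_k$; thus $P_i$ is exactly the set of those equations whose leaders lie in the lower blocks. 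For each such $p_j$ the leader, and hence all lower ranked derivatives occurring in $p_j$, lie in $B_i \cup \ldots \cup B_k$, so the initial and separant of $p_j$ belong to $\difffield\{ B_i, \ldots, B_k \}$; consequently $q_i \in \difffield\{ B_i, \ldots, B_k \}$ and $E_i : q_i^{\infty}$ is meaningfully formed inside the subring.

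The inclusion $E_i : q_i^{\infty} \subseteq (E : q^{\infty}) \cap \difffield\{ B_i, \ldots, B_k \}$ is straightforward. Let $\widetilde{E_i}$ denote the differential ideal of $\diffpolyring$ generated by $P_i$; since $P_i \subseteq \{ p_1, \ldots, p_s \}$ we have $E_i \subseteq \widetilde{E_i} \subseteq E$, and $q = q_i q'$ for some $q' \in \diffpolyring$ because the factors of $q_i$ occur among those of $q$. If $p \in E_i : q_i^{\infty}$, then $p \in \difffield\{ B_i, \ldots, B_k \}$ and $q_i^N p \in E_i \subseteq E$ for some $N$, whence $q^N p = (q')^N q_i^N p \in E$ and therefore $p \in (E : q^{\infty}) \cap \difffield\{ B_i, \ldots, B_k \}$.

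For the reverse inclusion, take $p \in (E : q^{\infty}) \cap \difffield\{ B_i, \ldots, B_k \}$. Since $\system$ is simple, Proposition~\ref{prop:differentialmembership} applies and the pseudo-remainder of $p$ modulo $p_1, \ldots, p_s$ and their derivatives is zero. The heart of the argument is to show that this reduction can be carried out using only the equations of $P_i$ and without leaving $\difffield\{ B_i, \ldots, B_k \}$: at each step the highest reducible derivative occurring in the current polynomial (which lies in $\difffield\{ B_i, \ldots, B_k \}$) is a derivative of an indeterminate in a lower block, hence can match only the leader of some $\partial^m p_j$ with $p_j \in P_i$; subtracting the corresponding multiple of $\partial^m p_j$ after multiplying by the relevant initial or separant, a factor of $q_i$, keeps the reduct inside the subring. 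By the uniqueness of the pseudo-remainder for a simple system, the remainder produced by this confined reduction coincides with the one furnished by Proposition~\ref{prop:differentialmembership}, namely zero. Tracing the reduction steps then yields an identity $q_i^N p = \sum_{p_j \in P_i} \sum_{m \ge 0} c_{j,m}\, \partial^m p_j$ with all $c_{j,m} \in \difffield\{ B_i, \ldots, B_k \}$, so $q_i^N p \in E_i$ and $p \in E_i : q_i^{\infty}$.

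The main obstacle is precisely the confinement claim of the last paragraph: one must verify that the pseudo-reduction of an element of the lower-block subring never calls on an equation $p_j$ whose leader lies in a higher block and never introduces higher-block derivatives. This is exactly where the elimination property of the block ranking is indispensable, through the observation that differentiation preserves the block of a leader. The remaining bookkeeping, namely that the multipliers accumulated during the reduction are powers of $q_i$ (using that the separant is invariant under differentiation and that the initial of a proper derivative equals the separant), is routine.
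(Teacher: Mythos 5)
Your proof is correct: the easy inclusion via $E_i \subseteq E$ and $q = q_i q'$, and the hard inclusion via Proposition~\ref{prop:differentialmembership} combined with the key observation that, under the elimination ranking, pseudo-reduction of an element of $\difffield\{ B_i, \ldots, B_k \}$ can only ever invoke equations from $P_i$ and their derivatives (whose initials and separants are factors of $q_i$, since differentiation preserves the block of the leader), is exactly the standard argument. Note that the paper itself contains no proof of this statement --- it is quoted from \cite{RobertzHabil}, Prop.~3.1.36 --- and your reduction-theoretic argument is essentially the one given in that reference.
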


\section{Resolving Weyl Group Elements and their Associated Differential Systems}
\label{sec:res_weyl_diff_systems}

Let $\bap=(a^+_1,\dots, a^+_m)$, $\baz=(a^0_1,\dots,a^0_l)$ and 
$\bam=(a^-_1,\dots,a^-_m)$ be differential indeterminates over $\field$ and, using the notation introduced in Section~\ref{sec:classicalgroups}, define 
a general element of the Lie algebra $\liealg$ by
\begin{equation}\label{eq:defA}
A := \sum_{i=1}^m a^+_i X_{\alpha_i} + \sum_{i=1}^m a^-_i X_{-\alpha_i}
+ \sum_{i=1}^l a^0_i H_i.
\end{equation}

For further differential indeterminates $\bb=(b_1,\dots,b_m)$ over $\field$ let $u_{-\alpha_i}(b_i)$ be the image 
of $b_i X_{-\alpha_i}$ under the exponential map, that is $u_{-\alpha_i}(b_i)$ is a parametrization of the 
root group $U_{-\alpha_i}$.
We fix $w \in \weyl$. Let $i_1,\dots,i_k$ be the indices such that 
$$w(\roots^+) \cap \roots^-=  \{ -\alpha_{i_1},\dots , -\alpha_{i_k} \} , $$
i.e., exchanging the roles played by $\unipotent^{+}$ and $\unipotent^{-}$ in Theorem~\ref{Bruhat1}, $(\unipotent^-)_w'$ is generated by the root groups
$U_{-\alpha_{i_1}},\dots, U_{-\alpha_{i_k}}$. 
We set $\bb_w=(b_{i_1},\dots,b_{i_k})$ and define
\begin{equation}\label{eqn:def_uwb}
u_w(\bb_w)=u_{-\alpha_{i_1}}(b_{i_1}) \cdots u_{-\alpha_{i_k}}(b_{i_k}).    
\end{equation}
Gauge transformation of $A$ by $n(w) u_w(\bb_w)$ yields
\[
\begin{array}{rcl}
\gauge{(n(w) u_w(\bb_w))}{A} & = &
\Ad( n(w) u_w(\bb_w) ) (A)  + \dlog( n(w) u_w(\bb_w))\\[0.5em]
& = & \displaystyle
\sum_{i=1}^m f^+_{w,i} X_{\alpha_i} + \sum_{i=1}^m f^-_{w,i} X_{-\alpha_i}
+ \sum_{i=1}^l f^0_{w,i} H_i\,,
\end{array}
\]
where $f^+_{w,i}$, $f^-_{w,i}$ and $f^0_{w,i}$ are certain differential polynomials in  $\field\{ \bap,\bam,\baz, \bb_w \}$. 
We define the differential system $\system_w$ by 
\begin{equation*}
\system_w : \quad f^+_{w,l+1}=0, \quad \dots, \quad f^+_{w,m} =0. 
\end{equation*}
The system expresses conditions for the vanishing of
the coefficients of the basis elements $X_{\alpha_{l+1}},\dots, X_{\alpha_{m}}$, i.e.,
modulo the differential ideal $\ideal_w$ of $\field\{ \bap,\bam,\baz,\bb_w \}$
generated by $f^+_{w,l+1}$, \ldots, $f^+_{w,m}$ the matrix $A$ will be gauge equivalent to
\[
\sum_{i=1}^l \overline{f}^+_{w,i} X_{\alpha_i} + \sum_{i=1}^m \overline{f}^-_{w,i} X_{-\alpha_i}
+ \sum_{i=1}^l \overline{f}^0_{w,i} H_i,
\]
where $\overline{f}^{\pm}_{w,i}$, $\overline{f}^0_{w,i}$ denote the images of $f^{\pm}_{w,i}$, $f^0_{w,i}$ in the residue class ring.

The differential ideal $\ideal_w$ of $\field\{ \bap,\bam,\baz,\bb_w \}$
is prime.
Indeed, we gauge transform by root group elements belonging to negative roots. Let $u_{-\alpha_{i_j}}(b_{i_j})$ be one of them and let $c$ be the coefficient of some basis element of height $h$ in the linear representation of
\begin{equation}\label{eq:gaugeequivinterm}
    \gauge{u_{-\alpha_{i_{j-1}}}(b_{i_{j-1}})\dots u_{-\alpha_{i_{1}}}(b_{i_{1}})}{A}\,.
\end{equation}
By Remark~\ref{remark3} the effect of the adjoint action on $c$ is that
we potentially add to $c$ a monomial in $b_{i_j}$ and other coefficients of \eqref{eq:gaugeequivinterm} which belong to basis elements for roots of height greater than $h$.
Further, Remark~\ref{remark4} implies that the effect of the logarithmic derivative of $u_{-\alpha_{i_j}}(b_{i_j})$ is that we add the derivative of $b_{i_j}$ to the coefficient of the basis element $X_{-\alpha_{i_j}}$ in the linear representation of \eqref{eq:gaugeequivinterm}. Altogether the effect of gauge transforming \eqref{eq:gaugeequivinterm} with $u_{-\alpha_{i_j}}(b_{i_j})$ is that we add to the coefficient $c$ differential monomials in the indeterminate $b_{i_j}$ and in those 
indeterminates $\bap,\bam,\baz$ which are of height greater than $h$.
We conclude inductively that for each basis element $X_{\alpha_i}$, $X_{-\alpha_i}$ or $H_i$ of $\liealg$, the differential indeterminate which is the corresponding coefficient in \eqref{eq:defA} appears in the coefficient of the same basis element in
\begin{equation}\label{eq:gaugeequivinterm2}
    \gauge{u_w(\bb_w) }{A}
\end{equation}
only linearly with constant coefficient.
Finally gauge transforming \eqref{eq:gaugeequivinterm2} with $n(w)$ is only a permutation of the entries and so $f^+_{w,l+1}$, \ldots, $f^+_{w,m}$ can be solved 
for pairwise distinct differential indeterminates.
\begin{lemma}\label{lem:prime}
The differential ideal $\ideal_w$ is prime.
\end{lemma}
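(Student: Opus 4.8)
The plan is to prove that the residue ring $\field\{\bap,\bam,\baz,\bb_w\}/\ideal_w$ is a differential integral domain, which is equivalent to the primality of $\ideal_w$. The device will be a differential change of coordinates that transforms the generators $f^+_{w,l+1},\dots,f^+_{w,m}$ of $\ideal_w$ into a subset of the coordinate differential indeterminates, so that the quotient becomes a differential polynomial ring over $\field$.

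First I would make the structural observation of the preceding discussion precise. Write $\tilde{f}_\gamma$ for the coefficient of the Chevalley basis vector indexed by a root $\gamma$ in $\gauge{u_w(\bb_w)}{A}$ from \eqref{eq:gaugeequivinterm2}. By Remark~\ref{remark3} the adjoint action of each factor $u_{-\alpha_{i_j}}(b_{i_j})$ sends a root vector $X_\gamma$ only to vectors $X_{\gamma-s\alpha_{i_j}}$ of strictly smaller height for $s\geq 1$, while leaving the $s=0$ contribution unchanged with coefficient $1$; hence $\tilde{f}_\gamma = a_\gamma + g_\gamma$, where $a_\gamma$ is the coordinate attached to $\gamma$ in \eqref{eq:defA} and $g_\gamma$ involves only the coordinates $a_\mu$ with $\height(\mu)>\height(\gamma)$ together with the $\bb_w$. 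By Remark~\ref{remark4} the logarithmic derivative contributes only derivatives of the $b_{i_j}$ (landing in the Cartan and negative root components), so the coordinates $\bap,\bam,\baz$ never occur differentiated in any $\tilde{f}_\gamma$. Finally, transforming by the constant matrix $n(w)$ amounts to the adjoint action $\Ad(n(w))$, which permutes root vectors by $X_\gamma\mapsto \pm X_{w(\gamma)}$; thus $f^+_{w,j}=\pm\tilde{f}_{w^{-1}(\alpha_j)}$, and so $f^+_{w,j}=c_j z_j + g_j$ with $c_j=\pm 1\in\field^\times$, with the distinguished indeterminate $z_j := a_{w^{-1}(\alpha_j)}$ occurring linearly and undifferentiated, and with $g_j$ free of $z_j$.

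Next I would record the two consequences that drive the argument. Since $w^{-1}$ is a bijection of $\roots$ and the assignment of a coordinate to a root is injective, the indeterminates $z_{l+1},\dots,z_m$ are pairwise distinct. Moreover $g_j$ depends only on the $\bb_w$ and on coordinates $a_\mu$ with $\height(\mu)>\height(w^{-1}(\alpha_j))$; among these, the ones that are themselves distinguished are exactly the $z_{j'}$ with $\height(w^{-1}(\alpha_{j'}))>\height(w^{-1}(\alpha_j))$. Ordering the indices $j$ by decreasing $\height(w^{-1}(\alpha_j))$ therefore renders the system $f^+_{w,j}=c_j z_j + g_j$ triangular in the $z_j$.

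Finally, let $W$ denote the set of all differential indeterminates other than $z_{l+1},\dots,z_m$. I would define the differential $\field$-algebra endomorphism $\varphi$ of $\field\{\bap,\bam,\baz,\bb_w\}$ that fixes $W$ and sends $z_j\mapsto f^+_{w,j}$. The triangular shape together with the units $c_j$ lets one solve recursively, starting from the largest value of $\height(w^{-1}(\alpha_j))$, for each $z_j$ as a differential polynomial in $W$ and the images of the higher $z_{j'}$; this yields a two-sided inverse, so $\varphi$ is a differential automorphism. Under $\varphi$ the differential ideal generated by the coordinate indeterminates $z_{l+1},\dots,z_m$ is carried onto $\ideal_w$, and the corresponding quotient of $\field\{\bap,\bam,\baz,\bb_w\}$ is the differential polynomial ring over $\field$ in the remaining indeterminates $W$, an integral domain. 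Hence $\ideal_w$ is prime. The one delicate point is the rigorous verification that $\varphi$ is invertible; this rests entirely on the height filtration of Remark~\ref{remark3} and on the fact that $\bap,\bam,\baz$ enter undifferentiated, both of which are already established in the discussion preceding the lemma.
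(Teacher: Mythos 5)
Your proof is correct and takes essentially the same approach as the paper: the discussion preceding the lemma establishes exactly your key structural fact, namely that each $f^+_{w,j}$ contains the distinguished indeterminate attached to $w^{-1}(\alpha_j)$ linearly with constant coefficient, undifferentiated, and otherwise involves only indeterminates of strictly greater height, whence the generators can be solved for pairwise distinct differential indeterminates. Your triangular automorphism $\varphi$ merely makes explicit the recursive solvability step (and the resulting presentation of the quotient as a differential polynomial ring) that the paper leaves implicit.
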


We compute a Thomas decomposition $\system_w^1$, \ldots, $\system_w^r$ of $\system_w$
with respect to an elimination ranking satisfying
\[
\{ \bb_w \} \gg \{ \bap, \, \bam, \, \baz\,\} \,.
\]
For $i \in \{ 1, \ldots, r \}$ let $\ideal_w^i$ be the differential ideal of 
$\field\{  \bap,\bam,\baz,\bb_w \}$
generated by the left hand sides of the equations in $\system_w^i$, and define $q_w^i$ as the product of the initials and separants of all these generators. Then, by Proposition~\ref{prop:intersection}, we have
\[
\ideal_w = \left( \ideal_w^1 : (q_w^1)^{\infty} \right) \cap \ldots \cap \left( \ideal_w^r : (q_w^r)^{\infty} \right).
\]
Since $\ideal_w$ is prime, there exists a unique $j \in \{ 1, \ldots, r \}$ such
that $\ideal_w = \ideal_w^j : (q_w^j)^{\infty}$. Hence, the simple system $\system_w^j$ is generic
in the sense that the solution set of $\system_w^j$ is a
dense subset of the solution set of $\system_w$ (cf.\ also \cite[Subsect.~2.2.3]{RobertzHabil}).

Let $\mathcal{T}_w$ be the subset of $\system_w^j$ consisting of the equations and inequations
which only involve $\bap$, $\bam$, $\baz$ and their derivatives.
Let $E_w$ be the differential ideal of $\field\{ \bap, \bam, \baz \}$ which is generated by the left hand sides of the equations $\mathcal{T}_w$,
and define $q$ as the product of the initials and separants of these generators.
Because of the choice of elimination ranking we have
\[
\left( \ideal_w^j : (q_w^j)^{\infty} \right) \cap \field\{ \bap, \bam, \baz \} \, = \,
E_w : q^{\infty}.
\]
In principle, this approach allows to decide effectively whether gauge transformation by an element of a Bruhat cell defined by $w$ into normal form is possible by checking whether $E_w : q^{\infty}$ is the zero ideal. Moreover, by trying the finitely many elements of $\weyl$, a feasible one for gauge transformation to normal form can be found if one exists. In fact, we are going to predict such a feasible element and hence prove that gauge transformation to normal form is possible.

\begin{definition}
For $w \in \weyl$ we consider the two conditions:
\begin{equation}\tag{A}\label{eq:conditionA}
\exists \, \Psi \subseteq \roots^{-} \mbox{ such that } w(\Psi) = \roots^{+} \setminus \rootbasis
\end{equation}
and
\begin{equation}\tag{B}\label{eq:conditionB}
w(\roots^{+}) \cap \roots^{-} \supseteq \Psi\,.
\end{equation}
An element $w \in \weyl$ is called \emph{resolving} if it satisfies
Conditions~\eqref{eq:conditionA} and \eqref{eq:conditionB}.
\end{definition}
If $\Psi$ as above exists, then it is uniquely determined by $w$, namely as
$$\Psi = w^{-1}(\roots^+ \setminus \rootbasis).$$
In particular, we have $|\Psi| = m - l$.

\begin{proposition}\label{thm:existresolving}
For every classical group $\group$ the associated Weyl group $\weyl$ contains a resolving element.
\end{proposition}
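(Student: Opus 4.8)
The plan is to sidestep any case distinction and show that the longest element already works. I would take $w = \lweyl$, the unique element of maximal length in $\weyl$, as the resolving element for \emph{every} classical group simultaneously. This choice is natural rather than lucky: the normal form of \cite{Seiss} is itself defined through $\lweyl$, its conjugate by a representative of $\lweyl$ lying in $\cartan \oplus \bigoplus_i \liealg_{\alpha_i}$. The whole verification rests on the two structural facts about $\lweyl$ recalled in Section~\ref{sec:classicalgroups}: it interchanges the positive and negative roots, $\lweyl(\roots^+) = \roots^-$, and it is an involution, so that $\lweyl^{-1} = \lweyl$.

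For Condition~\eqref{eq:conditionA} the candidate set is forced to be $\Psi = \lweyl^{-1}(\roots^+ \setminus \rootbasis) = \lweyl(\roots^+ \setminus \rootbasis)$. Since $\roots^+ \setminus \rootbasis \subseteq \roots^+$ and $\lweyl$ carries every positive root to a negative one, we obtain $\Psi \subseteq \lweyl(\roots^+) = \roots^-$; as $\lweyl$ is a bijection of $\roots$, this $\Psi$ has the required cardinality $m-l$ and satisfies $\lweyl(\Psi) = \lweyl^2(\roots^+ \setminus \rootbasis) = \roots^+ \setminus \rootbasis$, so \eqref{eq:conditionA} holds. For Condition~\eqref{eq:conditionB} I would compute $\lweyl(\roots^+) \cap \roots^- = \roots^- \cap \roots^- = \roots^-$; since $\Psi \subseteq \roots^-$ this immediately gives $\Psi \subseteq \lweyl(\roots^+) \cap \roots^-$. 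Both conditions are therefore met, and $\lweyl$ is resolving, uniformly across types $A_l$, $B_l$, $C_l$ and $D_l$.

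The striking feature of this route is that there is essentially no obstacle to overcome: the two defining conditions collapse to the single combinatorial identity $\lweyl(\roots^+) = \roots^-$, and nothing about the individual classical types enters. The only place where a genuine type-by-type analysis would arise is if one insisted on a resolving element of \emph{minimal} length, so that $l(w) = |\Psi| = m-l$ and Condition~\eqref{eq:conditionB} holds with equality; the longest element, of length $m$, is far from optimal in this respect, and locating a minimal one would require inspecting each root system individually. Since the proposition asserts only existence, I would not pursue such a refinement: the element $\lweyl$ settles the statement directly for all classical groups.
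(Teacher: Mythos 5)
Your proof is correct and is essentially the paper's own argument: the paper also takes $w = \lweyl$ and verifies Conditions~\eqref{eq:conditionA} and \eqref{eq:conditionB} from $\lweyl(\roots^+) = \roots^-$ and $\lweyl^2 = 1$. The only cosmetic difference is that the paper describes the (same) set $\Psi$ as $\roots^- \setminus \rootbasis^-$, using additionally that $\lweyl(\rootbasis) = \rootbasis^-$, whereas you work directly with $\Psi = \lweyl(\roots^+ \setminus \rootbasis)$ and thereby avoid needing that fact.
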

\begin{proof}
Since the root system of a classical group is irreducible, there is a unique Weyl group element $\lweyl \in \weyl$ which maps $\roots^+$ to $\roots^-$. As a consequence we have $\lweyl (\rootbasis)=\rootbasis^-$ and
because $\lweyl$ has order two, it follows that $\lweyl(\roots^{-})=\roots^{+}$ 
and $\lweyl(\rootbasis^-)=\rootbasis$. Define $\Psi:=\roots^{-} \setminus \rootbasis^{-}$.
One easily checks that $\Psi$ and $\lweyl$ satisfy Condition~\eqref{eq:conditionA} and Condition~\eqref{eq:conditionB}.
\end{proof}

\begin{definition}\label{de:rankingadapted}
A ranking $>$ on $\field\{ \bap, \bam, \baz, \bb \}$
is said to be \emph{adapted to the root system} if the following three conditions are satisfied.
\begin{enumerate}
    \item The ranking $>$ is an elimination ranking satisfying $\{\bb \} \gg \{ \bap, \, \bam, \, \baz\}$.
    \item The restriction of $>$ to $\field\{ \bb \}$ is an orderly ranking.
    \item For all $i$, $j \in \{ 1, \ldots, m \}$ we have 
\[
b_i > b_j \quad \Rightarrow \quad | \height(-\alpha_i) | \ge | \height(-\alpha_j)| \,.
\]
\end{enumerate}
\end{definition}
Obviously a ranking that is adapted to the root system exists.

\begin{theorem}\label{thm:Sw_simple}
Let $w \in \weyl$ be resolving and assume that the ranking $>$ is adapted 
to the root system. Then we have:
\begin{enumerate}
\item \label{stat1:thm:Sw_simple} The differential system 
\[
\system_w : \quad f^+_{w,l+1} = 0, \quad \dots, \quad f^+_{w,m} = 0
\]
    is simple with respect to $>$.
    \item \label{stat2:thm:Sw_simple}
    For $r \in \{l+1,\dots,m \}$ the leader of $f^+_{w,r}$ is $b_{i}'$ 
    corresponding to $-\alpha_{i}=w^{-1}( \alpha_r) \in \Psi$.
    \item \label{stat3:thm:Sw_simple} The differential polynomials $f^+_{w,l+1}$, \dots,$ f^+_{w,m}$ are  linear in their leaders and have constant initials.
    \item \label{stat4:thm:Sw_simple} The differential ideal generated by $f^+_{w,l+1}$, \dots,$ f^+_{w,m}$ is prime.
    \item \label{stat5:thm:Sw_simple} If the leader of $f^+_{w,r}$ is $b_i'$, then $f^+_{w,r}$ contains the variable $a^-_i$ 
linearly with constant coefficient and variables $a^-_j$ with $|\height(-\alpha_j)| <|\height(-\alpha_i)|$
do not appear in $f^+_{w,r}$.
\end{enumerate}
\end{theorem}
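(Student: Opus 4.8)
The plan is to read off all five statements from a single height-graded analysis of $\gauge{(n(w)\,u_w(\bb_w))}{A}$, feeding in only Remarks~\ref{remark3} and~\ref{remark4}. Since $n(w)$ is constant, $\dlog(n(w)u_w)=\Ad(n(w))\,\dlog(u_w)$ and $\Ad(n(w)u_w)(A)=\Ad(n(w))\,\Ad(u_w)(A)$, so $\gauge{(n(w)u_w)}{A}=\Ad(n(w))\bigl(\gauge{u_w}{A}\bigr)$. As $\Ad(n(w))$ only relabels root spaces by $X_\beta\mapsto\pm X_{w(\beta)}$, the coefficient $f^+_{w,r}$ equals, up to sign, the coefficient of $X_{w^{-1}(\alpha_r)}$ in $\gauge{u_w}{A}$. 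For $r\in\{l+1,\dots,m\}$ Condition~\eqref{eq:conditionA} gives $w^{-1}(\alpha_r)=-\alpha_i\in\Psi$, and Condition~\eqref{eq:conditionB} ensures that $U_{-\alpha_i}$ is among the factors of $u_w$. Thus everything reduces to understanding the coefficient of one fixed negative root space $X_{-\alpha_i}$ in $\gauge{u_w}{A}$.

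The engine is a height-monotonicity lemma. Because $u_w$ is a product of root group elements for negative roots, the first identity in Remark~\ref{remark3} shows that $\Ad(u_w)$ can transport the coefficient of $X_\gamma$ in $A$ only into root spaces $X_{\gamma'}$ with $\gamma'=\gamma+(\text{nonnegative combination of the }-\alpha_{i_s})$; hence $|\height(\gamma')|\ge|\height(\gamma)|$, with equality only for $\gamma'=\gamma$, where the coefficient survives with the constant $c_{\beta,\alpha,0}=1$. Dually, Remark~\ref{remark4} together with $\dlog(gh)=\dlog(g)+\Ad(g)(\dlog(h))$ gives $\dlog(u_w)=\sum_j \Ad\bigl(u_{-\alpha_{i_1}}(b_{i_1})\cdots u_{-\alpha_{i_{j-1}}}(b_{i_{j-1}})\bigr)\bigl(b_{i_j}'\,X_{-\alpha_{i_j}}\bigr)$, so each $b_{i_j}'$ is injected into $X_{-\alpha_{i_j}}$ with coefficient $1$ and otherwise spread only into strictly lower root spaces. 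Reading off the coefficient of $X_{-\alpha_i}$ now yields at once: $b_i'$ occurs linearly with constant coefficient $1$; every other $b_{i_j}'$ occurring has $|\height(-\alpha_{i_j})|<|\height(-\alpha_i)|$; and $a^-_i$ occurs linearly with constant coefficient while every $a^-_j$ occurring satisfies $|\height(-\alpha_j)|\le|\height(-\alpha_i)|$, so none of strictly greater height is present. This is statement~\ref{stat3:thm:Sw_simple} and statement~\ref{stat5:thm:Sw_simple}.

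Statement~\ref{stat2:thm:Sw_simple} then follows by confronting the monotonicity with the adapted ranking. Because $\{\bb\}\gg\{\bap,\bam,\baz\}$ the leader is a $b$-derivative, and only first derivatives $b_{i_j}'$ occur. If some $b_{i_j}'\ne b_i'$ were ranked above $b_i'$, then $b_{i_j}>b_i$, and the third condition of Definition~\ref{de:rankingadapted} would force $|\height(-\alpha_{i_j})|\ge|\height(-\alpha_i)|$, contradicting the strict inequality just established; as the ranking is orderly on $\field\{\bb\}$, $b_i'$ is therefore the highest-ranked derivative present, i.e.\ the leader. Statement~\ref{stat4:thm:Sw_simple} is precisely Lemma~\ref{lem:prime}.

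It remains to prove simplicity, statement~\ref{stat1:thm:Sw_simple}, where the real difficulty lies. Condition~\ref{diffsimple1} and Condition~\ref{diffsimple3} of Definition~\ref{diffsimple} are immediate: the leaders $b_i'$ are pairwise distinct because $r\mapsto w^{-1}(\alpha_r)$ is injective on $\{\alpha_{l+1},\dots,\alpha_m\}$, and by statement~\ref{stat3:thm:Sw_simple} all initials and separants are nonzero constants. The crux is the autoreducedness Condition~\ref{diffsimple2}: one must show that the leader $b_{i'}'$ of one equation never appears in another equation $f^+_{w,r}$. By the monotonicity lemma such an appearance would force both $-\alpha_{i'}$ and $-\alpha_i=w^{-1}(\alpha_r)$ to lie in $\Psi$ with $-\alpha_i=-\alpha_{i'}+(\text{a nonzero nonnegative combination of the negative roots indexing }u_w)$. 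The hard part will be to rule this out, i.e.\ to prove that no element of $\Psi$ is obtained from another by adding such a combination; I expect this to require the sharp form $w(\Psi)=\roots^{+}\setminus\rootbasis$ of Condition~\eqref{eq:conditionA} together with Condition~\eqref{eq:conditionB}, controlling precisely how $\Psi$ sits inside the set of roots moved by $w$. This single root-combinatorial step is the genuinely delicate point; once it is in place the system is autoreduced, all conditions of Definition~\ref{diffsimple} hold, and the theorem follows.
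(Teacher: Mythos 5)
Your height-graded analysis of $\gauge{(n(w)u_w(\bb_w))}{A}$ is essentially the paper's own argument for statements \ref{stat2:thm:Sw_simple}, \ref{stat3:thm:Sw_simple} and \ref{stat5:thm:Sw_simple}: the paper likewise expands $\dlog(u(\bb))$ by the cocycle formula, uses Remarks~\ref{remark3} and \ref{remark4} to see that coefficients are only transported into root spaces of strictly larger $|\height|$ among the negative roots, specializes $b_k=0$ for $-\alpha_k\notin\Psi$, and invokes the adapted ranking of Definition~\ref{de:rankingadapted} to identify $b_i'$ as leader; statement \ref{stat4:thm:Sw_simple} is cited from Lemma~\ref{lem:prime} in both treatments. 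One caveat on \ref{stat5:thm:Sw_simple}: what you actually prove --- every occurring $a^-_j$ satisfies $|\height(-\alpha_j)|\le|\height(-\alpha_i)|$, with $a^-_i$ linear and of maximal $|\height|$ --- is exactly what the paper's proof establishes and what Section~\ref{sec:NF_generic} later uses, but it is the \emph{reverse} of the inequality printed in statement \ref{stat5:thm:Sw_simple}; the printed version appears to be a sign slip (it would be correct with signed heights, $\height(-\alpha_j)<\height(-\alpha_i)$, in place of absolute values), so your reading is the right one, though you should not silently equate the two.

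The genuine gap is your treatment of Condition~\ref{diffsimple2} of Definition~\ref{diffsimple}, which you yourself flag as the unproven ``crux.'' The root-combinatorial lemma you hope for --- that no element of $\Psi$ equals another element of $\Psi$ plus a nonnegative combination of the negative roots indexing $u_w$ --- is false in general. Take $\group$ of type $A_4$ and $w=\lweyl$, so $\Psi=\roots^-\setminus\rootbasis^-$: then $-(\alpha_1+\alpha_2)$, $-(\alpha_3+\alpha_4)$ and their sum $-(\alpha_1+\alpha_2+\alpha_3+\alpha_4)$ all lie in $\Psi$, and by Remark~\ref{remark3} applied to the cocycle expansion of $\dlog(u_w(\bb_w))$ the coefficient of $X_{-(\alpha_1+\alpha_2+\alpha_3+\alpha_4)}$ does acquire a term of the form $\pm b_j b_k'$ in which $b_k'$ is the leader of another equation of $\system_w$. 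So equations of $\system_w$ do in general contain leaders of other equations, and full Ritt--Kolchin autoreducedness simply fails. The paper never needs it: in the Thomas decomposition framework of Section~\ref{sec:thomas}, pseudo-reduction of one differential polynomial modulo another and its derivatives is only defined when the \emph{leaders} of the two polynomials involve the same differential indeterminate; since the leaders $b_i'$, $-\alpha_i\in\Psi$, involve pairwise distinct indeterminates, Condition~\ref{diffsimple2} holds immediately, irrespective of non-leader occurrences of other $b_j'$. Thus your ``genuinely delicate point'' dissolves under the correct notion of pseudo-reduction, and as written your route to statement \ref{stat1:thm:Sw_simple} cannot be completed, both because the step is missing and because the fact it relies on is false.
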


\begin{proof}
To begin with we compute the logarithmic derivative of
\[
u(\bb) \, = \, u_{-\alpha_1}(b_1)\cdots u_{-\alpha_m}(b_m).
\]
We shall argue that we obtain
\[
\dlog(u(\bb)) = \sum_{i=1}^m \Ad(u_{-\alpha_1}(b_1) \ldots u_{-\alpha_{i-1}}(b_{i-1})) \, \big(\dlog(u_{-\alpha_i}(b_i)) \big)  = \sum_{i=1}^m (b_i' +  p_i ) X_{-\alpha_i}\,,
\]
where $p_i$  is a differential polynomial in $\bb$ with $ \mathrm{ld}(b_i'+p_i)=b_i'$. By Remark~\ref{remark4} we have 
$\dlog(u_{-\alpha_i}(b_i))=b_i'X_{-\alpha_i}$. By Remark~\ref{remark3} additional terms in the coefficient of $X_{-\alpha_i}$ can only stem from the
adjoint action  
\[
\begin{array}{l}
\Ad(u_{-\alpha_1}(b_1) \ldots u_{-\alpha_{j-1}}(b_{j-1})) \, \big(\dlog(u_{-\alpha_j}(b_j)) \big)\\[0.5em]
\qquad\qquad
= \, \Ad(u_{-\alpha_1}(b_1) \ldots u_{-\alpha_{j-1}}(b_{j-1})) \, \big(b_j' X_{-\alpha_j} \big)
\end{array}
\]
such that $|\height(-\alpha_i)|>|\height(-\alpha_j)|$.
We conclude that
every term in each $p_i$ is of degree at least two and is divisible by precisely one
derivative $b_j'$ for some $j$ with $|\height(-\alpha_i)| > |\height(-\alpha_j)|$. 
We obtain now $u_w(\bb_w)$ from $u(\bb)$ by setting precisely those $b_k$ equal to zero for
which $-\alpha_k \not\in w(\roots^{+}) \cap \roots^{-}=\Psi$ and keeping the other $b_i$ as
indeterminates. Hence, 
for each $-\alpha_i \in \Psi$, the derivative $b_i'$ appears linearly and with constant coefficient in
the coefficient of $X_{-\alpha_i}$. Further, since the ranking is adapted to the root system,
the variable $b_i'$ is the leader of that coefficient. 

Adding the contribution $\Ad(u_w(\bb_w)) (A)$ of the adjoint action to the logarithmic derivative yields the
gauge transformation of $A$ by $u_w(\bb_w)$. Since the adjoint action only creates polynomial entries, we have
\[
\begin{array}{rcl}
\Ad (u_w(\bb_w)) (A) + \dlog(u_w(\bb_w)) & = &
\displaystyle\sum_{-\alpha_i \in \Psi} (b_i' +  p_i +q_i ) X_{-\alpha_i} \, +\\[0.5em]
\multicolumn{3}{c}{\displaystyle
\sum_{-\alpha_i \in \roots \setminus \Psi} g_{-\alpha_i} X_{-\alpha_i} + 
\sum_{\alpha_i \in \roots^{+} } g_{\alpha_i} X_{\alpha_i}
+\sum_{i=1}^l g_{0,i} H_i\,,}
\end{array}
\]
where $q_i$ as well as  $g_{\alpha_i}$ and $g_{0,i}$ are polynomials in the indeterminates 
$\bap$, $\bam$, $\baz$, $\bb_w$ and where $g_{-\alpha_i}$ are differential polynomials in the same indeterminates. 
By the first formula in Remark~\ref{remark3} the polynomial $q_i$ contains the term $a^{-}_i$ linearly and with constant coefficient and all other terms are non-constants and lie in the
ring 
\begin{equation*}
\field[\bap,\baz,\bb_w, a^-_j \mid |\height(-\alpha_i)| > |\height(-\alpha_j)| ].
\end{equation*}
Again, because the ranking is adapted to the root system, the leader in the coefficient 
of $X_{-\alpha_i}$ with $-\alpha_i \in \Psi$ is still $b_i'$.
By Condition~\eqref{eq:conditionA}, after a further gauge transformation by $n(w)$,
these coefficients become the left hand sides in
$$\system_w: \quad f^+_{w,l+1} = 0, \quad \dots, \quad f^+_{w,m} = 0 .$$
It follows that they satisfy statements~\ref{stat2:thm:Sw_simple}, \ref{stat3:thm:Sw_simple} and \ref{stat5:thm:Sw_simple}.   
Since $\system_w$ consists of equations with pairwise distinct leaders
and the left hand sides are linear in their leaders with constant initials,
Condition~\ref{diffsimple1} of Definition~\ref{diffsimple} is satisfied. Moreover, since the
leaders of these equations involve pairwise distinct differential indeterminates,
Condition~\ref{diffsimple2} of Definition~\ref{diffsimple} holds as well.
Since $\system_w$ contains no inequations, we conclude that
the differential system $\system_w$ is simple with respect to $>$.
Finally, statement~\ref{stat4:thm:Sw_simple} of the theorem follows from Lemma~\ref{lem:prime}.
\end{proof}

\begin{remark}
There may be Weyl group elements $w \in \weyl$ which are not resolving, but which lead to a consistent system 
$$\system_w: \quad f^+_{w,l+1} = 0, \quad \dots, \quad f^+_{w,m} = 0.$$
For example, when $\group = \mathrm{SP}_8$, i.e.\ the group of type $C_4$, in the representation defined in \cite[VIII, \S13.3]{BourGroupesetAlgebresVII-VIII},
the Weyl group element 
\[
{\small
w = \left( \begin{array}{cccccccc}
0&0&0&0&0&1&0&0\\
0&0&0&0&0&0&0&-1\\
0&0&0&0&0&0&-1&0\\
0&0&0&-1&0&0&0&0\\
0&0&0&0&-1&0&0&0\\
0&1&0&0&0&0&0&0\\
1&0&0&0&0&0&0&0\\
0&0&-1&0&0&0&0&0
\end{array} \right)
}
\]
is not resolving, but one can check by computing a Thomas decomposition that the corresponding system $\system_w$ is consistent, i.e.\ admits solutions.
\end{remark}

%%%%%%%%%%%%%%%%%%%%%%%%%%%%%%%%%%%%%%%%%%%%%%%%%%%%%%%%%%%%%%%%%%%%
\section{A Normal Form for a Generic Equation}\label{sec:NF_generic}
For a classical group $\group$ with root system $\roots$ and $l$ differential indeterminates $\boldsymbol{t}=(t_1,\dots,t_l)$ over $\field$ we constructed in \cite{Seiss} and \cite{Seiss_Generic} the matrix  
$A_{\group}(\boldsymbol{t})$ which defines a Picard-Vessiot extension of $\field\langle \boldsymbol{t} \rangle$ with differential Galois group $\group(\field)$. For its definition let $A_0^+$ and $A_0^-$ be the sum of basis elements of the root spaces corresponding to the simple roots and the negatives of the simple roots, respectively. It is shown in \cite[Lemma 6.4]{Seiss} that there are $\gamma_1,\dots,\gamma_l \in \roots^+$ such that the sum
$$\mathfrak{b}^+ \, = \, \ad (A_0^-)(\mathfrak{u}^+) + \sum_{i=1}^l \liealg_{\gamma_i} $$
is direct and $\height(\gamma_i)=m_i$, where $m_i$ are the exponents of $\liealg$. We refer to $\gamma_1,\dots,\gamma_l$ as \emph{complementary roots}.
\begin{definition}\label{de:nf}
The matrix 
$$ A_{\group}(\boldsymbol{t}) \, = \, A_0^+ + \sum_{i=1}^l t_i X_{-\gamma_i} $$
is called the \emph{normal form matrix} for $\group$ (with respect to the complementary roots $\gamma_1,\dots,\gamma_l$).
\end{definition} 

Now let $A$ be as in Section~\ref{sec:res_weyl_diff_systems}. From \cite{Juan} it is known that
$A$ defines a Picard-Vessiot extension of $\field\langle \bap, \bam,\baz \rangle$
with differential Galois group $\group(\field)$. 
In this section we construct 
a differential field extension 
$$\field\langle \bap, \bam,\baz \rangle \hookrightarrow \algext$$ such that the following conditions are satisfied:
\begin{enumerate}
\item The field of constants of $\algext$ is $\field$.
\item For the image $A \in \liealg(\algext)$ and for some $\boldsymbol{\overline{t}} \in \algext^l$, the matrix $A$
is gauge equivalent to $A_{\group}(\boldsymbol{\overline{t}})$ over $\algext$.
\item The Galois group of the equation defined by $A_{\group}(\boldsymbol{\overline{t}})$ over $\algext$ is $\group(\field)$.
\end{enumerate}

Let $w$ be a resolving element of the Weyl group $\weyl$ of $\group$ and let 
$>$ be a ranking on $\field\{ \bap, \bam, \baz,\bb_w \}$
which is adapted to the root system.
Then, by Theorem~\ref{thm:Sw_simple},
the differential ideal $\ideal_w$ of $\field\{ \bap, \bam, \baz,\bb_w \}$
which is generated by $f^+_{w,l+1}$, \ldots, $f^+_{w,m}$ is prime.
Furthermore, the generators form a
simple differential system and their leaders are the $b_i'$ corresponding to the
elements $-\alpha_i$ of $\Psi$. Moreover, the differential indeterminate $a^-_i$ occurs linearly with
constant coefficient in the generator with leader $b_i'$ and is the one
with largest $|\height(-\alpha_j)|$ among the differential indeterminates $a^-_j$ occurring in this generator.
Since $a^-_1$, \ldots, $a^-_m$ are differentially algebraically independent over $\field$, we have
\begin{equation}\label{eq:no_si}
\ideal_w \cap \field\{ \bap, a^-_i, \baz, \bb_w \mid -\alpha_i \not\in \Psi, i = 1, \ldots, m \} \, = \, \{ 0 \}.
\end{equation}
The kernel of the natural homomorphism
\[
\nu\colon \field\{ \bap, \bam, \baz, \bb_w \} \longrightarrow
\resfield := \Frac(\field\{ \bap, \bam, \baz, \bb_w  \} / \ideal_w)
\]
is equal to $\ideal_w$. We denote the images of the indeterminates by 
\[
\bi_i := \nu(b_i), \quad
\azi_i := \nu(a^0_i), \quad
\api_i := \nu(a^+_i), \quad
\ami_i := \nu(a^-_i) 
\]
and we write $\bazi=(\azi_1, \dots, \azi_{l})$, 
$\bapi=(\api_1, \dots , \api_m)$, $\bami=(\ami_1, \dots, \ami_m)$.
Moreover, denote by $\baml$ and $\bamil$ the tuples of the $l$ elements $a^-_i$
and $\ami_i$, respectively, with the property that $-\alpha_i \notin \Psi$.
Since
\begin{equation}\label{eq:no_bi}
\ideal_w \cap \field\{ \bam, \bap, \baz \} \, = \, \{ 0 \},
\end{equation}
the elements $\bazi$, $\bapi$ and $\bami$ are differentially algebraically
independent over $\field$. We can identify the differential field 
$\field\langle \bam,\bap,\baz \rangle$ with the differential subfield 
$\field\langle  \bami, \bapi,\bazi \rangle$ of $\resfield$, i.e.,  
$\resfield$ is a differential extension field of $\field\langle   \bami, \bapi,\bazi \rangle$ which is a purely differentially transcendental differential field 
 over $\field$.
We consider the differential subfield $\field\langle \bbi_w \rangle$ of $\resfield$, where $\bbi_w =(\bi_{i_1},\dots,\bi_{i_{m-l}})$ and  
$\Psi = \{-\alpha_{i_1},\dots, - \alpha_{i_{m-l}} \} $.

\begin{lemma}\label{DiffIndep1}
The subset $\{ \bazi, \bapi, \bamil \}$
of $\resfield$ is differentially algebraically independent over $\field\langle \bbi_w \rangle$
and $$\field\langle \bbi_w \rangle \langle \bazi, \bapi,\bamil \rangle \, = \, \resfield.$$
\end{lemma}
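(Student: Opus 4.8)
The plan is to prove the two assertions in turn, with the bulk of the work going into the stated equality of fields. First I would record that, by construction, $\resfield$ is generated as a differential field over $\field$ by the images under $\nu$ of all the original indeterminates, that is, by $\bbi_w$, $\bazi$, $\bapi$ and the full tuple $\bami$. Since $\bamil$ already lists precisely those $\ami_i$ with $-\alpha_i \notin \Psi$, the only generators missing from the asserted list $\bbi_w,\bazi,\bapi,\bamil$ are the $m-l$ elements $\ami_i$ with $-\alpha_i \in \Psi$. Hence the equality $\field\langle\bbi_w\rangle\langle\bazi,\bapi,\bamil\rangle = \resfield$ will follow once each such $\ami_i$ is exhibited inside the field on the left, and the whole argument rests on the triangular shape of the defining relations $f^+_{w,l+1}=0,\dots,f^+_{w,m}=0$ recorded in Theorem~\ref{thm:Sw_simple}.

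To produce these elements I would solve the defining system for the unknowns $\{\ami_i \mid -\alpha_i \in \Psi\}$. By Theorem~\ref{thm:Sw_simple}\,\ref{stat5:thm:Sw_simple}, the relation $f^+_{w,r}$ whose leader is $b_i'$ contains $a^-_i$ linearly with a constant coefficient, while the heights of the further variables $a^-_j$ that occur in it are restricted relative to $|\height(-\alpha_i)|$; as $r$ ranges over $l+1,\dots,m$ this makes the $m-l$ relations a triangular system in these unknowns. Solving them successively in the order prescribed by height, each constant-coefficient linear equation $f^+_{w,r}=0$, read inside $\resfield$, expresses $\ami_i$ rationally in terms of $\bapi$, $\bazi$, the entries of $\bbi_w$ and their derivatives, together with those $\ami_j$ entering $f^+_{w,r}$; each such $\ami_j$ is either already expressed over $\field\langle\bbi_w\rangle\langle\bazi,\bapi,\bamil\rangle$ by the induction hypothesis (when $-\alpha_j \in \Psi$) or belongs to $\bamil$ (when $-\alpha_j \notin \Psi$). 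Thus $\ami_i$ lies in that field as well, closing the induction and giving $\resfield \subseteq \field\langle\bbi_w\rangle\langle\bazi,\bapi,\bamil\rangle$, whence equality.

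For the independence statement I would appeal directly to \eqref{eq:no_si}: because $\ideal_w$ meets $\field\{\bap, a^-_i, \baz, \bb_w \mid -\alpha_i \notin \Psi\}$ only in $\{0\}$, the combined family $\bbi_w,\bazi,\bapi,\bamil$ is differentially algebraically independent over $\field$. Splitting off $\bbi_w$ and adjoining it to the base field, the exchange property for differential transcendence bases then shows that the remaining family $\{\bazi,\bapi,\bamil\}$ stays differentially algebraically independent over $\field\langle\bbi_w\rangle$, which is the first assertion. As a consistency check, this family has $m+2l$ members and, by the generation step, generates $\resfield$ over $\field\langle\bbi_w\rangle$, so it is in fact a differential transcendence basis.

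The step I expect to be most delicate is the triangular elimination in the generation part: one must verify that the $m-l$ relations can genuinely be solved one after another for the $\ami_i$ with $-\alpha_i \in \Psi$ without circularity, and this is exactly what the height restriction in Theorem~\ref{thm:Sw_simple}\,\ref{stat5:thm:Sw_simple} — combined with each $a^-_i$ occurring linearly and with constant coefficient — is designed to guarantee. Care is also needed to track that the solving introduces only derivatives of the $b_i$, which remain inside $\field\langle\bbi_w\rangle$, so that no genuinely new differential generators are created along the way.
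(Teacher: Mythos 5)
Your proposal is correct and takes essentially the same route as the paper: your generation step is the paper's recursive, height-ordered solving of the triangular system $f^+_{w,l+1}=0,\dots,f^+_{w,m}=0$ for the $\ami_i$ with $-\alpha_i\in\Psi$, using exactly the linearity-with-constant-coefficient and height restrictions of Theorem~\ref{thm:Sw_simple}. Your independence step (joint differential algebraic independence of $\bbi_w,\bazi,\bapi,\bamil$ over $\field$ via \eqref{eq:no_si}, then passing to the base field $\field\langle\bbi_w\rangle$) is precisely what the paper's denominator-clearing and lifting argument with the commutative diagram carries out, the only quibble being that the fact you need is the tower property of differential transcendence rather than an exchange property.
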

\begin{proof}
Let $P(\mathbf{Z}) \in \field\langle \bbi_w \rangle\{ \mathbf{Z} \}$
be a differential polynomial in the differential indeterminates $\mathbf{Z}=(Z_1,\dots,Z_{m+2l})$ such that $P( \bazi, \bapi, \bamil) = 0$.
After multiplying $P(\mathbf{Z})$ by a non-zero element of $\field\langle \bbi_w \rangle$ we may assume
that $P(\mathbf{Z})$ lies in the image of the composition of homomorphisms
\[
\field\{ \bb_w \}\{\mathbf{Z} \} \longrightarrow
\field\{ \bb_w, \baz, \bap, \bam \}\{ \mathbf{Z} \} \longrightarrow
\resfield\{ \mathbf{Z} \},
\]
where the first homomorphism is induced by the inclusion 
$$\field\{ \bb_w \} \longrightarrow \field\{ \bb_w, \baz, \bap, \bam \}$$
and the second homomorphism is defined by applying $\nu$ to each coefficient of a differential polynomial.
Let $\widehat{P}( \mathbf{Z}) \in \field\{ \bb_w \}\{  \mathbf{Z} \}$ be any preimage of
$P( \mathbf{Z})$ under the above composition of homomorphisms.
Note that by definition of $\bbi_w$ and $\bazi$, $\bapi$, $\bami$
the diagram
\[
\begin{tikzcd}
\field\{ \bb_w \}\{ \mathbf{Z} \} \arrow{rr}{b_i \, \mapsto \, \bi_i}
\arrow[swap]{ddr}{\widehat{P}(\mathbf{Z}) \, \mapsto \,  \nu(\widehat{P}(\baz, \bap, \baml))} & & \field\langle \bbi_w \rangle\{ \mathbf{Z} \} \arrow{ddl}{P(\mathbf{Z}) \, \mapsto \, P(\bazi, \bapi, \bamil)} \\ & & \\
  & \resfield &
\end{tikzcd} 
\]
is commutative.
Hence, $P(\bazi, \bapi, \bamil) = 0$
implies $\widehat{P}(\baz, \bap, \baml) \in \ker(\nu) = \ideal_w$.
Now, since 
$$\widehat{P}(\baz, \bap, \baml) \in \ideal_w \cap \field\{ \bb_w, \baz, \bap, \baml \},$$ 
we conclude with \eqref{eq:no_si} that $\widehat{P}(\mathbf{Z})$
is the zero polynomial and so is $P(\mathbf{Z})$.

The differential field $\resfield$ is generated by $\bbi_w$ and $\bazi$, $\bapi$ 
and $\bami$ over $\field$. We prove now that $\ami_i$ lies in
$\field\langle \bbi_w \rangle \langle \bazi, \bapi, 
\bamil \rangle$
for every $-\alpha_i \in \Psi$, i.e., $\resfield$ is generated by the above elements
with $\bamil$ instead of $\bami$. We fix some $-\alpha_i \in \Psi$. 
Using $f^+_{w,l+1},\dots,f^+_{w,m}$ and their properties stated in Theorem~\ref{thm:Sw_simple}
we can express $\ami_i$ in terms of 
$\bbi_w$, $\bazi$, $\bapi$, $\bamil$ 
and $\ami_j$ with $-\alpha_j \in \Psi$ of height greater than the height of $-\alpha_i$, proceeding recursively and starting with the 
elements of greatest height.
\end{proof}

\begin{lemma}\label{lem:fieldofconstants}
The field of constants of $\resfield$ is $\field$.
\end{lemma}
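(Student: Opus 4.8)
The plan is to deduce the claim from the description of $\resfield$ established in Lemma~\ref{DiffIndep1}, thereby reducing everything to the elementary fact that adjoining differentially algebraically independent elements to a differential field does not enlarge its field of constants.

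First I would record that the images $\bbi_w$ are differentially algebraically independent over $\field$. Indeed, $\field\{ \bb_w \}$ is a differential subring of the differential polynomial ring occurring in \eqref{eq:no_si}, so intersecting with it gives $\ideal_w \cap \field\{ \bb_w \} = \{ 0 \}$; hence the restriction of $\nu$ to $\field\{ \bb_w \}$ is injective and the $\bbi_w$ are differentially algebraically independent over $\field$. Consequently $\field\langle \bbi_w \rangle$ is a purely differentially transcendental differential field over $\field$, and therefore its field of constants is $\field$.

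Next I would invoke Lemma~\ref{DiffIndep1}, which gives $\resfield = \field\langle \bbi_w \rangle\langle \bazi, \bapi, \bamil \rangle$ with $\{ \bazi, \bapi, \bamil \}$ differentially algebraically independent over $\field\langle \bbi_w \rangle$. Thus $\resfield$ is a purely differentially transcendental extension of $\field\langle \bbi_w \rangle$, and applying the same constants-preservation fact a second time shows that the field of constants of $\resfield$ coincides with that of $\field\langle \bbi_w \rangle$, namely $\field$.

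The only non-formal ingredient is the standard lemma that, for differentially algebraically independent $y_1, \dots, y_k$ over a differential field $K$, there are no new constants in $K\langle y_1, \dots, y_k \rangle$; this follows by induction on $k$ from the one-indeterminate case, where a highest-derivative argument shows that a constant of $K\{ y \}$ must already lie in $K$, after which one passes to the fraction field. I do not anticipate a genuine obstacle here: the substantive work has been absorbed into \eqref{eq:no_si} and Lemma~\ref{DiffIndep1}, so the proof amounts to applying the constants-preservation lemma twice, to the tower $\field \subset \field\langle \bbi_w \rangle \subset \resfield$. The one point requiring a little care is to make sure the lemma is applied in the correct order, treating $\field\langle \bbi_w \rangle$ (not $\field$) as the base for the second application.
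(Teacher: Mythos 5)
Your proof is correct and takes essentially the same route as the paper: the paper deduces directly from \eqref{eq:no_si} that the full set $\bbi_w$, $\bazi$, $\bapi$, $\bamil$ is differentially algebraically independent over $\field$, so that $\resfield$ is purely differentially transcendental over $\field$ and the standard no-new-constants fact applies. Your two-step tower $\field \subset \field\langle \bbi_w \rangle \subset \resfield$ is a harmless repackaging of the same argument, since both your steps ultimately rest on \eqref{eq:no_si} (the first by intersecting down to $\field\{\bb_w\}$, the second via Lemma~\ref{DiffIndep1}).
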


\begin{proof}
By Lemma~\ref{DiffIndep1} we have $\resfield=\field\langle \bbi_w, \bazi, \bapi, \bamil \rangle $. Then  
the field of constants of $\resfield$
is $\field$, because $\bbi_w$, $\bazi$, $\bapi$, $\bamil$
are differentially algebraically independent over $\field$. Indeed, if $P(\mathbf{Z})$ is a differential polynomial
over $\field$ in the indeterminates $\mathbf{Z}=(Z_1, \ldots, Z_{2m+l})$ such that $P(\bbi_w, \bazi, \bapi, \bamil) = 0$,
then $P(\bb_w, \baz, \bap, \baml)$ can be considered as an element of $\field\{ \bb_w, \baz, \bap, \bam \}$
which is in the kernel of the natural homomorphism $\nu$. But \eqref{eq:no_si} implies that $P(\mathbf{Z})$ is the zero polynomial.
\end{proof}

The gauge transformation of $A(\bapi,\bami,\bazi)$ 
to a matrix $A_G(\bt)$ in normal form
with $\bt=(\overline{t},\dots,\overline{t}_l)$ in some finite algebraic extension of $\resfield$ is done in three steps. First we show that $A(\bapi,\bami,\bazi)$ is gauge
equivalent to a matrix $A(\bhp,\bhm,\bhz)$ with 
$\bhp=(\overline{h}_1^+,\dots,\overline{h}^+_l,0,\dots,0)$ over $\resfield$ and $\overline{h}^+_1,\dots,\overline{h}^+_l$, 
$\bhm$, $\bhz$ are differentially algebraically independent over 
$\field\langle \bbi \rangle$ and 
$\field\langle \bbi \rangle \langle \overline{h}^+_1,\dots, \overline{h}^+_l,\bhm,
\bhz \rangle = \resfield$.
In a second step we prove that $A(\bhp,\bhm,\bhz)$ is 
gauge equivalent to a matrix  
$A(\bgp,\bgm,\bgz)$ with 
$\bgp=( 1,\dots,1,0,\dots,0)$
with $l$ entries equal to $1$ and $m-l$ entries equal to $0$,
over an algebraic extension $\algext$ of $\resfield$. It will turn out that there is a differential subfield $\algext_1$ 
 of $\algext$ such that $\bgm,\bgz$ are differentially 
 algebraically independent over $\algext_1$ and 
 $\algext_1\langle \bgm,\bgz \rangle = \algext$.  
In a last step we gauge transform $A(\bgp,\bgm,\bgz)$ to 
a matrix of shape $A_G(\bt)$ and we prove that there is a differential subfield $\algext_2$ such that 
$\algext_2 \langle \bt \rangle = \algext$ and $\bt$ is 
differentially algebraically independent over $\algext_2$. Applying Theorem~\ref{thm:difftransfullgroup} 
we obtain that the differential Galois group of $A_G(\bt)$ over $\algext$ is
the full group $\group$ and so is the one for $A(\bapi,\bami,\bazi)$ 
over $\algext$.

Extending the notation of \eqref{eqn:def_uwb} let
\begin{equation*}
\boldsymbol{u}(\bbi_w) \, = \, u_{-\alpha_{i_1}}(\overline{b}_{i_1}) \cdots  u_{-\alpha_{i_{m-l}}}(\overline{b}_{i_{m-l}}) 
\end{equation*}
with the previously fixed
resolving Weyl group element $w$.

\begin{lemma}\label{TransStep1}
Gauge transformation yields
\[
\gauge{(n(w)\boldsymbol{u}(\bbi_w))}{A(\bapi,\bami,\bazi)} \, = \, A(\bhp,\bhm,\bhz)
\]
with $\bhp=(\overline{h}^+_1,\dots,\overline{h}^+_l,0,\dots,0) \in \resfield^m$,
$\bhm \in \resfield^m$ and 
$\bhz \in \resfield^l$. Further we have 
$$\field\langle \bbi_w \rangle \langle \bhp,\bhm,
\bhz \rangle \, = \, \resfield $$ 
and $\overline{h}^+_1,\dots,\overline{h}^+_l$, $\bhm$,
$\bhz$ are differentially algebraically independent over 
$\field\langle \bbi_w \rangle$.
\end{lemma}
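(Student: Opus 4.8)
The plan is to carry out the gauge transformation explicitly and then track the algebraic structure of the resulting coefficients. First I would apply the gauge transformation $\gauge{(n(w)\boldsymbol{u}(\bbi_w))}{A(\bapi,\bami,\bazi)}$ and compare the result with the computation done in the proof of Theorem~\ref{thm:Sw_simple}. Recall that the matrix $A(\bapi,\bami,\bazi)$ is the image of $A$ under the homomorphism $\nu$, and the coefficients of $\gauge{(n(w)u_w(\bb_w))}{A}$ are precisely the differential polynomials $f^+_{w,i}$, $f^-_{w,i}$, $f^0_{w,i}$. By the very definition of $\ideal_w$ we have $f^+_{w,l+1} = \ldots = f^+_{w,m} = 0$ in $\resfield$, so applying $\nu$ kills exactly the coefficients of $X_{\alpha_{l+1}}, \ldots, X_{\alpha_m}$. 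Therefore setting $\overline{h}^+_i := \nu(f^+_{w,i})$ for $1 \le i \le l$, $\overline{h}^-_i := \nu(f^-_{w,i})$ and $\overline{h}^0_i := \nu(f^0_{w,i})$ gives a matrix of the desired shape $A(\bhp,\bhm,\bhz)$ with $\bhp = (\overline{h}^+_1, \ldots, \overline{h}^+_l, 0, \ldots, 0)$. This establishes the gauge equivalence and the membership in $\resfield^m$, $\resfield^l$.

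For the generation claim $\field\langle \bbi_w \rangle \langle \bhp, \bhm, \bhz \rangle = \resfield$, I would argue that gauge transformation by an element of $\group(\resfield)$ is an invertible operation: the inverse transformation is $\gauge{(n(w)\boldsymbol{u}(\bbi_w))^{-1}}{A(\bhp,\bhm,\bhz)} = A(\bapi, \bami, \bazi)$, and this inverse gauge transformation expresses each $\api_i$, $\ami_i$, $\azi_i$ as a differential polynomial over $\field\langle \bbi_w \rangle$ in the entries $\bhp$, $\bhm$, $\bhz$ (since the matrix of $(n(w)\boldsymbol{u}(\bbi_w))^{-1}$ has entries in $\field\langle \bbi_w \rangle$ and the logarithmic-derivative contribution introduces only derivatives of the $\bi_{i_j}$). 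Combined with Lemma~\ref{DiffIndep1}, which gives $\resfield = \field\langle \bbi_w \rangle\langle \bazi, \bapi, \bamil \rangle$, this shows that $\bhp, \bhm, \bhz$ together with $\bbi_w$ generate all of $\resfield$ as a differential field, and conversely that these new generators lie in $\resfield$.

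The differential algebraic independence of $\overline{h}^+_1, \ldots, \overline{h}^+_l, \bhm, \bhz$ over $\field\langle \bbi_w \rangle$ is the step I expect to be the main obstacle, since it is not a formal consequence of invertibility alone. The clean way to see it is a transcendence-degree count: by Lemma~\ref{DiffIndep1} the $m + 2l$ elements $\bazi, \bapi, \bamil$ are differentially algebraically independent over $\field\langle \bbi_w \rangle$, so the differential transcendence degree of $\resfield$ over $\field\langle \bbi_w \rangle$ equals $m + 2l$. The tuple $(\bhp, \bhm, \bhz)$ with the $m-l$ forced zeros removed also consists of exactly $l + m + l = m + 2l$ elements (namely $\overline{h}^+_1, \ldots, \overline{h}^+_l$, the $m$ entries of $\bhm$, and the $l$ entries of $\bhz$). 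Since these generate $\resfield$ over $\field\langle \bbi_w \rangle$ by the previous paragraph, and a generating set of size equal to the differential transcendence degree must be differentially algebraically independent, the independence follows. The point requiring care is that one genuinely has $m + 2l$ nonzero generators and not fewer; this is guaranteed because the gauge transformation is a bijection on the orbit, so no collapse of the differential transcendence degree can occur. I would phrase this as: any differential algebraic relation among $\overline{h}^+_1, \ldots, \overline{h}^+_l, \bhm, \bhz$ over $\field\langle \bbi_w \rangle$ would, after substituting the differential-polynomial expressions for the $\overline{h}$'s in terms of $\bazi, \bapi, \bamil$, yield a nontrivial relation among the latter, contradicting Lemma~\ref{DiffIndep1}.
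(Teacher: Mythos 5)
Your proof is correct, and for the middle claim it takes a genuinely different route from the paper. The two ends of your argument coincide with the paper's: the shape of $A(\bhp,\bhm,\bhz)$ is obtained by applying the differential homomorphism $\nu$ to the coefficients $f^{\pm}_{w,i}$, $f^0_{w,i}$, where $f^+_{w,l+1},\dots,f^+_{w,m}$ die in $\resfield$ by the very definition of $\ideal_w$ (the paper phrases this in two stages, first $\boldsymbol{u}(\bbi_w)$, noting that the coefficients $\overline{p}^-_i$ with $-\alpha_i\in\Psi$ vanish, then the permutation by $n(w)$ via Condition~\eqref{eq:conditionA}); and the independence of the $m+2l$ elements $\overline{h}^+_1,\dots,\overline{h}^+_l,\bhm,\bhz$ is deduced, exactly as in the paper, from the invariance of the differential transcendence degree, since by Lemma~\ref{DiffIndep1} the field $\resfield$ has differential transcendence degree $m+2l$ over $\field\langle \bbi_w\rangle$ and a generating set of that cardinality must be independent. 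Where you diverge is the generation claim $\field\langle\bbi_w\rangle\langle\bhp,\bhm,\bhz\rangle=\resfield$. The paper proves it by recursively solving the coefficients for the indeterminates $\bapi$, $\bazi$, $\bamil$, starting from the root of greatest height, using that each coefficient contains ``its'' indeterminate linearly with nonzero constant coefficient while the other indeterminates occurring correspond to roots of greater height --- the same triangular structure that underlies Theorem~\ref{thm:Sw_simple} and Lemma~\ref{DiffIndep1}. You instead invoke invertibility of the gauge action: since $g=n(w)\boldsymbol{u}(\bbi_w)$ and $g^{-1}$ have entries in $\field[\bbi_w]$ and $\dlog(g^{-1})$ involves only $\bbi_w$ and their derivatives, the inverse gauge transformation expresses every $\api_i$, $\ami_i$, $\azi_i$ over $\field\langle\bbi_w\rangle$ as polynomials in the entries of $\bhp,\bhm,\bhz$, and $\resfield=\field\langle\bbi_w,\bapi,\bami,\bazi\rangle$ does the rest (note you do not even need Lemma~\ref{DiffIndep1} for this inclusion, only for the degree count). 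Your argument is shorter and more robust, requiring nothing about heights beyond the gauge matrix being defined over $\field\langle\bbi_w\rangle$; the paper's recursion yields finer structural information about which indeterminates occur in which coefficient, of the kind that is reused later, e.g.\ in Lemma~\ref{TransStep3}~\ref{claim_transstep3_b}.

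One small caution: your closing gloss --- that a differential algebraic relation among the $\overline{h}$'s would ``after substituting \dots\ yield a nontrivial relation'' among $\bazi,\bapi,\bamil$ --- is not by itself a proof, since a nonzero differential polynomial may well become identically zero under such a substitution. The valid justification is precisely the counting argument you state just before it (the number of generators equals the differential transcendence degree), so nothing essential is missing, but that final sentence should be dropped or reformulated.
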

\begin{proof}
First we apply gauge transformation by $\boldsymbol{u}(\bbi_w)$ to $A(\bapi,\bami,\bazi)$ 
and obtain 
\[
\gauge{\boldsymbol{u}(\bbi_w)}{A(\bapi,\bami,\bazi)} \, = \,
\sum_{i=1}^m \overline{p}^+_{i} X_{\alpha_i} + \sum_{i=1}^m \overline{p}^-_{i} X_{-\alpha_i}  +  
\sum_{j=1}^l \overline{p}^0_{j} H_j\,,
\]
where $\overline{p}^{\pm}_{i}$, $\overline{p}^0_{j} \in \field\{\bbi_w\} \{\bapi,\bazi,\bami \} \subset \resfield$.
First of all, by construction of the extension field $\resfield$ and its elements $\bbi_w$, the coefficients $\overline{p}^-_{i}$ with $-\alpha_i \in \Psi$ vanish. 
Let $\mathbf{\overline{p}}^+=(\overline{p}^+_1,\dots,\overline{p}^+_m)$ and  $\mathbf{\overline{p}}^0=(\overline{p}^0_1,\dots,\overline{p}^0_m)$ and write $\mathbf{\overline{p}}^-_{\neg \Psi}$ for the $l$-tuple 
of those $\overline{p}^-_i$ such that $-\alpha_i \notin \Psi$. 
By Lemma~\ref{DiffIndep1} we have $\resfield = \field\langle \bbi_w \rangle \langle \bazi, \bapi, 
\bamil \rangle $ and we are going to prove that 
 $$ \field\langle \bbi_w \rangle \langle \bazi, \bapi, 
\bamil \rangle \, = \, \field\langle \bbi_w \rangle \langle \mathbf{\overline{p}}^+ , \mathbf{\overline{p}}^0, 
\mathbf{\overline{p}}_{\neg \Psi}^- \rangle $$
 by showing that $\bazi, \bapi, 
\bamil$ are contained in $\field\langle \bbi_w \rangle \langle \mathbf{\overline{p}}^+ , \mathbf{\overline{p}}^0, 
\mathbf{\overline{p}}_{\neg \Psi}^- \rangle$. Then we conclude that $\mathbf{\overline{p}}^+ , \mathbf{\overline{p}}^0, \mathbf{\overline{p}}_{\neg \Psi}^-$ are 
differentially algebraically independent over $\field\langle \bbi_w \rangle$, since 
by Lemma~\ref{DiffIndep1}  
the same is true for $\bapi,\bazi,\bamil$ (by the invariance of the differential transcendence degree).

The desired equality of the differential fields follows from the fact that one can recursively
solve the differential polynomials $\mathbf{\overline{p}}^+ , \mathbf{\overline{p}}^0, \mathbf{\overline{p}}_{\neg \Psi}^-$
for the indeterminates  $\bapi$, $\bazi_i$, $\bamil$, starting 
with the indeterminate corresponding to the root of greatest height.
More precisely, since we gauge 
transformed $A(\bapi,\bami,\bazi)$ with $\boldsymbol{u}(\bbi_w)$, i.e. a product of
root group elements corresponding to negative roots only, the differential polynomial 
$p^+_{m}$, which belongs to the greatest positive root $\alpha_m$, is simply the indeterminate $\api_m$. Hence, we trivially have  
$\api_m \in \field\langle \bbi_w \rangle \langle  \mathbf{\overline{p}}^+ , 
\mathbf{\overline{p}}^0, \mathbf{\overline{p}}_{\neg \Psi}^- \rangle$.
Let $\overline{p}_i$ be one of the coefficients corresponding to a root of height less than $m$ and let $\overline{a}_i$ be the  indeterminate among  $\bazi, \bapi, \bamil$ corresponding to the same root.
Then in $\overline{p}_i$ the indeterminate $\overline{a}_i$
 occurs linearly with non-zero constant coefficient.
 Moreover, the indeterminates other than $\overline{a}_i$ among $\bapi$, $\bazi$, $\bamil$ occurring in $\overline{p}_i$
 correspond to roots of height neither smaller than nor equal to the one corresponding to $\overline{a}_i$, where we consider the indeterminates $\bazi$ to correspond to height zero. So we recursively obtain that $\bapi$, $\bazi$, 
$\bamil$ lie in $\field\langle \bbi_w \rangle \langle \mathbf{\overline{p}}^+ , \mathbf{\overline{p}}^0, 
\mathbf{\overline{p}}_{\neg \Psi}^- \rangle$ as claimed.

Finally the gauge transformation of the intermediate matrix with $n(w)$ yields a matrix whose
coefficients $\bhp$, $\bhm$ and $\bhz$ have the desired properties.
\end{proof}

\begin{lemma}\label{TransStep2}
There exists a finite algebraic extension $\algext$ of $\resfield$ and an element $t$ of the torus $\torus(\algext)$ of $\algext$-rational points such that  
$A(\bhp,\bhm,\bhz)$ is gauge 
equivalent by $t$ to a matrix of shape 
$A(\bgp,\bgm,\bgz)$ with 
\begin{equation*}
\bgp \, = \, (\underbrace{1,\dots,1}_{l}, \underbrace{0,\dots,0}_{m-l}), 
\end{equation*}
$\bgm \in \algext^m$ and $\bgz \in \algext^l$. 
There is a differential subfield $\algext_1$ of $\algext$ such that  $\algext_1\langle \bgm,
\bgz \rangle = \algext$ and $\bgm$ and $\bgz$ are differentially 
 algebraically independent over $\algext_1$.
\end{lemma}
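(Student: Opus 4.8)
The goal is to normalize the nonzero simple-root coefficients $\overline{h}^+_1,\dots,\overline{h}^+_l$ of $A(\bhp,\bhm,\bhz)$ to all equal $1$ by a gauge transformation by a torus element $t$. The plan is to use the first formula in Remark~\ref{remark3}, which shows that the adjoint action of a torus element $t=\prod h_{\alpha_i}(c_i)$ scales each root space coefficient by a character of the torus; in particular the coefficient of $X_{\alpha_i}$ for a simple root $\alpha_i$ gets multiplied by $\alpha_i(t)$, while $\dlog(t)$ contributes only to the Cartan part (by Remark~\ref{remark4}, since $t$ is in the torus its logarithmic derivative lands in $\cartan$). Thus conjugation by a suitable $t$ turns $\overline{h}^+_i$ into $\alpha_i(t)\cdot\overline{h}^+_i$, and we must solve $\alpha_i(t)\cdot\overline{h}^+_i = 1$ for $i=1,\dots,l$.

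First I would make the system of character equations explicit. Since $\alpha_1,\dots,\alpha_l$ form a basis of the character lattice of $\torus$ (the simple roots span $X^*(\torus)\otimes\Q$, and for the classical groups one has enough characters so that the map $t\mapsto(\alpha_1(t),\dots,\alpha_l(t))$ is dominant), the equations $\alpha_i(t)=(\overline{h}^+_i)^{-1}$ can be solved for $t$ after extracting suitable roots of the $\overline{h}^+_i$. Concretely, writing $t$ in coordinates via the cocharacters dual to the $\alpha_i$, each coordinate $c_j$ is a monomial in the $(\overline{h}^+_i)^{-1}$ with rational exponents; clearing denominators in these exponents forces passage to a finite algebraic extension $\algext$ of $\resfield$ in which the required radicals of the $\overline{h}^+_i$ exist. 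This is exactly the source of the algebraicity: over $\resfield$ itself the roots need not be present (the $\SL_2$ footnote in the introduction is the prototype), but adjoining finitely many radicals produces the field $\algext$.

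For the second assertion I would identify $\algext_1$ and establish the differential independence of $\bgm,\bgz$ over it. By Remark~\ref{remark3} the torus action sends the coefficient of each $X_{-\alpha_i}$ to $(-\alpha_i)(t)\,\overline{h}^-_i$ and sends each $H_j$-coefficient to $\overline{h}^0_j + (\dlog t)_j$, where $(\dlog t)_j$ is the logarithmic derivative contribution. Hence $\bgm$ and $\bgz$ are obtained from $\bhm$, $\bhz$ by multiplying by fixed characters of the now-determined $t$ and by adding a fixed element of $\algext$, respectively. Since by Lemma~\ref{TransStep1} the elements $\overline{h}^+_1,\dots,\overline{h}^+_l,\bhm,\bhz$ are differentially algebraically independent over $\field\langle\bbi_w\rangle$, I would take $\algext_1$ to be the differential subfield of $\algext$ generated over $\field\langle\bbi_w\rangle$ by the entries of $t$ (equivalently by the chosen radicals of the $\overline{h}^+_i$) together with the additive $\dlog(t)$ corrections; the key point is that $t$ depends only on the $\overline{h}^+_i$ and not on $\bhm,\bhz$, so adjoining it does not disturb the independence of $\bhm,\bhz$, and multiplying by nonzero characters or adding fixed constants is an invertible differential-birational operation preserving differential transcendence degree. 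Therefore $\bgm,\bgz$ remain differentially algebraically independent over $\algext_1$, and $\algext_1\langle\bgm,\bgz\rangle=\algext$ follows because $\bhm,\bhz$ are recovered from $\bgm,\bgz$ over $\algext_1$.

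The main obstacle I anticipate is the bookkeeping needed to verify that $\algext_1$ can be chosen so that $\bgm,\bgz$ are genuinely independent over it: one must be careful that the radicals defining $t$ are algebraic over $\field\langle\bbi_w,\overline{h}^+_1,\dots,\overline{h}^+_l\rangle$ and hence do not add differential transcendence, while simultaneously checking that the additive $\dlog(t)$ corrections to the Cartan coefficients lie in $\algext_1$ rather than secretly entangling the $\bhz$. I would handle this by computing $\dlog(t)$ explicitly from the chosen cocharacter expression for $t$: since each $\dlog(h_{\alpha_i}(c_i)) = (c_i'/c_i)\,H_i$-type term is a logarithmic derivative of a radical of the $\overline{h}^+_k$, it lies in $\field\langle\bbi_w,\overline{h}^+_1,\dots,\overline{h}^+_l\rangle^{\mathrm{alg}}$, confirming it belongs to $\algext_1$ and is independent of $\bhm,\bhz$.
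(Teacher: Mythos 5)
Your proposal is correct and follows essentially the same route as the paper: gauge by a torus element whose coordinates solve the normalization equations $\overline{h}^+_j\prod_i x_i^{\langle\alpha_i,\alpha_j\rangle}=1$ as rational powers of $\overline{h}^+_1,\dots,\overline{h}^+_l$ (the paper inverts the Cartan matrix after taking logarithms, which is your dual-cocharacter computation), adjoin the finitely many radicals to obtain the finite algebraic extension $\algext$, set $\algext_1=\field\langle\bbi_w\rangle\langle\bhp\rangle(\boldsymbol{\overline{x}})$, and conclude by invariance of the differential transcendence degree since the adjoint action multiplies $\bhm$ by units of $\algext_1$ while $\dlog(t)\in\cartan(\algext_1)$ shifts $\bhz$ by elements of $\algext_1$. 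The only cosmetic slips are the appeal to Remark~\ref{remark3} (which concerns unipotent elements; the torus formula $\Ad(t_i(x_i))(X_{\alpha_j})=x_i^{\langle\alpha_i,\alpha_j\rangle}X_{\alpha_j}$ used in the paper's proof is the one actually needed) and the initial claim that the simple roots form a basis of the character lattice, which you immediately and correctly weaken to a $\Q$-spanning statement, exactly what the rational-exponent argument requires.
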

\begin{proof}
For indeterminates $\boldsymbol{x}=(x_1,\dots, x_l)$ let 
\[
\boldsymbol{t}(\boldsymbol{x}) \, = \, t_1(x_1) \cdots t_l(x_l)\,,
\]
where for a simple root $\alpha_i$ the torus element $t_i(x_i)$ is the image of $\mathrm{diag}(x_i,x_i^{-1})$ under the homomorphism from $\SL_2$ to the group
generated by $U_{\alpha_i}$ and $U_{-\alpha_i}$. For two simple roots $\alpha_i$, $\alpha_j \in \rootbasis$
we have
$$\Ad (t_i(x_i))(X_{\alpha_j}) \, = \, x_i^{\langle \alpha_i,\alpha_j\rangle } X_{\alpha_j}\,,$$
where $\langle \alpha_i,\alpha_j\rangle$ is the integer at the entry $(i,j)$ of the corresponding Cartan matrix
and so the coefficient of $X_{\alpha_j}$ in the linear representation of  
\[
\Ad(\boldsymbol{t}(\boldsymbol{x}))(A(\bhp,\bhm,\bhz))
\]
is $\overline{h}^+_j  x_1^{\langle \alpha_1,\alpha_j\rangle} \cdots x_l^{\langle \alpha_l,\alpha_j\rangle}$. 
We require now that 
\begin{equation}\label{eq:normalization}
\overline{h}^+_1 \prod_{i=1}^l x_i^{\langle \alpha_i,\alpha_1\rangle} \, = \, 1 ,\quad \dots, \quad \, \overline{h}^+_l \prod_{i=1}^l x_i^{\langle \alpha_i,\alpha_l\rangle} \, = \, 1.
\end{equation}
By choosing a logarithm, solutions $\boldsymbol{\overline{x}}=(\overline{x}_1,\dots,\overline{x}_l)$ of the above equations are obtained as exponential functions
of solutions to the linear system
\[
\Gamma \, (\log(x_1), \ldots, \log(x_l))^{tr} \, \, = \, \, -(\log(\overline{h}^+_1), \ldots, \log(\overline{h}^+_l))^{tr}\,,
\]
where $\Gamma$ is the Cartan matrix of $\liealg$. Hence, each $\overline{x}_i$ is a certain product of powers
of $\overline{h}^+_1$, \ldots, $\overline{h}^+_l$ with rational exponents $q_i$. 
Let $\algext=\resfield(\boldsymbol{\overline{x}})$
be the algebraic extension of $\resfield $ generated by $\boldsymbol{\overline{x}}$ (whose structure as a differential field is uniquely 
determined). For the algebraic extension $\algext_1 = \field\langle \bbi_w \rangle \langle \bhp\rangle (\boldsymbol{\overline{x}})$ we have 
\[
\algext_1\langle \bhm,\bhz \rangle \, = \, \algext\,.
\]
Since $\bhm,\bhz$ and $\bhp$ are differentially algebraically independent over $\field\langle \bbi_w \rangle$ 
by Lemma~\ref{TransStep1}  and since $\algext_1$ is defined over $ \field\langle \bbi_w \rangle \langle \bhp\rangle$, we conclude that 
$\bhm,\bhz$ are differentially algebraically independent over $\algext_1$. Let $\bgp,\bgm,\bgz$ be the coefficients of the linear 
representation of 
$$\gauge{t}{A(\bhp,\bhm,\bhz)} \, = \, A(\bgp,\bgm,\bgz),$$
where $t=\boldsymbol{t}(\boldsymbol{\overline{x}})$.
As above, for $\alpha \in \Phi$ the effect of the adjoint action of $t_i(\overline{x}_i)$  on $X_{\alpha}$ is multiplication by 
the scalar $\overline{x}_i^{\langle \alpha_i,\alpha \rangle}$ and the action of $\Ad (\boldsymbol{t}(\boldsymbol{\overline{x}}))$ on the Cartan subalgebra is trivial. 
Further the logarithmic derivative of $\boldsymbol{t}(\boldsymbol{\overline{x}})$ is an element of $\mathfrak{h}$. We conclude that, since $\boldsymbol{\overline{x}} \in \algext_1$, the elements $\overline{g}^0_i$ and $\overline{h}^0_i$ differ by an element of $\algext_1$ and that $\overline{g}^+_j$, $\overline{g}^-_j$ are obtained from $\overline{h}^+_j$, $\overline{h}^-_j$ by multiplication by an element of $\algext_1$.
Then, by the choice of $\boldsymbol{\overline{x}}$, the first $l$ entries of $\bgp$ are equal to $1$ and the others remain zero. We conclude further that
$\bhm$, $\bhz$ are contained in $\algext_1\langle \bgm,\bgz \rangle $ and so we have 
$$\algext_1\langle \bgm,\bgz \rangle \, = \, \algext.$$
It follows that $ \bgm,\bgz$ are differentially algebraically independent over $\algext_1$, since the same holds for $\bhm$, $\bhz$ over $\algext_1$
(due to the invariance of the differential transcendence degree).
\end{proof}

\begin{lemma}\label{TransStep3}
There exists $\bt =(\overline{t}_1,\dots,\overline{t}_l) \in \algext_1 \{ \bgm,\bgz \}^l $ with the following properties:
\begin{enumerate}
    \item \label{claim_transstep3_a} The matrix $A(\bgp,\bgm,\bgz)$ is gauge equivalent to $A_{\group}(\bt)$ over $\algext$.
    \item \label{claim_transstep3_b} Let $i_1,\dots,i_l$ be the indices of the complementary roots $-\alpha_{i_1},\dots,-\alpha_{i_l}$.
    Then, for $j=1,\dots,l$, the differential polynomial $\overline{t}_j$ does not involve
    the indeterminates $\overline{g}^-_{i_{j+1}},\dots, \overline{g}^-_{i_l}$ nor any proper derivative of $\overline{g}^-_{i_j}$,
    but the degree of $\overline{t}_j$ in the indeterminate $\overline{g}^-_{i_j}$ is equal to one,
    and the coefficient of $\overline{g}^-_{i_j}$ in $\overline{t}_j$ is constant.
\item \label{claim_transstep3_c} Let $\algext_2$ be the differential field generated by $\bgz$ and 
$$\{ \overline{g}^-_i \in \bgm \mid i \notin \{ i_1,\dots,i_l \}  \}  $$
over $\algext_1$.
Then $\algext_2\langle \bt \rangle =\algext$
and the elements $\bt$ are differentially algebraically independent over $\algext_2$.
\end{enumerate}
\end{lemma}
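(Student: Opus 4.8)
The plan is to realize the required gauge transformation by a single element of the lower unipotent group $\unipotent^-(\algext)$ and to read off $\bt$ degree by degree from a principal grading argument. First I would equip $\liealg$ with the principal grading $\liealg = \bigoplus_d \liealg^{(d)}$, where $\liealg^{(0)} = \cartan$ and $\liealg^{(d)}$ is spanned by the root spaces $\liealg_\alpha$ with $\height(\alpha) = d$. Then $A_0^+ \in \liealg^{(1)}$, and since $\bgp = (1,\dots,1,0,\dots,0)$ the matrix $A(\bgp,\bgm,\bgz)$ has the form $A_0^+ + L$ with $L \in \bigoplus_{d \le 0} \liealg^{(d)} = \mathfrak{b}^-$, whose degree-$(-d)$ component is $\sum_{\height(\alpha_i)=d} \overline{g}^-_i X_{-\alpha_i}$ for $d \ge 1$ and whose degree-$0$ component is $\sum_j \overline{g}^0_j H_j$. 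Applying the Chevalley involution to \cite[Lemma 6.4]{Seiss} gives the direct decomposition $\mathfrak{b}^- = \ad(A_0^+)(\mathfrak{u}^-) \oplus \bigoplus_{i=1}^l \liealg_{-\gamma_i}$, which refines in each degree to $\liealg^{(-d)} = \ad(A_0^+)(\liealg^{(-d-1)}) \oplus \bigoplus_{\height(\gamma_i)=d} \liealg_{-\gamma_i}$, with $\ad(A_0^+)$ injective on the negative graded pieces by principal $\liesl_2$-theory (its kernel, the centralizer of the regular nilpotent $A_0^+$, sits in the positive degrees $m_1,\dots,m_l$).

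Next I would clear $L$ inductively. For $u = \exp(Y)$ with $Y = \sum_{d \ge 1} Y^{(-d)} \in \mathfrak{u}^-$, Remark~\ref{remark3} shows that $\Ad(u)(A_0^+ + L)$ stays in $\liealg^{(1)} \oplus \mathfrak{b}^-$ with degree-$1$ part exactly $A_0^+$, while Remark~\ref{remark4} shows $\dlog(u) \in \mathfrak{u}^-$ with degree-$(-d)$ part $Y^{(-d)\prime}$ plus brackets of strictly lower degree. Processing $d = 0, 1, 2, \ldots$, assume degrees $0, \dots, -(d-1)$ have already been moved into $\bigoplus_{\height(\gamma_i) < d} \liealg_{-\gamma_i}$ by the choice of $Y^{(-1)}, \dots, Y^{(-d)}$. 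The degree-$(-d)$ component of the partially transformed matrix is then $C^{(-d)} - \ad(A_0^+)(Y^{(-d-1)})$, with $C^{(-d)} \in \liealg^{(-d)}$ already fixed and $Y^{(-d-1)}$ still free; using the graded decomposition I pick the unique $Y^{(-d-1)}$ matching the $\ad(A_0^+)(\liealg^{(-d-1)})$-component of $C^{(-d)}$, leaving a remainder in $\bigoplus_{\height(\gamma_i)=d}\liealg_{-\gamma_i}$. The recursion stops once $d$ exceeds the maximal height, yielding $u \in \unipotent^-(\algext)$ with $\gauge{u}{A(\bgp,\bgm,\bgz)} = A_0^+ + \sum_i \overline{t}_i X_{-\gamma_i} = A_{\group}(\bt)$, where $\overline{t}_i$ is the surviving $X_{-\gamma_i}$-coefficient. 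This establishes~\ref{claim_transstep3_a}; since $\ad(A_0^+)$ has integer structure constants and $\dlog$ only introduces derivatives, each $\overline{t}_j$ lies in $\field\{\bgm,\bgz\} \subseteq \algext_1\{\bgm,\bgz\}$.

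The main obstacle is the height bookkeeping needed for~\ref{claim_transstep3_b}. Ordering the complementary roots so that $\height(\gamma_j) = m_j$, the coordinate $\overline{t}_j$ is produced at step $d = m_j$ as the $X_{-\gamma_j}$-component of the complementary projection of $C^{(-m_j)}$. The point is that $Y^{(-1)}, \dots, Y^{(-m_j)}$ depend only on data of height $< m_j$, so every correction feeding into $C^{(-m_j)}$ — the brackets and the derivative terms from $\dlog$ — involves only the $\overline{g}^-_k, \overline{g}^0_k$ of height $< m_j$ and their derivatives. Hence the height-$m_j$ coordinate $\overline{g}^-_{i_j}$ enters $C^{(-m_j)}$ only through the original undifferentiated term $\overline{g}^-_{i_j} X_{-\gamma_j}$; as $X_{-\gamma_j}$ is a complementary basis vector the projection fixes its coefficient, so $\overline{g}^-_{i_j}$ appears in $\overline{t}_j$ linearly, with constant coefficient, and with no proper derivative. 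The coordinates $\overline{g}^-_{i_{j+1}}, \dots, \overline{g}^-_{i_l}$ belong to complementary roots of height $\ge m_j$: those of strictly greater height are not yet involved at step $m_j$, while any of equal height is a distinct complementary basis vector fixed by the projection and so does not mix into the $X_{-\gamma_j}$-coefficient. I expect verifying these independence statements cleanly — rather than the existence of $u$ — to be the delicate part.

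Finally~\ref{claim_transstep3_c} follows formally. By~\ref{claim_transstep3_b} the map $(\overline{g}^-_{i_1}, \dots, \overline{g}^-_{i_l}) \mapsto (\overline{t}_1, \dots, \overline{t}_l)$ is unitriangular up to nonzero constants over $\algext_2$, namely $\overline{t}_j = c_j \overline{g}^-_{i_j} + R_j$ with $c_j \in \field^\times$ and $R_j$ a differential polynomial in $\overline{g}^-_{i_1}, \dots, \overline{g}^-_{i_{j-1}}$ and the generators of $\algext_2$. Solving recursively for $\overline{g}^-_{i_j}$ gives $\overline{g}^-_{i_j} \in \algext_2\langle \bt \rangle$, so $\algext_2\langle \bt \rangle = \algext_2\langle \overline{g}^-_{i_1}, \dots, \overline{g}^-_{i_l}\rangle = \algext$. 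Since all of $\bgm, \bgz$ are differentially algebraically independent over $\algext_1$ by Lemma~\ref{TransStep2}, the coordinates $\overline{g}^-_{i_1}, \dots, \overline{g}^-_{i_l}$ are differentially algebraically independent over $\algext_2$; as the $l$ elements $\bt$ generate the same extension of $\algext_2$, invariance of the differential transcendence degree forces $\bt$ to be differentially algebraically independent over $\algext_2$ as well.
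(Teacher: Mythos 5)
Your proof is correct, and for parts \ref{claim_transstep3_a} and \ref{claim_transstep3_b} it takes a genuinely more self-contained route than the paper. The paper simply invokes the Transformation Lemma \cite[Lemma~6.8]{Seiss} for existence and then re-inspects that lemma's stepwise proof --- successive gauge transformations by products of root group elements grouped by height, analyzed via Remarks~\ref{remark3} and \ref{remark4} --- to extract the triangularity statement \ref{claim_transstep3_b} by induction on the step $-k$. You instead reprove the transformation from scratch with a single $u = \exp(Y)$, $Y \in \mathfrak{u}^-(\algext_1\{\bgm,\bgz\})$, determining the graded components $Y^{(-d-1)}$ inductively from the height-graded refinement of $\mathfrak{b}^- = \ad(A_0^+)(\mathfrak{u}^-) \oplus \bigoplus_i \liealg_{-\gamma_i}$, with injectivity of $\ad(A_0^+)$ in negative degrees supplied by principal $\liesl_2$-theory (the centralizer of the regular nilpotent $A_0^+$ lives in the exponent degrees $m_1,\dots,m_l \ge 1$; in particular your degree-zero step correctly uses $\cartan = \ad(A_0^+)(\liealg^{(-1)})$, since no exponent vanishes). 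Your key bookkeeping is sound: the degree-$(-d)$ component of $\gauge{\exp(Y)}{(A_0^+ + L)}$ involves $Y^{(-d-1)}$ only through the linear term $-\ad(A_0^+)(Y^{(-d-1)})$, the raw coefficient $L^{(-d)}$ enters only through the identity ($k=0$) term of the exponential series, and derivatives arise only from $\dlog$ applied to $Y^{(-k)}$ with $k \le d$, which by your induction depend only on data of height $< k$; this yields $\overline{t}_j = \overline{g}^-_{i_j} + (\field\mbox{-linear combination of non-complementary } \overline{g}^-_i \mbox{ of height } m_j) + (\mbox{terms of height} < m_j)$, which is even slightly sharper than \ref{claim_transstep3_b}. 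You also correctly dispose of the delicate case of repeated exponents (e.g.\ type $D_l$ with $l$ even), where two complementary roots share a height: since $X_{-\gamma_{j'}}$ lies in the complement, the projection along $\ad(A_0^+)(\liealg^{(-m_j-1)})$ does not mix it into the $X_{-\gamma_j}$-coordinate --- a point the paper handles implicitly through the ordering of the complementary roots by height. Your part \ref{claim_transstep3_c} (recursive solving for $\overline{g}^-_{i_1},\dots,\overline{g}^-_{i_l}$, then invariance of the differential transcendence degree) is identical to the paper's. What each approach buys: the paper stays entirely within its established toolkit and is shorter; your version is independent of the internal structure of \cite[Lemma~6.8]{Seiss}, pins down $u$ uniquely in the normalized form $\exp(Y)$, and makes the height-triangularity transparent through the grading, at the cost of importing the principal $\liesl_2$ facts that the paper only uses implicitly via \cite[Lemma~6.4]{Seiss}.
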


\begin{proof}
It follows from \cite[Lemma~6.8]{Seiss} (Transformation Lemma) that the matrix 
$A(\bgp,\bgm,\bgz)$ is gauge equivalent 
by a unipotent matrix $u \in \unipotent^-(\algext_1 \{ \bgm,\bgz \})$
to a matrix  $A_{\group}(\bt)$ with $\bt=(\overline{t}_1,\dots,\overline{t}_l)$ in 
$\algext_1 \langle \bgm,\bgz \rangle^l $. More precisely, since  
$u$ is unipotent with entries in $\algext_1 \{ \bgm,\bgz \}$, the inverse of $u$ also has entries in 
$\algext_1 \{ \bgm,\bgz \}$ and so we have that 
$\overline{t}_i \in \algext_1 \{ \bgm,\bgz\}$ for all 
$1 \leq i \leq l$. This shows \ref{claim_transstep3_a}.

In the proof of the Transformation Lemma we gauge transform $A(\bgp,\bgm,\bgz)$ successively with products of root group elements corresponding to roots of height $-1,\dots,-m$. The effect on $A(\bgp,\bgm,\bgz)$ is that we successively eliminate all entries corresponding 
to roots of height $0,-1,-2,\dots,-(m-1)$ except the complementary roots.
In each step the parametrization of the product of root group elements of height $-k$ depends linearly on the entries which we are going to eliminate, i.e. the entries corresponding to the roots of height $-(k-1)$. The only non-zero entries in $A(\bgp,\bgm,\bgz)$ which  correspond to positive roots are the entries belonging to the simple roots. In each step the effect of the adjoint action coming from the positive simple roots is only the elimination described above and all other contributions of the adjoint action stem from entries belonging to the negative roots.
One proves now inductively on $-k=-1,\dots,-m$ the following claim: After performing step $-k$ an entry corresponding to a root of height 
$h$ does not involve indeterminates among the $\bgm$ corresponding to roots of height less than $h$ nor a proper derivative of 
$\overline{g}^-_h$, but the degree of this entry in $\overline{g}^-_h$ is equal to one and the coefficient is constant.
In each step of the induction one concludes first with Remark~\ref{remark3} and \ref{remark4} that to entries of height 
less than or equal to $-k$ only differential polynomials in indeterminates $\bgz$ and those of $\bgm$ which correspond to roots of height greater than $-k$ are added. Secondly one concludes with Remark~\ref{remark3} and \ref{remark4} that to entries of height 
greater than $-k$ nothing is added.
Since the complementary roots, which among themselves are ordered by height, are the only ones which are not eliminated we conclude statement~\ref{claim_transstep3_b}.
 
Next we prove that $\algext_2\langle \bt \rangle = \algext_1 \langle \bgz,\bgm \rangle = \algext$. We need to show 
that $\algext_2\langle \bt  \rangle$ contains all $\overline{g}^-_{i_1}, \dots, \overline{g}^-_{i_l}$. 
By statement~\ref{claim_transstep3_b} we can successively express the elements $\overline{g}^-_{i_1},\dots,\overline{g}^-_{i_l}$ in terms of $\bt$ and 
$$ \{ \bgz,\bgm \} \setminus \{ \overline{g}^-_{i_1},\dots,\overline{g}^-_{i_l} \}  $$
over $\algext_1$. Indeed, $\overline{t}_1$ depends linearly on $\overline{g}^-_{i_1}$ and does not involve any other indeterminate 
$\overline{g}^-_{i_2},\dots,\overline{g}^-_{i_l}$ nor any proper derivative of $\overline{g}^-_{i_1}$ and so we can express 
$\overline{g}^-_{i_1}$ as stated. The expression $\overline{t}_2$ does not involve any of the indeterminates $\overline{g}^-_{i_3},\dots,\overline{g}^-_{i_l}$ nor a 
derivative of $\overline{g}^-_{i_2}$ and $\overline{g}^-_{i_2}$ appears linearly in it. Using the expression of 
$\overline{g}^-_{i_1}$ obtained before we can write $\overline{g}^-_{i_2}$ as stated. 
Continuing in this way we get that all $\overline{g}^-_{i_1},\dots,\overline{g}^-_{i_l}$ are contained in $\algext_2\langle \bt  \rangle$.

Since $\bgz$, $\bgm$ are differentially algebraically independent over $\algext_1$ by Lemma~\ref{TransStep2} and we already proved that
$$ \algext_2\langle \bt \rangle \, = \, \algext_1\langle \bgz \rangle \langle \overline{g}^-_i \in \bgm \mid i \notin \{ i_1,\dots,i_l \}  \rangle \langle \bt \rangle   \, = \, \algext_1 \langle \bgz,\bgm \rangle\,, $$
it follows from the invariance of 
the differential transcendence degree that $\bt$ are differentially algebraically independent over $\algext_2$.
\end{proof}

In the proof of the following theorem we use Theorem~\ref{thm:difftransfullgroup} which will be presented and proved in the next section.
\begin{theorem}\label{thm:gauge_eq_&_full_group}
There is a differential extension field $\algext$ of 
$\field\langle \bap, \bam,\baz  \rangle$ with field of constants $\field$ 
such that 
the matrix $A(\bap, \bam,\baz )$ is 
gauge equivalent to $A_{\group}(\bt)$ over $\algext$ for some 
$\bt=(\overline{t}_1,\dots,\overline{t}_l)$ and the differential 
Galois group of $A_{\group}(\bt)$ over $\algext$ is the full group $\group(\field)$. 
\end{theorem}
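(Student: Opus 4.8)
The plan is to assemble the three transformation lemmas (Lemmas~\ref{TransStep1}, \ref{TransStep2}, \ref{TransStep3}) into the chain of gauge equivalences producing the normal form, verify that the field of constants is preserved throughout, and then invoke the Galois-group result of the next section. Concretely, I would first fix a resolving Weyl group element $w$ of $\weyl$, whose existence is guaranteed by Proposition~\ref{thm:existresolving}, together with a ranking adapted to the root system. By Theorem~\ref{thm:Sw_simple} the differential ideal $\ideal_w$ is prime, so the construction of $\resfield = \Frac(\field\{ \bap, \bam, \baz, \bb_w \}/\ideal_w)$ makes sense, and by Lemma~\ref{lem:fieldofconstants} its field of constants is $\field$. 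Under the identification of $\field\langle \bap, \bam, \baz \rangle$ with the differential subfield $\field\langle \bami, \bapi, \bazi \rangle$ of $\resfield$, the image of $A$ is $A(\bapi, \bami, \bazi)$, which is exactly the matrix fed into the three-step transformation.

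Next I would chain the three lemmas. Lemma~\ref{TransStep1} gauge transforms by $n(w)\boldsymbol{u}(\bbi_w)$ to reach $A(\bhp, \bhm, \bhz)$ over $\resfield$. Lemma~\ref{TransStep2} produces a finite algebraic extension $\algext$ of $\resfield$ and a torus element $t \in \torus(\algext)$ gauge transforming $A(\bhp, \bhm, \bhz)$ to $A(\bgp, \bgm, \bgz)$ with the first $l$ entries of $\bgp$ normalized to $1$. Lemma~\ref{TransStep3} then supplies a unipotent $u \in \unipotent^-(\algext_1\{ \bgm, \bgz \})$ realizing the final gauge equivalence $\gauge{u}{A(\bgp, \bgm, \bgz)} = A_{\group}(\bt)$. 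Composing these, $A(\bapi, \bami, \bazi)$ is gauge equivalent over $\algext$ to $A_{\group}(\bt)$ by the product of the three group elements, proving the gauge-equivalence assertion. Because $\algext$ is a \emph{finite algebraic} extension of $\resfield$ (only a logarithm was adjoined in the sense of adjoining the $\overline{x}_i$, which are algebraic over $\resfield$), adjoining algebraic elements cannot enlarge the field of constants beyond the algebraic closure of $\field$ in $\algext$; since $\field$ is algebraically closed and the constants of $\resfield$ are already $\field$, the constants of $\algext$ remain $\field$. I would spell out this last point carefully, as it is the place where preserving the constant field could fail.

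For the Galois group, the key is that by Lemma~\ref{TransStep3}\ref{claim_transstep3_c} we have $\algext_2\langle \bt \rangle = \algext$ with $\bt = (\overline{t}_1, \ldots, \overline{t}_l)$ differentially algebraically independent over $\algext_2$. This is precisely the hypothesis under which Theorem~\ref{thm:difftransfullgroup} applies: the normal form $A_{\group}(\bt)$, parametrized by differential indeterminates that are differentially transcendentally free over the base, defines a Picard-Vessiot extension with differential Galois group the full $\group(\field)$, even after enlarging the coefficient field from $\field\langle \bv \rangle$ to the larger field $\algext_2$. I would therefore invoke Theorem~\ref{thm:difftransfullgroup} with $\algext_2$ in the role of the enlarged coefficient field to conclude that the differential Galois group of $A_{\group}(\bt)$ over $\algext$ is $\group(\field)$. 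Since gauge-equivalent matrices define the same Picard-Vessiot extension and hence have the same differential Galois group, the same holds for $A(\bapi, \bami, \bazi)$, i.e.\ for the image of $A(\bap, \bam, \baz)$, over $\algext$.

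The main obstacle I anticipate is not any single computation but the bookkeeping needed to guarantee that the differential-algebraic-independence properties survive each transformation and align correctly with the hypotheses of Theorem~\ref{thm:difftransfullgroup}. Each lemma tracks a nested subfield ($\field\langle \bbi_w \rangle$, then $\algext_1$, then $\algext_2$) over which the relevant coefficients are differentially independent, and the invariance of differential transcendence degree is used repeatedly to transport independence across gauge transformations; I would need to check that $\algext_2$ as constructed is exactly the field over which Theorem~\ref{thm:difftransfullgroup} guarantees the full Galois group, rather than some intermediate field. The subtlety flagged in the introduction — that $\algext$ is \emph{not} purely differentially transcendental over $\field$ — means one cannot simply quote Juan's result; the genuine content is that the construction of \cite{Seiss}, suitably generalized in the next section, tolerates replacing $\field\langle \bv \rangle$ by $\algext_2$ without shrinking the Galois group, and this is exactly what Theorem~\ref{thm:difftransfullgroup} is set up to provide.
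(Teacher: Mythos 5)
Your proposal is correct and follows essentially the same route as the paper's own proof: it chains Lemmas~\ref{TransStep1}, \ref{TransStep2} and \ref{TransStep3} starting from $A(\bapi,\bami,\bazi)$ over $\resfield$, observes that the constants remain $\field$ (Lemma~\ref{lem:fieldofconstants} plus the fact that $\algext/\resfield$ is algebraic and $\field$ is algebraically closed), and then applies Theorem~\ref{thm:difftransfullgroup} with $\algext_2$ as coefficient field via $\algext_2\langle \bt \rangle = \algext$. The only cosmetic difference is that you attribute primality of $\ideal_w$ to Theorem~\ref{thm:Sw_simple} where the paper cites Lemma~\ref{lem:prime}, which is immaterial.
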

\begin{proof}
We obtain from Lemma~\ref{TransStep1} that the matrix 
$A( \bap, \bam,\baz)$ is gauge equivalent to 
$A(\bhp,\bhm,\bhz)$ over $\resfield$ where
$\bhp=(\overline{h}^+_1,\dots,\overline{h}^+_l,0,\dots,0)$ and the field of constants of $\resfield$ is $\field$ by 
Lemma~\ref{lem:fieldofconstants}. Next  
Lemma~\ref{TransStep2} implies that there is an algebraic extension $\algext$
of $\resfield$ such that $A(\bhp,\bhm,\bhz)$ is 
gauge equivalent to $A(\bgp,\bgm,\bgz)$ 
over $\algext$ where $\bgp=( 1,\dots,1,0,\dots,0)$.
Since $\field$ is algebraically closed the field of constants of $\algext$ is also $\field$.
Finally Lemma~\ref{TransStep3} implies that 
$A(\bgp,\bgm,\bgz)$  
is gauge equivalent to the matrix $A_{\group}(\bt)$ over $\algext$ and that there is a subfield 
$\algext_2$ of $\algext$ such that $\algext_2\langle \bt \rangle=\algext$ and 
the elements $\bt$ are differentially algebraically independent over $\algext_2$. 
Applying Theorem~\ref{thm:difftransfullgroup} to $A_{\group}(\bt)$ and 
$\algext_2\langle \bt \rangle=\algext$ we obtain that the differential Galois group of 
$A_{\group}(\bt)$ over $\algext$ is $\group(\field)$.
\end{proof}
\begin{remark}
Theorem~\ref{thm:gauge_eq_&_full_group} gives an alternative proof for the fact that the Galois group of 
$A(\bap, \bam,\baz )$ over 
$\field \langle \bap, \bam,\baz \rangle $ is $\group(\field)$ which was already proven by L.~Juan in \cite{Juan}.
Indeed, since $A(\bap, \bam,\baz )$ is gauge equivalent to 
$A_{\group}(\bt)$ over $\algext$, the Galois group of 
$A(\bap, \bam,\baz )$ over $\algext$ is also $\group(\field)$. Because 
$\group$ is an upper bound for the Galois group and $A(\bap, \bam,\baz)$ has 
entries in $\field \langle \bap, \bam,\baz  \rangle $, the Galois group of 
$A(\bap, \bam,\baz )$ over 
$\field \langle \bap, \bam,\baz  \rangle $ is $\group(\field)$ as desired.
\end{remark}

\section{The Galois Group of the Normal Form over $F\langle \bt \rangle$}\label{sec:NF_Galois}

Let $\difffield$ be an arbitrary differential field with field of constants $\field$. For differential indeterminates 
$\bt=(\overline{t}_1,\dots,\overline{t}_l)$ over $F$ we prove in this section that the Galois group of $A_{\group}(\bt)$ over $\difffield\langle \bt \rangle$ is $\group(\field)$ by adapting to $\difffield$ the construction presented in \cite{Seiss} over $\field$.
To this end let $\bv=(v_1,\dots,v_l)$ be differential indeterminates over $\difffield$. Then $\bv$ are differentially algebraically independent over $\field$.
The results of \cite{Seiss} can be summarized in the following proposition. 
\begin{proposition}\label{prop:generic_extension}
Let $n(\lweyl)$ be a representative of the longest Weyl group element in the normalizer of the torus. Then  
there are non-zero $\bc =(c_1,\dots,c_l)$ in $\field$ and polynomials 
$g_1(\bv) ,\dots, g_l(\bv) \in \field[\bv]$ as well as differential polynomials $f_{l+1}(\bv),\dots , f_m(\bv)$ and 
$s_{1}(\bv), \dots, s_{l}(\bv)$ in $\field\{ \bv \}$ having the following properties:
\begin{enumerate}
\item\label{propext:a}
The polynomials $g_1(\bv),\dots, g_l(\bv)$ are $\field$-linearly independent and homogeneous of degree one.
\item\label{propext:b}
Let $I_{\bs} \subset \field\{ \bv \}$ be the differential ideal generated by $\bs=(s_{1}(\bv), \dots, s_{l}(\bv))$.
Then the differential ring $\field\{ \bv \}/ I_{\bs} $ is isomorphic to a polynomial ring in finitely many variables which can be chosen to be the images of $v_1,\dots,v_l$ and finitely many 
of their derivatives.
The differential polynomials $\bs$ are differentially algebraically independent over $\field$.
\item\label{propext:c}
The matrix 
\begin{equation*}
\ALiou(\bv) \, = \, \sum_{i=1}^l g_i(\bv) H_i + \sum_{i=1}^l c_i X_{-\alpha_i}  
 \in \mathfrak{b}^-(\field\langle \bv \rangle)
\end{equation*}
defines a Picard-Vessiot extension $\generalext$ of $\field\langle \bv \rangle$ with differential Galois group $\borel^-(\field)$ and has a fundamental matrix 
$\YLiou \in \borel^-(\generalext)$. 
\item\label{propext:d}
The logarithmic derivative of 
$$Y=\boldsymbol{u}\big( \bv,f_{l+1}(\bv),\dots, f_m(\bv)\big) \,  n(\lweyl) \, \YLiou$$
is $A_{\group}(\bs)$ and $\generalext = \field\langle \bv \rangle(\YLiou)$ is a Picard-Vessiot extension of $\field\langle \bs \rangle$ for $A_{\group}(\bs)$ 
with differential Galois group  $\group(\field)$.
 \end{enumerate}
\end{proposition}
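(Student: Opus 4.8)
The plan is to assemble the four statements from the construction carried out in \cite{Seiss} (refined in \cite{Seiss_Generic}), indicating how each is established there. First I would fix the data: the nonzero constants $\bc$ attached to the simple negative root vectors $X_{-\alpha_i}$, and $l$ linearly independent homogeneous linear forms $g_1(\bv),\dots,g_l(\bv)$ in the differential indeterminates $\bv$ (so that \ref{propext:a} is immediate). Their precise choice is governed by the requirement in \ref{propext:d} that the gauge transform of $\ALiou(\bv)$ by $\boldsymbol{u}\,n(\lweyl)$ land in normal form.

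The core is the Galois-group computation in \ref{propext:c}. Since $\ALiou(\bv)$ lies in $\mathfrak{b}^-(\field\langle\bv\rangle)$, the differential Galois group of $\generalext/\field\langle\bv\rangle$ is a subgroup of $\borel^-(\field)=\torus(\field)\ltimes\unipotent^-(\field)$, and the task is to prove equality. I would treat the torus and unipotent parts separately. The diagonal of $\ALiou$ is $\sum_i g_i(\bv) H_i$, so the toric part of a fundamental matrix is built from exponentials of integrals of the $g_i$; because the $g_i$ are $\field$-linearly independent degree-one forms in the differential indeterminates $\bv$, no nontrivial $\Z$-linear combination $\sum_i n_i g_i$ is a logarithmic derivative of an element of $\field\langle\bv\rangle$, and by Kolchin's criterion for diagonal systems this forces the toric quotient of the Galois group to be all of $\torus(\field)$. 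For the unipotent radical one reads off the root-height filtration of $\mathfrak{u}^-$: the iterated integrals produced by the $c_i X_{-\alpha_i}$ together with the higher root directions generated along this filtration remain independent over the field generated by the toric part, so the unipotent part of the Galois group is all of $\unipotent^-(\field)$. The fundamental matrix $\YLiou\in\borel^-(\generalext)$ is then obtained directly.

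Next I would carry out the gauge transformation of \ref{propext:d}. Writing $Y=\boldsymbol{u}\big(\bv,f_{l+1}(\bv),\dots,f_m(\bv)\big)\,n(\lweyl)\,\YLiou$ and $\dlog(Y)=\Ad(\boldsymbol{u}\,n(\lweyl))(\ALiou)+\dlog(\boldsymbol{u}\,n(\lweyl))$, the conjugation by $n(\lweyl)$ carries the simple negative root directions of $\ALiou$ to the simple positive ones (producing $A_0^+$), while the unipotent factor $\boldsymbol{u}$ is chosen so that its adjoint and logarithmic-derivative contributions cancel every off-diagonal entry except those along the complementary roots $-\gamma_1,\dots,-\gamma_l$ of Definition~\ref{de:nf}. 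This is exactly the Transformation Lemma \cite[Lemma~6.8]{Seiss} already invoked in Lemma~\ref{TransStep3}, and it \emph{defines} the differential polynomials $s_1(\bv),\dots,s_l(\bv)$ as the surviving coefficients of $X_{-\gamma_1},\dots,X_{-\gamma_l}$, so that $\dlog(Y)=A_{\group}(\bs)$. For \ref{propext:b} I would track leaders: each $s_i$ carries $v_i$ (through $g_i$) together with strictly lower-ranked data, so the relations $\bs=\mathbf{0}$ can be solved triangularly, exhibiting $\field\{\bv\}/I_{\bs}$ as a polynomial ring in the images of $v_1,\dots,v_l$ and finitely many of their derivatives and yielding the differential algebraic independence of $\bs$.

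Finally, for the group $\group(\field)$ over $\field\langle\bs\rangle$ in \ref{propext:d}, I would use the $\group(\field)$-action on $\generalext$ coming from right multiplication of $Y$ followed by Bruhat decomposition, as sketched in the introduction: this action is by differential automorphisms, introduces no new constants, and its fixed field is precisely $\field\langle\bs\rangle$, whence $\generalext/\field\langle\bs\rangle$ is Picard--Vessiot with group $\group(\field)$. The main obstacle throughout is this last identification---above all the verification that the fixed field of the $\group(\field)$-action is exactly $\field\langle\bs\rangle$ and no larger; this is the substantive theorem of \cite{Seiss} that makes the construction generic, whereas the remaining assertions reduce to bookkeeping with the root-height filtration and the Chevalley relations of Remark~\ref{remark3}.
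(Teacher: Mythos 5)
Your proposal is correct and follows essentially the same route as the paper: the proposition is a summary of the construction in \cite{Seiss}, and you assemble it from the same ingredients the paper cites --- the choice of $\bc$ and the linear forms $g_i(\bv)$, the Borel-group criterion for $\ALiou(\bv)$ (the torus/unipotent-abelianization conditions you sketch are exactly Kovacic's theorem, which the paper itself invokes in Proposition~\ref{prop:Liouville_part}), the Transformation Lemma producing $A_{\group}(\bs)$, the triangular solvability of the $\bs$-relations for part \ref{propext:b}, and the Bruhat-decomposition fixed-field theorem of \cite{Seiss} for the full group $\group(\field)$. The only cosmetic deviation is that the leaders in part \ref{propext:b} are derivatives $v_{k_i}^{(d_i)}$ of pairwise distinct indeterminates rather than the $v_i$ themselves, but this does not affect your triangular argument.
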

\begin{proof}
\begin{enumerate}
    \item The $g_i(\bv)$ correspond to the $\overline{g}_i(\boldsymbol{\eta})$ defined on page~16 in \cite{Seiss} and the desired properties follow from Lemma~5.3 (a) of \cite{Seiss}.
    \item The $s_i(\bv)$ correspond to the $h_{j_i}(\boldsymbol{\eta})$ defined in Lemma~5.11 of \cite{Seiss} and so Theorem~5.13 of \cite{Seiss} yields that $s_1(\bv),\dots, s_l(\bv)$ are differentially algebraically independent over $\field$. 
    
    The properties of $h_{j_1}(\boldsymbol{\eta}),\dots,h_{j_l}(\boldsymbol{\eta})$ stated in \cite[Lemma~5.11]{Seiss} imply 
    that the differential ideal $I_{\bs}$ is also generated by $l$ differential polynomials $\tilde{s}_1(\bv),\dots,\tilde{s}_l(\bv)$ with the following properties:
    \begin{enumerate}
        \item Each $\tilde{s}_i(\bv)$ can be solved for some $v_{k_i}^{(d_i)}$, where $d_i \in \Z_{\geq 0}$.
        \item The differential indeterminate $v_{k_i}$ appears in the remaining terms of $\tilde{s}_i(\bv)$ with differentiation order lower than $d_i$. 
        \item The differential indeterminates $v_{k_1},\dots ,v_{k_l}$ are pairwise distinct.
    \end{enumerate}
    We conclude that $\field\{ \bv \}/ I_{\bs}$ is isomorphic to a polynomial ring in finitely many variables, which we choose to be the images of 
    the indeterminates $v_{k_i},v_{k_i}',\dots, v_{k_i}^{(d_i-1)}$ with $1\leq i \leq l$.
    \item This corresponds to Proposition 5.5 (a) and (b) in \cite{Seiss}.
    \item This corresponds to Theorem~5.13 in \cite{Seiss}.
    \endproof
\end{enumerate}  
\end{proof}

\begin{corollary} $\,$ \label{cor:properties_of_s}
We continue to use the notation of Proposition \ref{prop:generic_extension}.
\begin{enumerate}
\item\label{corprop:a}
Let $I_{\bs} \subset \difffield\{ \bv \}$ be the differential ideal generated by $\bs$.
Then the differential ring 
$\difffield\{ \bv \}/ I_{\bs} $ is isomorphic to a polynomial ring in $m$ indeterminates which can be chosen to be the images of 
$v_1,\dots,v_l$ and finitely many of their derivatives. 
\item\label{corprop:b}
The differential polynomials $\bs$ are differentially algebraically independent over $\difffield$.
\end{enumerate}
\end{corollary}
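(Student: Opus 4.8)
The plan is to deduce both statements from the corresponding assertions over $\field$ in Proposition~\ref{prop:generic_extension}(b) by a flat base change along the inclusion $\field \hookrightarrow \difffield$. The essential observation is that all relevant data are already defined over $\field$: the differential polynomials $\bs$ (and the triangular generators $\tilde{s}_1,\dots,\tilde{s}_l$ of $I_{\bs}$ from the proof of Proposition~\ref{prop:generic_extension}) lie in $\field\{\bv\}$. Since $\difffield$ is a $\field$-vector space, it is free, hence flat, as a $\field$-module, and one has the identifications $\difffield\{\bv\} = \difffield \otimes_{\field} \field\{\bv\}$ and $\difffield\{\mathbf{Z}\} = \difffield \otimes_{\field} \field\{\mathbf{Z}\}$ of differential rings, where $\mathbf{Z}=(Z_1,\dots,Z_l)$.

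For part~\ref{corprop:b} I would argue as follows. Differential algebraic independence of $\bs$ over $\field$ is equivalent to injectivity of the $\field$-algebra homomorphism $\phi\colon \field\{\mathbf{Z}\} \to \field\{\bv\}$, $Z_i \mapsto s_i$, which holds by Proposition~\ref{prop:generic_extension}(b). Tensoring $\phi$ with the flat module $\difffield$ preserves injectivity, and since $\phi \otimes_{\field} \mathrm{id}_{\difffield}$ is precisely the map $\difffield\{\mathbf{Z}\} \to \difffield\{\bv\}$, $Z_i \mapsto s_i$, its injectivity yields differential algebraic independence of $\bs$ over $\difffield$. This direction uses nothing beyond flatness; the nontrivial implication ``independent over $\field$ $\Rightarrow$ independent over $\difffield$'' is exactly what the base change supplies.

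For part~\ref{corprop:a} the first step is to check that forming the differential ideal commutes with the base change, i.e.\ that $I_{\bs} \subset \difffield\{\bv\}$ coincides with $\difffield \cdot I_{\bs}^{\field}$, the $\difffield$-submodule of $\difffield\{\bv\}$ generated by the differential ideal $I_{\bs}^{\field} \subset \field\{\bv\}$. Here the hypothesis that the generators have coefficients in $\field$ enters: every derivative $\theta(s_i)$ already lies in $I_{\bs}^{\field}$, and since $\difffield \cdot \field\{\bv\} = \difffield\{\bv\}$ one obtains $I_{\bs} = \sum_{i,\theta}\difffield\{\bv\}\,\theta(s_i) = \difffield \cdot I_{\bs}^{\field}$. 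Flatness of $\difffield$ then turns the short exact sequence $0 \to I_{\bs}^{\field} \to \field\{\bv\} \to \field\{\bv\}/I_{\bs}^{\field} \to 0$ into the isomorphism
\[
\difffield\{\bv\}/I_{\bs} \;\cong\; \difffield \otimes_{\field} \bigl(\field\{\bv\}/I_{\bs}^{\field}\bigr).
\]
By Proposition~\ref{prop:generic_extension}(b) the right-hand tensor factor is a polynomial ring over $\field$ in the images of $v_1,\dots,v_l$ and finitely many of their derivatives, so the left-hand side is the corresponding polynomial ring over $\difffield$ in the same variables, of which there are $m$.

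The only step requiring genuine care, and thus the main obstacle, is the commutation $I_{\bs} = \difffield \cdot I_{\bs}^{\field}$ in part~\ref{corprop:a}: a priori the differential ideal generated by $\bs$ over $\difffield$ could be strictly larger than the extension of the one generated over $\field$, and it is precisely the fact that $\bs$ is defined over $\field$ that excludes this. Everything else is formal once the flatness of $\difffield/\field$ is invoked; in particular, the argument makes no use of the hypothesis that $\difffield$ and $\field$ have the same field of constants.
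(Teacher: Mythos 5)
Your base-change machinery is essentially sound, and in places cleaner than the paper's own argument: the identification $\difffield\{\bv\} \cong \difffield \otimes_{\field} \field\{\bv\}$, the equality $I_{\bs} = \difffield\{\bv\}\cdot I_{\bs}^{\field}$ (immediate because the generators $s_i$ and hence all their derivatives lie in $\field\{\bv\}$), and the flatness argument all work. Your proof of part \ref{corprop:b} — injectivity of $Z_i \mapsto s_i$ over $\field$, preserved under the flat extension $\field \hookrightarrow \difffield$ — is correct and is genuinely different from the paper's route, which instead deduces \ref{corprop:b} from \ref{corprop:a}: finiteness of $\trdeg_{\difffield\langle \bs \rangle}\difffield\langle \bv \rangle$ forces the differential transcendence degree of $\difffield\langle \bv \rangle$ over $\difffield\langle \bs \rangle$ to vanish, whence $\difffield\langle \bs \rangle$ has differential transcendence degree $l$ over $\difffield$ by Kolchin, and the $l$ generators $\bs$ must be differentially independent. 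For the isomorphism in part \ref{corprop:a} the paper reaches the same conclusion $\difffield\{\bv\}/I_{\bs} \cong \field\{\bv\}/I_{\bs} \otimes_{\field} \difffield \cong \difffield[\bz]$ by rerunning the triangular-generator argument from the proof of Proposition~\ref{prop:generic_extension}~\ref{propext:b} over $\difffield$ rather than by invoking flatness; up to packaging, that much of your proof matches the paper.

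The genuine gap is the final count of indeterminates. Proposition~\ref{prop:generic_extension}~\ref{propext:b} asserts only that $\field\{\bv\}/I_{\bs}$ is a polynomial ring in \emph{finitely many} variables (in its proof, $k = d_1 + \ldots + d_l$ of them); it nowhere states that this number is $m$, and nothing in a flat base change can determine it — base change preserves the number of variables, whatever it is. Since the corollary claims exactly $m$ indeterminates, the equality $k = m$ requires a separate argument, and it is not formal: the paper derives it from the Picard--Vessiot data in parts \ref{propext:c} and \ref{propext:d} of the proposition, namely $\trdeg_{\field\langle \bv \rangle}(\generalext) = \dim(\borel^-) = m + l$ and $\trdeg_{\field\langle \bs \rangle}(\generalext) = \dim(\group) = 2m + l$, which give $\trdeg_{\field\langle \bs \rangle}(\field\langle \bv \rangle) = m$ and hence $k = m$. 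Your closing remark that the argument uses no Galois-theoretic input is precisely the symptom of the omission: the number $m = |\roots^+|$ enters only through $\dim(\borel^-)$ and $\dim(\group)$, and this count matters downstream (the proof of Theorem~\ref{thm:difftransfullgroup} uses $\trdeg_{\difffield\langle \bs \rangle}(\difffield\langle \bv \rangle) = m$ to identify the Galois group). As written, your proposal establishes part \ref{corprop:a} only with ``finitely many'' in place of ``$m$''.
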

\begin{proof}
By Proposition~\ref{prop:generic_extension} \ref{propext:b} the quotient  
$\field\{ \bv \}/I_{\bs}$ is isomorphic to a polynomial ring $\field[\bz]$, where the indeterminates $\bz=(z_1,\dots,z_k)$ are the images of $\bv$ and finitely many of their
derivatives, $k = d_1 + d_2 + \ldots + d_l$.
Since $\bv$ are differential indeterminates over $\difffield$, the elements of $$\{ v_i^{(j)} \mid 1\leq i \leq l, \, j \in \Z_{\geq 0} \}$$
are algebraically independent over $\difffield$. The same reasoning as in the proof of Proposition~\ref{prop:generic_extension} \ref{propext:b} shows that the images $\bz$ in $\difffield\{ \bv \}/I_{\bs}$ of the same $v_{k_i}, v_{k_i}',\dots, v_{k_i}^{(d_i-1)}$ with $1\leq i \leq l$ are algebraically independent over $\difffield$. The algebraic independence of $\bv$ and $\bz$ implies the isomorphisms 
$$
\difffield\{ \bv \}/I_{\bs} \, \cong \, \field\{ \bv \}/I_{\bs} \otimes_{\field} \difffield \, \cong \, \field[\bz] \otimes_{\field} \difffield \, \cong \,  \difffield[\bz] .$$

We prove that $k=m$. Again by Proposition~\ref{prop:generic_extension} \ref{propext:b} the transcendence degree of $\field\langle \bv \rangle$ over $\field\langle \bs \rangle$ is $k$. From Proposition~\ref{prop:generic_extension} \ref{propext:c} and \ref{propext:d} we obtain
$$\mathrm{trdeg}_{\field\langle \bv \rangle}(\generalext) = \dim(\borel^-)=l+m$$ and
$$\mathrm{trdeg}_{\field\langle \bs \rangle}(\generalext) = \dim(\group)=l+2m.$$
We conclude that $\mathrm{trdeg}_{\field\langle \bs \rangle}(\field\langle \bv \rangle)=m$ and so $k=m$. This proves \ref{corprop:a}.

Since the transcendence degree of $\difffield\langle \bv \rangle$ over $\difffield\langle \bs \rangle$ is finite, the differential transcendence degree of $\difffield\langle \bv \rangle$ over $\difffield\langle \bs \rangle$ is zero and so 
the differential transcendence degree of $\difffield\langle \bs \rangle$ over $\difffield$ is $l$ by \cite[II.9, Corollary~2]{Kolchin}. As $\difffield\langle \bs \rangle$ is generated by the $l$ elements $\bs$ over $\difffield$, they are differentially algebraically independent over $\difffield$.
\end{proof}

\begin{proposition} \label{prop:Liouville_part}
There is a Picard-Vessiot extension $\widehat{\generalext}$ of $\difffield \langle \bv \rangle$ for 
$\ALiou(\bv)$ with fundamental matrix $\calYLiou \in \borel^-(\widehat{\generalext})$. The differential Galois group of 
$\widehat{\generalext}$ over $\difffield \langle \bv \rangle$ is $\borel^-(\field)$.
\end{proposition}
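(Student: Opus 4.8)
The plan is to obtain $\widehat{\generalext}$ from the already established extension $\generalext/\field\langle \bv \rangle$ of Proposition~\ref{prop:generic_extension} \ref{propext:c} by base change along the inclusion $\field\langle \bv \rangle \hookrightarrow \difffield\langle \bv \rangle$, and to argue that the Galois group is preserved. First I would record that the field of constants of $\difffield\langle \bv \rangle$ is $\field$: since $\bv$ are differential indeterminates over $\difffield$ and the constants of $\difffield$ are $\field$, no new constants arise, by the usual highest-derivative argument (a constant of $\difffield\langle \bv \rangle$ cannot involve any $v_i^{(k)}$ and hence lies in $\difffield$). Thus $\difffield\langle \bv \rangle$ is an admissible base for Picard--Vessiot theory with the same field of constants as $\field\langle \bv \rangle$.

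The key step is that $\generalext/\field\langle \bv \rangle$ is a \emph{regular} field extension. In characteristic zero separability is automatic, so regularity is equivalent to $\field\langle \bv \rangle$ being relatively algebraically closed in $\generalext$. This holds because the differential Galois group $\borel^-(\field)$ is connected --- $\borel^-$ is a Borel subgroup of the connected group $\group$ --- and for a Picard--Vessiot extension the relative algebraic closure of the base in the extension is the fixed field of the identity component of the Galois group (cf.\ \cite{Magid}), hence equals $\field\langle \bv \rangle$ here. Consequently $\generalext \otimes_{\field\langle \bv \rangle} \difffield\langle \bv \rangle$ is an integral domain; I let $\widehat{\generalext}$ be the fraction field of this differential domain, with the induced derivation, so that $\widehat{\generalext}$ is the compositum $\generalext \cdot \difffield\langle \bv \rangle$ and contains the fundamental matrix $\YLiou \in \borel^-(\generalext)$.

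Now I would invoke the base-change theorem for Picard--Vessiot extensions (cf.\ \cite{Magid}): since $\generalext/\field\langle \bv \rangle$ is a Picard--Vessiot extension with constants $\field$ and $\difffield\langle \bv \rangle$ has the same constants, the compositum $\widehat{\generalext} = \generalext \cdot \difffield\langle \bv \rangle$ again has field of constants $\field$ and is a Picard--Vessiot extension of $\difffield\langle \bv \rangle$ for $\ALiou(\bv)$, with fundamental matrix $\calYLiou := \YLiou \in \borel^-(\widehat{\generalext})$. Its differential Galois group is isomorphic to the Galois group of $\generalext$ over $\generalext \cap \difffield\langle \bv \rangle$; by regularity $\generalext$ and $\difffield\langle \bv \rangle$ are linearly disjoint over $\field\langle \bv \rangle$, so $\generalext \cap \difffield\langle \bv \rangle = \field\langle \bv \rangle$ and this group is all of $\borel^-(\field)$.

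The main obstacle is precisely this preservation of the Galois group when enlarging the base from $\field\langle \bv \rangle$ to $\difffield\langle \bv \rangle$, i.e.\ ruling out that $\difffield$ accidentally supplies some of the Liouvillian generators; the regularity argument settles it. Alternatively, matching the strategy of re-running the construction of \cite{Seiss} directly over $\difffield$, one may build $\calYLiou$ explicitly: since $\ALiou(\bv) \in \mathfrak{b}^-$, solve the diagonal part by adjoining $\eta_i$ with $\eta_i'/\eta_i = g_i(\bv)$, and then integrate successively along the height filtration of $\unipotent^-$ to obtain the strictly lower entries. The transcendence degree of $\widehat{\generalext}$ over $\difffield\langle \bv \rangle$ then equals $\dim \borel^- = l+m$ by Kolchin's criteria for Liouvillian extensions (cf.\ \cite{Kolchin}): the $g_i(\bv)$ are $\field$-linearly independent by Proposition~\ref{prop:generic_extension} \ref{propext:a}, and, $\bv$ being differential indeterminates over $\difffield$, no nonzero linear form in $\bv$ is a logarithmic derivative or a derivative of an element of $\difffield\langle \bv \rangle$. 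Since $\borel^-$ is connected, a subgroup of the full dimension $l+m$ must be all of $\borel^-(\field)$.
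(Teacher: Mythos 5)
Your existence step is harmless, but the core of your argument --- that base change from $\field\langle\bv\rangle$ to $\difffield\langle\bv\rangle$ automatically preserves the constants and hence the Galois group --- has a genuine gap. Regularity of $\generalext/\field\langle\bv\rangle$ (correctly deduced from connectedness of $\borel^-$) guarantees only that $\generalext\otimes_{\field\langle\bv\rangle}\difffield\langle\bv\rangle$ is a domain; it does \emph{not} guarantee that the fraction field has field of constants $\field$, and without that the compositum is not a Picard--Vessiot extension and the isomorphism $\mathrm{Gal}\bigl(\generalext\cdot\difffield\langle\bv\rangle/\difffield\langle\bv\rangle\bigr)\cong\mathrm{Gal}\bigl(\generalext/\generalext\cap\difffield\langle\bv\rangle\bigr)$ is unavailable --- every version of that compositum theorem \emph{assumes} the compositum has constants $\field$. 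Concretely, take $F=\field(z)$ with $z'=1$ and $E=F'=F(e^z)$: then $E/F$ is Picard--Vessiot with connected group $\mathbb{G}_m$, hence regular, $E\otimes_F F'$ is a domain, and $E$, $F'$ are linearly disjoint over $F$ inside its fraction field; yet $(e^z\otimes 1)/(1\otimes e^z)$ is a new constant and the true Galois group over $F'$ is trivial, not $\mathbb{G}_m$. Your argument, applied verbatim to this example, yields a false conclusion, so ``the group does not shrink under the enlargement of the base'' is exactly what must be proved here, not what a general base-change principle supplies. What saves the present situation is special to it: since $\bv$ are differential indeterminates over $\difffield$, no nontrivial $\Z$-linear combination of $g_1(\bv),\dots,g_l(\bv)$ is a logarithmic derivative in $\difffield\langle\bv\rangle$, and the nonzero constants $c_i$ do not vanish in the relevant quotients.

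This is in fact how the paper argues, and it never base-changes $\generalext$ at all: it builds $\widehat{\generalext}$ directly over $\difffield\langle\bv\rangle$ by extending the derivation to $\difffield\langle\bv\rangle[\GL_n]$ via $X'=\ALiou(\bv)X$, noting that the ideal of $\borel^-$ is differential (as $\ALiou(\bv)\in\mathfrak{b}^-$) and passing to a maximal differential ideal containing it --- this is what secures constants $\field$ and a fundamental matrix in $\borel^-(\widehat{\generalext})$ --- and then it identifies the group as all of $\borel^-(\field)$ by Kovacic's theorem on the inverse problem for solvable groups, verifying over $\difffield\langle\bv\rangle$ precisely the two conditions above (the $g_i(\bv)$ are $\Z$-independent modulo $\dlog(\difffield\langle\bv\rangle)$, and $c_i\notin L_{g_i(\bv),-\alpha_i}(\difffield\langle\bv\rangle)$). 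Your closing ``alternative'' sketch is much closer to this and contains the right ingredient, but it too under-proves the unipotent part: the successive integrands along the height filtration of $\unipotent^-$ are not linear forms in $\bv$ --- they involve the previously adjoined Liouvillian elements --- so the claimed transcendence degree $m$ for the unipotent contribution does not follow from the fact you cite; controlling exactly this is what Kovacic's criterion packages. To repair your proof, either verify the Kovacic conditions as the paper does, or prove directly that $\Frac\bigl(\generalext\otimes_{\field\langle\bv\rangle}\difffield\langle\bv\rangle\bigr)$ has constants $\field$; with that established, your compositum argument would indeed deliver the full group $\borel^-(\field)$.
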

\begin{proof}
First we construct a Picard-Vessiot ring for $\ALiou(\bv)$ in the usual way.
We extend the derivation of $\difffield\langle \bv \rangle $ to the ring
$$\difffield\langle \bv \rangle [\GL_n]=\difffield\langle \bv \rangle[X,\det(X)^{-1}]$$
by defining $X'=\ALiou(\bv)X$, where $X=(X_{ij})$ is a matrix of $n^2$ indeterminates $X_{ij}$. Since $\ALiou(\bv) \in \mathfrak{b}^-(\difffield\langle \bv \rangle)$, the embedding $(I)$ of the defining ideal 
$I\subset \field[\GL_n]$ of the Borel group $\borel^-$ is a differential ideal in $\difffield\langle \bv \rangle [\GL_n]$. Let $J \subset \difffield\langle \bv \rangle [\GL_n]$ be a  maximal differential ideal with $(I)\subseteq J$.
Then the field of fractions $\widehat{\generalext}$
of $\difffield\langle \bv \rangle[\GL_n] / J$
is a Picard-Vessiot extension of $\difffield\langle \bv \rangle$ and the matrix of residue classes $\calYLiou:=\overline{X}$ is a fundamental matrix contained in 
$\borel^-(\widehat{\generalext})$.

It is left to show that the Galois group of $\widehat{\generalext}$ over $\difffield\langle \bv \rangle$ is $\borel^-(\field)$. To this end we apply \cite[Theorem 2]{KovacicInvProb} to $\ALiou(\bv)$. Clearly we have the semi-direct product $B^-=\torus \, \unipotent^-$. The roots and root groups
of the commutative unipotent group $\unipotent^-/[\unipotent^-,\unipotent^-]$ with respect to $\torus$ correspond to the negative simple roots $-\alpha_1,\dots,-\alpha_l$ and the root groups $U_{-\alpha_1},\dots,U_{-\alpha_l}$ of $\unipotent^-$. Hence, the dimension of each root group of $\unipotent^-/[\unipotent^-,\unipotent^-]$ is one. Let $\chi_1,\dots,\chi_l$
be the generators of the character group of $T$ and denote by $e_{ij} \in \Z$ the exponent of $\chi_j$ when writing $-\alpha_i$ as a 
product of $\chi_1,\dots,\chi_l$. As in \cite[p.~596]{KovacicInvProb} we define the map 
$$ L_{g_i(\bv),-\alpha_i}: \difffield\langle \bv \rangle \longrightarrow \difffield\langle \bv \rangle, \quad x \longmapsto x' - \sum_{j=1}^l e_{ij} g_j(\bv) x, $$
where $g_1(\bv)$, \ldots, $g_l(\bv)$ have been defined in Proposition~\ref{prop:generic_extension}, namely by
$$\ALiou(\bv) \, = \, \sum_{i=1}^l g_i(\bv) H_i + \sum_{i=1}^l c_i X_{-\alpha_i}. $$
Now Theorem~2 of \cite{KovacicInvProb} implies that the differential Galois group of $\ALiou(\bv)$
is $\borel^-(\field)$, if the images of $g_1(\bv),\dots,g_l(\bv)$ in 
the quotient $\difffield\langle \bv \rangle/\dlog (\difffield\langle \bv \rangle)$ are $\Z$-linearly independent, i.e.\ form a $\Z$-submodule of rank $l$ and if $c_i$ does not 
reduce to zero in the quotient $\difffield\langle \bv \rangle/  L_{g_i(\bv),-\alpha_i} (\difffield\langle \bv \rangle)$ for all $1\leq i \leq l$, i.e., the image of every $c_i$ generates a $\field$-vector space of dimension one.
By Proposition~\ref{prop:generic_extension} the elements $\bc$ are non-zero constants and $g_1(\bv),\dots,g_l(\bv)$ are $\field$-linearly independent homogeneous polynomials of degree one. We conclude that $\bc$ and $g_1(\bv),\dots,g_l(\bv)$ satisfy the conditions of the above mentioned theorem and so the differential Galois group of $\widehat{\generalext}$ over $\difffield\langle \bv \rangle$ is $\borel^-(\field)$.   
\end{proof}

\begin{theorem}\label{thm:difftransfullgroup}
Let $\difffield$ be a differential field with field of constants $\field$ and let $\bt=(t_1,\dots,t_l)$ be differential indeterminates
over $\difffield$. Then the differential Galois group of $A_{\group}(\bt)$ over $\difffield\langle \bt \rangle$ is $\group(\field)$.
\end{theorem}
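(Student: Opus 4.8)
The plan is to reduce the assertion about $\bt$ to the corresponding one about $\bs$, and then to run the construction of \cite{Seiss} with the base field $\field$ replaced by $\difffield$, the required inputs being supplied by Corollary~\ref{cor:properties_of_s} and Proposition~\ref{prop:Liouville_part}. First I would note that, by Corollary~\ref{cor:properties_of_s}\ref{corprop:b}, the differential polynomials $\bs=(s_1(\bv),\dots,s_l(\bv))$ are differentially algebraically independent over $\difffield$, so $\difffield\langle\bs\rangle$ is, like $\difffield\langle\bt\rangle$, purely differentially transcendental over $\difffield$ of degree $l$. Hence $t_i\mapsto s_i(\bv)$ extends to a differential $\difffield$-isomorphism $\difffield\langle\bt\rangle\xrightarrow{\sim}\difffield\langle\bs\rangle$ taking $A_{\group}(\bt)$ to $A_{\group}(\bs)$, and since such an isomorphism preserves differential Galois groups it suffices to show that the differential Galois group of $A_{\group}(\bs)$ over $\difffield\langle\bs\rangle$ is $\group(\field)$.

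Next I would produce a fundamental matrix. Proposition~\ref{prop:Liouville_part} furnishes a Picard--Vessiot extension $\widehat{\generalext}$ of $\difffield\langle\bv\rangle$ for $\ALiou(\bv)$ with differential Galois group $\borel^-(\field)$ and fundamental matrix $\calYLiou\in\borel^-(\widehat{\generalext})$. Setting $Y=\boldsymbol{u}(\bv,f_{l+1}(\bv),\dots,f_m(\bv))\,n(\lweyl)\,\calYLiou$ and repeating the computation behind Proposition~\ref{prop:generic_extension}\ref{propext:d} verbatim (it uses only the Chevalley basis, the $\field$-data $f_i,g_i,s_i,c_i$, and the relation $\dlog(\calYLiou)=\ALiou(\bv)$) gives $\dlog(Y)=A_{\group}(\bs)$, so $Y$ is a fundamental matrix for $A_{\group}(\bs)$ lying in $\widehat{\generalext}$. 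The field of constants of $\widehat{\generalext}$ is $\field$, because $\widehat{\generalext}$ is Picard--Vessiot over $\difffield\langle\bv\rangle$ and the latter has field of constants $\field$ (as $\difffield$ does and $\bv$ are differential indeterminates over $\difffield$).

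To identify $\widehat{\generalext}$ with the Picard--Vessiot extension of $\difffield\langle\bs\rangle$ generated by $Y$, I would appeal to the uniqueness and rationality of the Bruhat decomposition (Theorem~\ref{Bruhat1}): the factorization $Y=\boldsymbol{u}(\bv,\dots)\,n(\lweyl)\,\calYLiou$ is of Bruhat type, so its factors are recovered from the entries of $Y$ by rational operations defined over $\field$; reading $\bv$ off from $\boldsymbol{u}$ and the entries of $\calYLiou$ off from the Borel factor shows $\difffield\langle\bv\rangle\subseteq\difffield\langle\bs\rangle(Y)$ and $\calYLiou\in\group(\difffield\langle\bs\rangle(Y))$, whence $\widehat{\generalext}=\difffield\langle\bs\rangle(Y)$. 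Writing $H$ for the differential Galois group of $\widehat{\generalext}/\difffield\langle\bs\rangle$, we have $H\subseteq\group(\field)$ because $A_{\group}(\bs)\in\liealg(\difffield\langle\bs\rangle)$. Finally the tower $\difffield\langle\bs\rangle\subseteq\difffield\langle\bv\rangle\subseteq\widehat{\generalext}$ gives $\trdeg_{\difffield\langle\bv\rangle}\widehat{\generalext}=\dim\borel^-=l+m$ by Proposition~\ref{prop:Liouville_part} and $\trdeg_{\difffield\langle\bs\rangle}\difffield\langle\bv\rangle=m$ by Corollary~\ref{cor:properties_of_s}\ref{corprop:a}, so $\dim H=\trdeg_{\difffield\langle\bs\rangle}\widehat{\generalext}=l+2m=\dim\group$; as $\group$ is connected and $H$ is a closed subgroup of full dimension, $H=\group(\field)$.

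I expect the main obstacle to be the generation step $\widehat{\generalext}=\difffield\langle\bs\rangle(Y)$, that is, recovering $\bv$ and $\calYLiou$ from $Y$ over the smaller base $\difffield\langle\bs\rangle$; everything else is either transport of structure along the isomorphism or transcendence-degree bookkeeping already packaged in Corollary~\ref{cor:properties_of_s} and Proposition~\ref{prop:Liouville_part}. The subtlety to watch is that the rational formulas extracting the Bruhat factors are defined over the prime field, hence behave identically over $\difffield$ as over $\field$, so that enlarging $\field$ to $\difffield$ introduces neither new constants nor new algebraic relations among $\bv$, $\bs$ and the entries of $\calYLiou$.
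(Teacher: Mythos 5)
Your proposal is correct and follows essentially the same route as the paper: Proposition~\ref{prop:Liouville_part} for the Liouvillian part, $\dlog(\mathcal{Y})=A_{\group}(\bs)$ via Proposition~\ref{prop:generic_extension}, generation of $\widehat{\generalext}$ over $\difffield\langle\bs\rangle$ through uniqueness of the Bruhat decomposition, transcendence-degree counting $l+m$ plus $m$ (Corollary~\ref{cor:properties_of_s}) to get $\dim\group$, and finally the transfer to the indeterminates $\bt$ via differential algebraic independence of $\bs$ over $\difffield$. The only difference is cosmetic: you perform the reduction $\bt\mapsto\bs$ at the start, while the paper does it in the closing sentence.
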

\begin{proof}
 By Proposition~\ref{prop:Liouville_part} the matrix $\ALiou(\bv)$ defines
 a Picard-Vessiot extension $\widehat{\generalext}$ of $\difffield\langle \bv \rangle$ with
 differential Galois group $\borel^-(\field)$ and has a fundamental matrix $\calYLiou \in \borel^-(\widehat{\generalext})$.
As a first consequence the transcendence degree of $\widehat{\generalext}$ over $\difffield \langle \bv \rangle$ is
$$\dim(\borel^-) \, = \, m+l.$$
Further, since the logarithmic derivatives of $\calYLiou$ and $\YLiou$ are equal, Proposition~\ref{prop:generic_extension} \ref{propext:d} implies that the 
logarithmic derivative of
$$\mathcal{Y} \, = \, \boldsymbol{u}\big( \bv,f_{l+1}(\bv),\dots, f_m(\bv)\big) \,  n(\lweyl) \, \calYLiou $$
is $A_{\group}(\bs)$. By Corollary~\ref{cor:properties_of_s} \ref{corprop:b} the differential polynomials $\bs$ are differentially algebraically independent over $\difffield$.
Moreover, $\widehat{\generalext}$ is a Picard-Vessiot extension of $F\langle \bs \rangle$ for $A_{\group}(\bs)$. Indeed, the fields of constants of $\widehat{\generalext}$ and $\difffield\langle \bs \rangle$ are $\field$, since $\widehat{\generalext}$ is a Picard-Vessiot extension and $\difffield\langle \bs \rangle$ is contained in $\difffield\langle \bv \rangle$. 
The matrix $\mathcal{Y}$ is a fundamental matrix for $A_{\group}(\bs)$ and its entries clearly lie in $\widehat{\generalext}$. In order to prove that $\mathcal{Y}_{ij}$ generate $\widehat{\generalext} = \difffield\langle \bv \rangle(\calYLiou)$ as a field over $F\langle \bs \rangle$, we show the non-trivial inclusion $\widehat{\generalext} \subseteq F \langle \bs \rangle(\mathcal{Y}_{ij})$.
Since $\mathcal{Y}\in \group(\widehat{\generalext})$, the Bruhat decomposition and its uniqueness imply that we can express $\bv$, 
$f_{l+1}(\bv),\dots, f_m(\bv) $ and the entries of $\calYLiou$ as rational functions in $\mathcal{Y}_{ij}$. Since  $F \langle \bs \rangle(\mathcal{Y}_{ij})$ is a differential field, the derivatives of $\mathcal{Y}_{ij}$ being defined by $A_{\group}(\bs)$, it contains besides the entries of $\calYLiou$ all derivatives of $\bv$ and so  $\widehat{\generalext} \subseteq F \langle \bs \rangle(\mathcal{Y}_{ij})$. Hence, 
 $\widehat{\generalext} = \difffield\langle \bs \rangle(\mathcal{Y}_{ij})$ and $\widehat{\generalext}$ is a Picard-Vessiot extension of $\difffield\langle \bs \rangle$ for $A_{\group}(\bs)$.
\begin{center}
\begin{tikzpicture}[thick,scale=0.75]
\node[circle, draw, fill=black, inner sep=0pt, minimum width=4pt] (A) at (0,2) {};
\node[circle, draw, fill=black, inner sep=0pt, minimum width=4pt] (B) at (0,1) {};
\node[circle, draw, fill=black, inner sep=0pt, minimum width=4pt] (C) at (0,0) {};
\node (D) at (0,1.5) {};
\node (E) at (0,0.5) {};
\draw (A) -- (B);
\draw (B) -- (C);
\node (LA) [left=1em of A] {$\widehat{\generalext}$};
\node (LB) [left=1em of B] {$\difffield\langle \bv \rangle$};
\node (LC) [left=1em of C] {$\difffield\langle \bs \rangle$};
\node (LD) [right=1em of D] {${\rm trdeg} = m+l$};
\node (LE) [right=1em of E] {${\rm trdeg} = m$};
\end{tikzpicture}
\end{center}
By Corollary~\ref{cor:properties_of_s} \ref{corprop:a} the transcendence degree of $\difffield\langle \bv \rangle$ over $\difffield\langle \bs \rangle$ is $m$ and so the transcendence degree of $\widehat{\generalext}$ over $\difffield\langle \bs \rangle$ is 
$$2m+l \, = \, \mathrm{dim}(\group). $$
We conclude that the differential Galois group of $\widehat{\generalext}$ over $\difffield\langle \bs \rangle$ for $A_{\group}(\bs)$ is $\group(\field)$. 
Since $\bs$ are 
differentially algebraically independent over $\difffield$, the
Galois group of $A_{\group}(\bt)$ over $\difffield\langle \bt \rangle$ is $\group(\field)$, where $\bt$ are differential indeterminates over $\difffield$.
\end{proof}

\section{Specializing the Coefficients of the Generic Equation}\label{sec:specializing}
Let $\algext$ be the extension field of $\field\langle \bap,\bam, \baz \rangle$ constructed in the previous sections with respect to the resolving Weyl group element $\lweyl$, i.e.~the longest Weyl group element. Note that in this case we have  
$$\Psi \, = \, \{ -\alpha_{l+1},\dots,-\alpha_m \}\,, \quad  \bb_{\lweyl} \, = \, (b_{l+1},\dots,b_m)\,, $$
and that $\algext$ is an algebraic extension of the differential field  
$$ \resfield \, = \, \Frac(\diffring) \, = \, \field\langle \bbi_{\lweyl}, \bapi, \bami, \bazi \rangle, $$
where we denote by $\diffring$ the differential ring
\begin{equation}\label{defR}
\diffring \, = \, \field\{ \bb_{\lweyl}, \bap, \bam, \baz \} / \Imw \, = \, \field\{ \bbi_{\lweyl}, \bapi, \bami, \bazi \} .
\end{equation}

Let $\multset$ be the multiplicatively closed subset generated by $\overline{h}^+_1,\dots,\overline{h}^+_l$ and let $\boldsymbol{\overline{x}}=(\overline{x}_1,\dots,\overline{x}_l)$ be 
the solutions of \eqref{eq:normalization} introduced in the proof of Lemma~\ref{TransStep2}, i.e., each $\overline{x}_i$ is a certain product of powers of $\overline{h}^+_1,\dots,\overline{h}^+_l$ with rational exponents. Then 
$$ \locdiffring \, := \, (\multset^{-1}\diffring)[\boldsymbol{\overline{x}}]  $$
defines an integral ring extension $\locdiffring$ of $\multset^{-1}\diffring$ and the gauge transformation of $A(\bapi, \bami, \bazi )$ to $A_{\group}(\bt)$ as in Theorem~\ref{thm:gauge_eq_&_full_group} is defined over $\locdiffring$. Indeed, following the proof of the theorem, the matrix $A(\bapi, \bami, \bazi )$ is gauge equivalent to 
$A(\bhp,\bhm,\bhz )$ over $\diffring$, where $\bhp=(\overline{h}^+_1,\dots,\overline{h}^+_l,0,\dots,0) \in \diffring^m$ with non-zero $\overline{h}^+_1,\dots,\overline{h}^+_l$. Since $\locdiffring$ contains the roots for the normalization of the coefficients $\overline{h}^+_1,\dots,\overline{h}^+_l$, it follows that $A(\bhp,\bhm,\bhz )$ is gauge equivalent to
$A(\bgp,\bgm,\bgz)$ with 
$\bgp=(1,\dots,1,0,\dots,0)$. The transformation from $A(\bgp,\bgm,\bgz)$ to $A_{\group}(\bt)$ needs no further inverse elements nor algebraic extensions and is therefore also defined over $\locdiffring$.

Let $\difffield$ be an arbitrary differential field with field of constants $\field$ and let   
$$A \, = \, A(\bsap, \bsam, \bsaz) \in \liealg(\difffield)$$
define a Picard-Vessiot extension $\extfield$ with Galois group $\group(\field)$.
For differential indeterminates $\bsb=((b_*)_{l+1},\dots,(b_*)_m)$ over $\extfield$ we denote by
$\sigma$ the homomorphism of differential rings 
$$ \sigma: \field\{\bb_{\lweyl}, \bap, \bam,\baz \} \longrightarrow \extfield\{ \bsb \}, \quad (\bb_{\lweyl},\bap, \bam,\baz)\longmapsto (\bsb,\bsap, \bsam,\bsaz).$$
We fix this notation for the rest of the section.
\begin{theorem}\label{thm:specialization}
Suppose $A=A(\bsap, \bsam, \bsaz)$ defines a Picard-Vessiot extension $\extfield$ of $\difffield$ with differential Galois group $\group(\field)$. 
If $\sigma(\Imw)$ is a maximal differential ideal in $\extfield\{ \bsb \}$, then there exists a 
differential extension field $\nongenext$ of $\difffield$ with the following properties:
\begin{enumerate}
    \item The field of constants of $\nongenext$ is $\field$.
    \item There exists a differential homomorphism $\overline{\sigma}: \locdiffring \rightarrow \nongenext$ such that 
    $$\overline{\sigma}(A(\bapi, \bami, \bazi )) \, = \, A.$$  
    \item The matrix $A$ is gauge equivalent over $\nongenext$ to the specialization $\overline{\sigma}(A_{\group}(\bt))$ of $A_{\group}(\bt)$.
    \item The differential Galois group of $A$ over $\nongenext$ is $\group(\field)$.
\end{enumerate}
\end{theorem}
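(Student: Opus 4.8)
The plan is to run the construction of Sections~\ref{sec:NF_generic}--\ref{sec:NF_Galois} with the generic coefficients already specialized through $\sigma$, and to read off the four properties from the resulting ring. Since $\sigma(\Imw)$ is maximal, the quotient $\widehat{\diffring} := \extfield\{ \bsb \}/\sigma(\Imw)$ is a simple differential ring; set $\widehat{\extfield} := \Frac(\widehat{\diffring})$. Being a simple differential ring that is finitely generated as a differential algebra over the field $\extfield$ whose constants $\field$ are algebraically closed, $\widehat{\diffring}$ has constant ring $\field$ by the standard constants lemma (see, e.g.\ \cite{Magid}), so the field of constants of $\widehat{\extfield}$ is $\field$.

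Because $\bsap, \bsam, \bsaz \in \difffield$, the generators $\sigma(f^+_{\lweyl,l+1}), \dots, \sigma(f^+_{\lweyl,m})$ of $\sigma(\Imw)$ already lie in $\difffield\{ \bsb \}$, so $\sigma(\Imw) = J \cdot \extfield\{ \bsb \}$ where $J$ is the differential ideal they generate in $\difffield\{ \bsb \}$. As $\difffield \hookrightarrow \extfield$ is faithfully flat, $J = \sigma(\Imw) \cap \difffield\{ \bsb \}$ is the contraction of a prime, hence prime, and
$$\widehat{\diffring} \, \cong \, \extfield \otimes_{\difffield} \big( \difffield\{ \bsb \}/J \big).$$
Put $\nongenext_0 := \Frac(\difffield\{ \bsb \}/J)$, a differential field with $\widehat{\extfield} = \extfield \cdot \nongenext_0$. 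Then $\sigma$ descends to a differential homomorphism $\overline{\sigma}\colon \diffring \to \widehat{\diffring}$ carrying the residues of $\bb_{\lweyl}$ to the residues of $\bsb$ and $\bapi, \bami, \bazi$ to $\bsap, \bsam, \bsaz$; in particular $\overline{\sigma}(A(\bapi, \bami, \bazi)) = A$. After checking the non-vanishing $\overline{\sigma}(\overline{h}^+_i) \neq 0$, I would adjoin roots $\overline{\sigma}(\boldsymbol{\overline{x}})$ of \eqref{eq:normalization} in a finite algebraic extension $\widehat{\extfield}'$ of $\widehat{\extfield}$ (the constants staying $\field$ since $\field$ is algebraically closed); this extends $\overline{\sigma}$ to $\locdiffring$. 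Setting $\nongenext := \nongenext_0(\overline{\sigma}(\boldsymbol{\overline{x}}))$ then yields properties (1) and (2).

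For property (3), recall that the whole chain of gauge transformations of Theorem~\ref{thm:gauge_eq_&_full_group} is defined over $\locdiffring$. Applying the differential ring homomorphism $\overline{\sigma}$ to each transforming matrix, whose entries lie in $\locdiffring$, produces a gauge transformation over $\nongenext$ carrying $A = \overline{\sigma}(A(\bapi, \bami, \bazi))$ to the specialization $\overline{\sigma}(A_{\group}(\bt))$.

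The crux is property (4), and this is the step I expect to be the main obstacle. Since $A$ is gauge equivalent over $\nongenext$ to $\overline{\sigma}(A_{\group}(\bt))$, a fundamental matrix of $A$ generates over $\nongenext$ the field $\extfield \cdot \nongenext = \widehat{\extfield}'$, whose constants are $\field$; hence $\widehat{\extfield}'/\nongenext$ is Picard--Vessiot for $A$, with differential Galois group a closed subgroup $H \leq \group(\field)$. To force $H = \group(\field)$ I would exploit primality: because $\widehat{\diffring} = \extfield \otimes_{\difffield}(\difffield\{ \bsb \}/J)$ is a domain, its localization $\extfield \otimes_{\difffield} \nongenext_0$ injects into $\widehat{\extfield}$, so $\extfield$ and $\nongenext_0$ are linearly disjoint over $\difffield$. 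Consequently
$$\dim H \, = \, \trdeg_{\nongenext}(\widehat{\extfield}') \, = \, \trdeg_{\nongenext_0}(\extfield \cdot \nongenext_0) \, = \, \trdeg_{\difffield}(\extfield) \, = \, \dim \group,$$
where the second equality uses that $\overline{\sigma}(\boldsymbol{\overline{x}})$ is algebraic over $\nongenext_0$ and the third is the linear disjointness. As $\group$ is connected, a closed subgroup of full dimension equals $\group(\field)$, giving $H = \group(\field)$. The heart of the matter is thus turning the hypothesis ``$\sigma(\Imw)$ maximal'' into primality of $\sigma(\Imw)$, then into the tensor decomposition and the linear disjointness that pins the transcendence degree (equivalently, forces $\extfield \cap \nongenext = \difffield$); maximality, as opposed to mere consistency of the associated system, is also exactly what keeps the constants equal to $\field$. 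The only routine loose end is verifying $\overline{\sigma}(\overline{h}^+_i) \neq 0$, needed to transport the normalization of Lemma~\ref{TransStep2}.
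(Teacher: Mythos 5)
Your overall route coincides with the paper's: both use maximality of $\sigma(\Imw)$ to control the constants, descend $\sigma$ to a differential homomorphism on $\diffring$, extend it to $\locdiffring$ by adjoining solutions of \eqref{eq:normalization} in an algebraic extension, transport the gauge equivalence to normal form through the specialization, and conclude the Galois group by a dimension count plus connectedness of $\group$. The one genuinely different ingredient is how you pin down $\dim H$: the paper does explicit transcendence-degree bookkeeping, using the structure of the generators from Theorem~\ref{thm:Sw_simple} (leaders $b_i'$, linear in the leader, constant initials) together with maximality to get $\trdeg_{\extfield}(\extfield\langle \bsbi \rangle) = \trdeg_{\difffield}(\difffield\langle \bsbi \rangle) = m-l$ and hence $\trdeg_{\difffield\langle \bsbi \rangle}(\extfield\langle \bsbi \rangle) = 2m+l$; you instead obtain the same number from primality of $\sigma(\Imw)$ (a maximal differential ideal in a Ritt algebra is prime), the decomposition $\widehat{\diffring} \cong \extfield \otimes_{\difffield} (\difffield\{\bsb\}/J)$, and linear disjointness. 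That variant is sound and arguably more structural; note only that your appeal to the constants lemma requires $\widehat{\diffring}$ to be finitely generated as an $\extfield$-algebra, which does hold here because the generators of $\sigma(\Imw)$ express each derivative $b_i'$ polynomially in the $\bsb$.

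The genuine gap is the step you dismiss as a ``routine loose end'': $\overline{\sigma}(\overline{h}^+_i) \neq 0$. There is no direct verification of this; $\widetilde{\sigma}$ is a specialization and can perfectly well annihilate nonzero elements of $\diffring$. In the paper this non-vanishing is itself a consequence of the Galois group: one first proves that the group of $\extfield\langle \bsbi \rangle / \difffield\langle \bsbi \rangle$ is all of $\group(\field)$, and then observes that if some $\widetilde{\sigma}(\overline{h}^+_i)$ vanished, $A$ would be gauge equivalent over $\difffield\langle \bsbi \rangle$ to a matrix lying in a proper Lie subalgebra of $\liealg$ (the coefficient of $X_{\alpha_i}$ in $\widetilde{\sigma}(A(\bhp,\bhm,\bhz))$ being gone), forcing the Galois group into a proper closed subgroup --- a contradiction. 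Your write-up inverts this order: properties (2) and (3), i.e.\ the construction of $\nongenext$ and the extension of $\overline{\sigma}$ to $\locdiffring$, presuppose the non-vanishing, while your argument for (4) is phrased as resting on the gauge equivalence from (3), which creates an apparent circularity. The repair is available inside your own framework: the generation claim in (4) never needed the normal form, since $\extfield = \difffield(Y)$ for a fundamental matrix $Y$ gives $\extfield \cdot \nongenext_0 = \nongenext_0(Y)$ directly. So run the linear-disjointness dimension count over $\nongenext_0$ first, conclude the full group $\group(\field)$ there, then derive $\widetilde{\sigma}(\overline{h}^+_i) \neq 0$ by the proper-subalgebra argument, and only afterwards build $\nongenext$, transport the gauge equivalence, and invoke connectedness to keep the group unchanged under the algebraic extension.
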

\begin{proof}
First we construct a purely transcendental differential field extension $\difffield\langle \bsbi \rangle$ of $\difffield$ with constants $\field$
and show that the Galois group over $\difffield\langle \bsbi \rangle$ is still the full group $\group$.
Since $\sigma(\Imw)$ is a maximal differential ideal in $\extfield\{ \bsb \}$, the field of constants of 
$$\extfield\langle \bsbi \rangle \, = \, \Frac(\extfield\{ \bsb \}/\sigma(\Imw)) $$
is $\field$. Thus the fields of constants of the subring 
$$\extfield\{ \bsbi \} \, = \, \extfield\{ \bsb \}/\sigma(\Imw) $$
and of the subfield $\difffield\langle \bsbi \rangle$ are also $\field$.
Hence, the differential field extension $\extfield\langle \bsbi \rangle$ of $\difffield\langle \bsbi \rangle$ is a Picard-Vessiot extension for $A$. Since $A\in \liealg(\difffield)$, its differential Galois group $H(\field)$ is a subgroup of $\group(\field)$. We prove that $H(\field)=\group(\field)$. 
Recall from Section~\ref{sec:res_weyl_diff_systems} that $\Imw$ is generated by $f^+_{\lweyl,l+1},\dots, f^+_{\lweyl,m}$
and by Theorem~\ref{thm:Sw_simple} the leader of $f^+_{\lweyl,i}$
is $b_i'$ and $f^+_{\lweyl,i}$ is linear in its leader and has constant initial. 
Then the same is true for the generators $\sigma(f^+_{\lweyl,l+1}),\dots, \sigma(f^+_{\lweyl,m})$ of $\sigma(\Imw)$ and, since $\sigma(\Imw)$ is a maximal differential ideal, the transcendence degree of $E\langle \bsbi \rangle$ over $\extfield$ is 
$m-l$. Moreover, the assumption on the Galois group implies
$$\mathrm{trdeg}_{\difffield}(\extfield) \, = \, \dim(\group) \, = \, 2m+l$$
and so the transcendence degree of $E\langle \bsbi \rangle$ over $\difffield$ is $3m$. Again by the properties of $\sigma(f^+_{\lweyl,l+1}),\dots, \sigma(f^+_{\lweyl,m})$ and the maximality of $\sigma(\Imw)$ the transcendence degree of $\difffield\langle  \bsbi \rangle$ over $\difffield$ is also $m-l$. It follows that
\[
\dim (H(\field)) \, = \, \mathrm{trdeg}_{\difffield\langle \bsbi \rangle}(\extfield\langle \bsbi \rangle) \, = \, 2m+l \, = \, \dim (\group(\field))
\]
and thus, by the main theorem of Galois theory, we conclude that $H(\field)=\group(\field)$.
\begin{center}
\begin{tikzpicture}[thick,scale=0.75]
\node[circle, draw, fill=black, inner sep=0pt, minimum width=4pt] (A) at (0,2) {};
\node[circle, draw, fill=black, inner sep=0pt, minimum width=4pt] (B) at (0,1) {};
\node[circle, draw, fill=black, inner sep=0pt, minimum width=4pt] (C) at (0,0) {};
\node (D) at (0,1.5) {};
\node (E) at (0,0.5) {};
\draw (A) -- (B);
\draw (B) -- (C);
\node (LA) [left=1em of A] {$\extfield\langle \bsbi \rangle$};
\node (LB) [left=1em of B] {$\extfield$};
\node (LC) [left=1em of C] {$\difffield$};
\node (LD) [right=1em of D] {${\rm trdeg} = m-l$};
\node (LE) [right=1em of E] {${\rm trdeg} = 2m+l$};
\end{tikzpicture}
\qquad
\begin{tikzpicture}[thick,scale=0.75]
\node[circle, draw, fill=black, inner sep=0pt, minimum width=4pt] (A) at (0,2) {};
\node[circle, draw, fill=black, inner sep=0pt, minimum width=4pt] (B) at (0,1) {};
\node[circle, draw, fill=black, inner sep=0pt, minimum width=4pt] (C) at (0,0) {};
\node (D) at (0,1.5) {};
\node (E) at (0,0.5) {};
\draw (A) -- (B);
\draw (B) -- (C);
\node (LA) [left=1em of A] {$E\langle \bsbi \rangle$};
\node (LB) [left=1em of B] {$\difffield\langle  \bsbi \rangle$};
\node (LC) [left=1em of C] {$\difffield$};
\node (LD) [right=1em of D] {${\rm trdeg} = 2m+l$};
\node (LE) [right=1em of E] {${\rm trdeg} = m-l$};
\end{tikzpicture}
\end{center}

Clearly the differential homomorphism
$$\widetilde{\sigma}: \diffring \longrightarrow \difffield\langle \bsbi \rangle, \quad (\bapi, \bami, \bazi,\bbi_{\lweyl}) \longmapsto (\bsap, \bsam, \bsaz,\bsbi),$$
where $\diffring$ is defined in \eqref{defR},
is well-defined and satisfies $\widetilde{\sigma}(A(\bami, \bazi, \bapi ))=A$. Because
$A(\bami, \bazi, \bapi )$ can be gauge transformed to $A(\bhp,\bhm,\bhz )$ over $\diffring$ and $\widetilde{\sigma}$ is a differential homomorphism, $A$ is gauge 
equivalent to $\widetilde{\sigma}(A(\bhp,\bhm,\bhz ))$ over $\difffield\langle \bsbi \rangle$. The fact that the differential Galois group of 
$\extfield\langle \bsbi \rangle$ over $\difffield\langle \bsbi \rangle$ is $\group(\field)$ implies that $\widetilde{\sigma}(\overline{h}^+_1),\dots, \widetilde{\sigma}(\overline{h}^+_l)$ are all non-zero, since otherwise we would have obtained a gauge equivalent matrix in a proper Lie subalgebra of $\liealg$.
The extension of $\widetilde{\sigma}$ to  $\locdiffring = (\multset^{-1}\diffring)[\boldsymbol{\overline{x}}]$
depends on the existence of appropriate images of $\boldsymbol{\overline{x}}$.
Since $\widetilde{\sigma}(\overline{h}^+_1),\dots, \widetilde{\sigma}(\overline{h}^+_l)$ are non-zero, the specialization of the equations in \eqref{eq:normalization} by $\widetilde{\sigma}$ has a solution in an algebraic extension field.
Hence, there exists an algebraic extension $\nongenext$ of 
$\difffield\langle \bsbi \rangle$
and a differential homomorphism
$$\overline{\sigma}: \locdiffring \longrightarrow \nongenext$$ 
extending $\widetilde{\sigma}$.
The gauge equivalence of $A(\bhp,\bhm,\bhz )$ to $A_{\group}(\bt)$ over $\locdiffring$
implies the gauge equivalence of $A$ to $\overline{\sigma}(A_{\group}(\bt))$ over $\nongenext$. Finally, since $\group(\field)$ is 
a connected group and $\nongenext$ is an algebraic extension of $\difffield\langle  \bsbi \rangle$, the differential 
Galois group for $A$ remains $\group(\field)$ over $\nongenext$.
\end{proof}

Finally, for a Picard-Vessiot extension $\extfield$ of $\difffield$ with differential Galois group $\SL_{3}(\field)$ and defining matrix 
$$A \, = \, \sum_{i=1}^3 (a_*)^-_i X_{-\alpha_i} + \sum_{i=1}^2 (a_*)^0_i H_i + \sum_{i=1}^3 (a_*)^+_i X_{\alpha_i}  $$
we show how Theorem~\ref{thm:specialization} allows to construct a differential extension field $\nongenext$ of $\difffield$
such that  the field of constants of $\nongenext$ is $\field$, $A$ is gauge equivalent to a matrix in normal form over $\nongenext$ and the differential Galois group of $A$ over $\nongenext$ is the full group $\SL_{3}(\field)$.
To this end let $\boldsymbol{r}=(r_1, r_2)$ be two differential indeterminates over $\extfield$. Then clearly $\extfield\langle \boldsymbol{r} \rangle$ is a Picard-Vessiot extension of 
$\difffield\langle \boldsymbol{r} \rangle$ with differential Galois group $\SL_3(\field)$.
Gauge transformation of $A$ by 
$u_{-\alpha_1}(r_1) u_{-\alpha_2}(r_2)$ yields a new defining matrix 
$$A_{\boldsymbol{r}}=A(\boldsymbol{a}^+_{\boldsymbol{r}},\boldsymbol{a}^-_{\boldsymbol{r}},\boldsymbol{a}^0_{\boldsymbol{r}}) \in \liesl_3(\difffield\langle \boldsymbol{r} \rangle)$$ for the extension $\extfield\langle \boldsymbol{r} \rangle$ of $\difffield\langle \boldsymbol{r} \rangle$.
As in the paragraph before Theorem~\ref{thm:specialization} with $A$, $F$ and $E$ replaced by  $A_{\boldsymbol{r}}$, $\difffield\langle \boldsymbol{r} \rangle$ and $\extfield\langle \boldsymbol{r} \rangle$, respectively, we consider the differential ring homomorphism 
$$ \sigma: \field\{\bb_{\lweyl}, \bap, \bam,\baz \} \longrightarrow \extfield\langle \boldsymbol{r} \rangle  \{ \bsb \}, \quad (\bb_{\lweyl},\bap, \bam,\baz)\longmapsto (\bsb,\boldsymbol{a}^+_{\boldsymbol{r}},\boldsymbol{a}^-_{\boldsymbol{r}},\boldsymbol{a}^0_{\boldsymbol{r}}).$$
Once we have verified for $A_{\boldsymbol{r}}$ and $\extfield\langle \boldsymbol{r} \rangle/\difffield\langle \boldsymbol{r} \rangle$ the condition on the differential ideal $\sigma(\Imw)$, Theorem~\ref{thm:specialization} yields the desired extension $L$ of $\difffield\langle \boldsymbol{r} \rangle$ and hence of $F$. 

In order to identify the ideal $\sigma(\Imw)$, we
note that for $\SL_3$ and the longest Weyl group element $\lweyl$ we have  $\Psi=\{-\alpha_3 \}$ and $\bb_{\lweyl}=(b_3)$. Gauge transformation with $u_{-\alpha_3}((b_*)_3)$ yields then the system
$$\sigma(\system_{\lweyl}): \quad \sigma(f^+_{\lweyl,3} )=0 $$
consisting of a single equation, where $\sigma(f^+_{\lweyl,3})$ is 
\begin{gather*}
  (b_*)_3' -(a_*)^+_3  (b_*)_3^2 + 
 ((a_*)^+_3  r_1 r_2  - r_2 (a_*)^+_2  - r_1 (a_*)^+_1  + (a_*)^0_1 + (a_*)^0_2 ) (b_*)_3 \,
 +  \\ r_2 (a_*)^-_1  + (a_*)^-_3  
  - (r_2 ( (a_*)^0_2-(a_*)^0_1 ) + (a_*)^-_2 ) r_1  + (r_2 (a_*)^+_2 - (a_*)^0_2 ) r_1 r_2 - r_1  r_2' .
\end{gather*}
The next lemma gives a criterion for the ideal $\sigma(\Imw) = \langle \sigma(f^+_{\lweyl,3}) \rangle$ to be a maximal differential ideal in 
$\extfield\langle \boldsymbol{r} \rangle\{ (b_*)_3 \}$.

\begin{lemma}\label{lem:maxdiffideal}
Let $\otherfield$ be a differential field of characteristic zero.
Let 
$$f(y)= y'+ c_3 y^3 + c_2 y^2 + c_1 y + c_0   \in \otherfield\{ y \}, \quad c_0 \neq 0,$$
be a differential polynomial. 
For every positive integer $m$ we define the system $\Sigma_m$ of $m$ differential equations in $m$
differential indeterminates $y_0$, \ldots, $y_{m-1}$
\[
\left\{
\begin{array}{rcl}
y_0' \!\! & \!\! = \!\! & \!\!
c_3 ((2 y_{m-2}-y_{m-1}^2)y_0-m y_{m-3}) + (c_2 y_{m-1} - m c_1) y_0 + c_0 y_1,\\
\!\! & \!\! \vdots \!\! & \!\!\\
y_k' \! & \! = \! & \! c_3 ((2 y_{m - 2}-y_{m-1}^2) y_{k} - m y_{m - 3} + k y_{k - 2} + y_{m - 1} y_{k - 1} - 2 y_{k - 2})\\[0.2em]
\!\! & \!\! \!\! & \!\! + \, ( c_2 y_{m - 1} - m c_1 + k c_1) y_{k} + (k - m - 1) c_2 y_{k - 1} + (k+1) c_0 y_{k + 1},\\
\!\! & \!\! \vdots \!\! & \!\!\\
y_{m-1}' \!\! & \!\! = \!\! & \!\!
c_3 ((3 y_{m - 2} - y_{m - 1}^2) y_{m-1} - 3 y_{m - 3})\\[0.2em]
\!\! & \!\! \!\! & \!\! + \, (c_2 y_{m - 1} - c_1) y_{m - 1} - 2 c_2 y_{m - 2} + m  c_0,
\end{array}\right.
\]
where the convention $y_m=1$, $y_{-2}=0$, $y_{-1}=0$ is used.
If for all positive integers $m$ the system has no solution in $\otherfield^m$, then the differential 
ideal $\langle f(y) \rangle$ is a maximal differential ideal in $\otherfield\{ y \}$.
\end{lemma}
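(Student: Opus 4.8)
The plan is to reduce the maximality of $\langle f(y)\rangle$ to the \emph{differential simplicity} of the quotient ring, and then to translate the latter into the non-solvability of the systems $\Sigma_m$. Throughout write $P(y)=c_3y^3+c_2y^2+c_1y+c_0$, so that $f(y)=y'+P(y)$.

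First I would identify the quotient $\otherfield\{y\}/\langle f(y)\rangle$. Since the leader of $f$ is $y'$ and both its initial and its separant equal $1$, Proposition~\ref{prop:differentialmembership} (applied with $q=1$) shows that $\langle f(y)\rangle$ is prime. Reducing $y^{(k)}$ for $k\ge 1$ modulo $f$ and its derivatives shows that $\otherfield\{y\}/\langle f(y)\rangle\cong\otherfield[y]$ as differential rings, where the induced derivation $D$ restricts to the given derivation on $\otherfield$ and satisfies $D(y)=-P(y)$. Differential ideals of $\otherfield\{y\}$ properly containing $\langle f(y)\rangle$ correspond bijectively to nonzero proper differential ideals of $(\otherfield[y],D)$; hence $\langle f(y)\rangle$ is a maximal differential ideal if and only if $(\otherfield[y],D)$ is differentially simple.

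Next I would describe the differential ideals of $\otherfield[y]$. As $\otherfield[y]$ is a principal ideal domain, each nonzero proper ideal is $(g)$ for a monic $g$ of degree $m\ge 1$, and it is a differential ideal precisely when $g\mid D(g)$. Writing $g=y^m+g_{m-1}y^{m-1}+\dots+g_0$, one computes $D(g)=-P(y)\,\partial g/\partial y+\sum_{j=0}^{m-1}g_j'\,y^j$, a polynomial of degree $m+2$ with leading coefficient $-mc_3$; thus $g\mid D(g)$ forces $D(g)=g\,h$ for a unique $h=h_2y^2+h_1y+h_0$. Comparing the coefficients of $y^{m+2}$, $y^{m+1}$, $y^m$ fixes $h$ in terms of $c_1,c_2,c_3$ and the two top coefficients $g_{m-1},g_{m-2}$, and comparing the coefficients of $y^0,\dots,y^{m-1}$ and solving each for $g_k'$ yields a system of $m$ first-order differential equations for the coefficients $(g_0,\dots,g_{m-1})$. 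The claim is that, under the identification $y_k:=g_k$ together with the conventions $g_m=1$ and $g_{-1}=g_{-2}=0$, this system is the system $\Sigma_m$. Hence a tuple in $\otherfield^m$ solves $\Sigma_m$ if and only if the corresponding monic polynomial generates a nonzero proper differential ideal of $\otherfield[y]$; note that $c_0\neq 0$ guarantees in particular that $y\nmid D(y)$, so that no such ideal is generated by $y$ itself.

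Combining these reductions completes the proof: if $\Sigma_m$ has no solution in $\otherfield^m$ for every $m\ge 1$, then $\otherfield[y]$ has no nonzero proper differential ideal, it is differentially simple, and therefore $\langle f(y)\rangle$ is a maximal differential ideal. The one step that requires care is checking that the coefficient comparison reproduces the equations of $\Sigma_m$, since the index shifts arising from multiplication by the cubic $P$ and by the quadratic $h$ must recombine into the stated right-hand sides, including the special equations for $y_0'$ and $y_{m-1}'$. I expect this to be the main obstacle, and the most transparent way to carry it out is to factor $g=\prod_i(y-\rho_i)$ over an algebraic closure (extending $D$ uniquely): a residue computation of $D(g)/g=\sum_i(-P(y)-\rho_i')/(y-\rho_i)$ shows that $g\mid D(g)$ is equivalent to each root obeying the Riccati relation $\rho_i'=-P(\rho_i)$, and differentiating the elementary symmetric functions of the $\rho_i$ subject to these relations then produces the equations for the $g_k'$ that constitute $\Sigma_m$.
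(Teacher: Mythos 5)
Your proposal is correct in substance but takes a genuinely different route from the paper. The paper works directly in $\otherfield\{y\}$: given $g$ with $\langle f(y)\rangle\subsetneq\langle f(y),g(y)\rangle$, it reduces $g(y)'$ modulo $f$ and $g$ through four explicit elimination steps to a polynomial $\tilde{g}_4$ of degree at most $m-1$, observes that $\tilde{g}_4=0$ would yield a solution of $\Sigma_m$, and then inducts on the degree $m$. Your reduction to differential simplicity of $(\otherfield[y],D)$ with $D(y)=-P(y)$, together with the PID observation that every nonzero proper differential ideal is $(g)$ with $g\mid D(g)$, i.e.\ $D(g)=gh$ with $\deg h\le 2$, eliminates the induction entirely: a single coefficient comparison suffices, and the paper's $\tilde{g}_4$ is recognized as nothing but the remainder of $D(g)$ upon division by $g$. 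Your framing is cleaner and explains \emph{why} the systems $\Sigma_m$ arise; the paper's version is more self-contained pseudo-reduction. Two minor points: Proposition~\ref{prop:differentialmembership} gives radicality, not primality, of $\langle f(y)\rangle$ --- but primality is immediate from your isomorphism with the domain $\otherfield[y]$, and neither is actually needed; and your closing residue argument is valid even for multiple roots, since the simple pole of $D(g)/g=\sum_i e_i\bigl(-P(y)-\rho_i'\bigr)/(y-\rho_i)$ at a root $\rho_i$ can only vanish if $\rho_i'=-P(\rho_i)$.

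The one step you defer --- that comparing the coefficients of $y^0,\dots,y^{m-1}$ in $D(g)=gh$ reproduces $\Sigma_m$ --- is indeed routine, but carrying it out does \emph{not} literally give the printed system. From $h_2=-mc_3$, $h_1=c_3y_{m-1}-mc_2$ and $h_0=c_3(2y_{m-2}-y_{m-1}^2)+c_2y_{m-1}-mc_1$ one obtains, for $0\le k\le m-1$, the equation $y_k'=c_3\bigl((2y_{m-2}-y_{m-1}^2)y_k+y_{m-1}y_{k-1}+(k-2-m)y_{k-2}\bigr)+(c_2y_{m-1}-mc_1+kc_1)y_k+(k-m-1)c_2y_{k-1}+(k+1)c_0y_{k+1}$: the relevant term is $-mc_3\,y_{k-2}$, not $-mc_3\,y_{m-3}$. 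The two coincide exactly when $k=m-1$ (and trivially for $m\le 2$), which is why the printed last equation agrees with your derivation while the generic and the $k=0$ equations do not; for instance, for $m=3$, $k=0$, a direct check with $g=\prod_i(y-\rho_i)$ and $\rho_i'=-P(\rho_i)$ gives $y_0'=c_3(2y_1-y_2^2)y_0+(c_2y_2-3c_1)y_0+c_0y_1$, without the printed $-3c_3y_0$. The discrepancy is an index slip in the paper itself: in its computation of $\tilde{g}_2$ the added term $mc_3a_{k-2}$ (used correctly at $y^{m+2},y^{m+1},y^m$, where $a_m,a_{m-1},a_{m-2}$ appear) is written as $mc_3a_{m-3}$ uniformly in $k$, and this propagates through $\tilde{g}_4$ into the statement of $\Sigma_m$. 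So your expectation that the comparison ``recombines into the stated right-hand sides'' fails literally --- strictly, the lemma holds with the corrected system, and non-solvability of the printed one is not what either proof uses --- but your method is precisely the one that detects and repairs the typo; with the corrected $\Sigma_m$ your argument is complete, and the subsequent application to $\SL_3$ is unaffected, since it only exploits features common to both systems (the constant term $mc_0$ and the orders of the coefficients in $r_1$, $r_2$).
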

\begin{proof}
Let $g(y) \in \otherfield\{ y \}$ be a differential polynomial such that 
$$\langle f(y) \rangle \subsetneq  \langle f(y),g(y) \rangle \subseteq \otherfield\{y \}.$$
We can assume without loss of generality that $g(y)$ is a polynomial in $\otherfield[y]$.
Indeed, from $f(y)$ and all its derivatives one can express all $y^{(n)}$ in terms of polynomials in $\otherfield[y]$ modulo $\langle f(y) \rangle$. Hence, we can eliminate all derivatives in $g(y)$ and obtain a 
non-zero polynomial in $\otherfield[y]$, since otherwise we would have 
$\langle f(y) \rangle = \langle f(y),g(y) \rangle$. 
So let 
$$g(y) \, = \, \sum_{k=0}^m a_k y^k \quad \mathrm{with} \quad a_m=1 .$$
We prove by induction on the degree $m \geq 1$ that  $\langle f(y),g(y) \rangle = \otherfield\{y \}$.

Let $m=1$, i.e.\ $g(y)=y+a_0$. Differentiating $g(y)$ and subtracting $f(y)$ yields
$$\tilde{g}(y) \, = \, y' + a_0' -f(y) \, = \, a_0' -(  c_3 y^3 + c_2 y^2 + c_1 y +c_0 ).$$
In the next step we add $c_{3}y^{2}g(y)$ to $\tilde{g}(y)$ and then subtract $(c_3 a_0 - c_{2}) y g(y)$ and $(-c_3 a_0^{2} + c_{2} a_0  -  c_{1}) g(y)$ from the result. 
This leads to 
$$g_0 \, := \, a_0' +  c_3 a_0^{3} - c_{2} a_0^2 + c_{1} a_0 - c_0 \in  \langle f(y),g(y) \rangle.$$
Since by assumption the equation 
$$y_0' \, = \, -c_3 y_0^3 + c_2 y_0^2 - c_1 y_0 + c_0$$
has no solution in $\otherfield$, we obtain $g_0 \in \otherfield \setminus \{ 0\}$ and so 
$\langle f(y),g(y) \rangle = \otherfield\{y \}$.

Let $m\geq 2$. Differentiating $g(y)$ yields
\begin{gather*}
    g(y)' \, = \, m y^{m-1} y' + \big(\sum_{k=1}^{m-1} k a_k y^{k-1} y' + a_k' y^k \big) + a_0'.
\end{gather*}
We are going to reduce $g(y)'$ with differential polynomials in $\langle f(y),g(y) \rangle$ until
we obtain a polynomial of degree $m-1$ which we prove to be non-zero.
In order to eliminate the derivative $y'$ in $g(y)'$ we subtract $m y^{m-1} f(y)$ and 
$$\sum_{k=1}^{m-1} k y^{k-1} a_k f$$
from $g(y)'$ and obtain a polynomial in $\langle f(y),g(y) \rangle$ of degree $m+2$. 
More precisely, %we have  
\begin{eqnarray*}
\tilde{g}_1(y)
\! & \! = \! & \!
-m c_3 y^{m+2} - (mc_2 + (m-1)a_{m-1} c_3) y^{m+1} -  \\
\! & \! \! & \!
(m c_1 + (m-1)a_{m-1} c_2 + (m-2) a_{m-2} c_3 ) y^m  + \\
\! & \! \! & \! \sum_{k=0}^{m-1} \big(  a_k' - (k-2) a_{k-2} c_3 - (k-1) a_{k-1} c_2 - k a_k c_1 - (k+1) a_{k+1} c_0  \big) y^k
\end{eqnarray*}
with the convention $a_{-2}= a_{-1}=0$.
In the next step we eliminate the term $-m c_3 y^{m+2}$ in $\tilde{g}_{1}(y)$. 
To this end we subtract $-m c_3 y^{2} g(y) $ from $\tilde{g}_{1}(y)$. This yields  
\begin{eqnarray*}
\tilde{g}_2(y) \! & \! = \! & \! (-m c_2 +a_{m-1} c_3  ) y^{m+1} + (-m c_1 - (m-1) a_{m-1} c_2 +2a_{m-2} c_3 ) y^m + \\
\! & \! \! & \! \sum_{k=0}^{m-1} [ a_k' + c_3 a_{m-3} m + c_0 (-k-1) a_{k+1}-c_1 k a_{k}-c_2 a_{k-1} (k-1) 
\\
\! & \! \! & \! \qquad
+ \, c_3 a_{k-2} (2-k) ] y^k .
\end{eqnarray*}
In a further elimination step we get rid of the term of degree $m+1$. We subtract from $\tilde{g}_2(y)$ 
the expression 
$(-m c_2 -(m-1)a_{m-1} c_3 + m c_3 a_{m-1} )y g(y)$. This leads to  
\begin{eqnarray*}
\tilde{g}_3(y) \! & \! = \! & \! (-c_1 m-c_3 a_{m-1}^2+c_2 a_{m-1}+2 c_3 a_{m-2} ) y^m + \\
\! & \! \! & \!
\sum_{k=0}^{m-1} \big[ a_k' + (c_3 a_{m-3}+c_2 a_{k-1}) m - c_3 a_{k-1} a_{m-1} - c_0 (k+1) a_{k+1}\\
\! & \! \! & \! \qquad
- \, c_1 k a_{k} +(-c_2 a_{k-1}-c_3 a_{k-2}) k+ c_2 a_{k-1} + 2 c_3 a_{k-2} \big] y^k .
\end{eqnarray*}
Finally, we subtract from $\tilde{g}_3(y)$ the expression 
\begin{gather*}
(-c_1 m-c_3 a_{m-1}^2+c_2 a_{m-1}+2 c_3 a_{m-2} ) g(y)
 \end{gather*}
and obtain the polynomial 
\begin{eqnarray*}
\tilde{g}_{4}(y) \! & \! = \! & \! \sum_{k=0}^{m-1} \big[ a_k' + ((a_{m-1}^2 - 2 a_{m - 2}) a_{k} - k a_{k - 2} - a_{m - 1} a_{k - 1} + a_{m - 3} m + 2 a_{k - 2}) c_3\\ 
\! & \! \! & \! \qquad
- \, ( a_{m - 1} c_2 + c_1 (k - m)) a_{k} - c_2 (k - m - 1) a_{k - 1} - c_0 (k+1) a_{k + 1} \big] y^k\\
\! & \! = \! & \! \sum_{k=0}^{m-1} \big[ a_k' - c_3(( 2 a_{m - 2}-a_{m-1}^2) a_{k} -  m  a_{m - 3} + k a_{k - 2} + a_{m - 1} a_{k - 1} - 2 a_{k - 2})\\ 
\! & \! \! & \! \qquad
- \, ( c_2  a_{m - 1} -m c_1 + c_1 k ) a_{k} - (k - m - 1) c_2 a_{k - 1} - (k+1) c_0 a_{k + 1} \big] y^k,
\end{eqnarray*}
which is of degree at most $m-1$. In order for $\tilde{g}_{4}(y)$ to be zero, all coefficients have to be zero.
But this corresponds to a solution 
$$(\overline{y}_0,\dots,\overline{y}_{m-1}) \, = \, (a_0,\dots a_{m-1}) \in \otherfield^m$$
of the system of differential equations in the statement of the lemma. Since by assumption such a solution does not exists, 
$\tilde{g}_{4}(y)$ is not zero and so by the induction hypothesis, $\langle f(y),g(y) \rangle = \langle f(y), \tilde{g}_{4}(y) \rangle = \otherfield\{y \}$.
\end{proof} 

According to the last lemma the ideal $\sigma(I_{\lweyl})=\langle  \sigma(f^+_{\lweyl,3} ) \rangle $ is 
a maximal differential ideal in $E \langle \boldsymbol{r} \rangle \{(b_*)_3\}$, if for all $m >0$ the system of differential equations in
Lemma~\ref{lem:maxdiffideal} over $E \langle \boldsymbol{r} \rangle$ has no solutions in $E \langle \boldsymbol{r} \rangle$. The remainder of this section 
is dedicated to the verification of this condition.
\begin{lemma}\label{lem:technical1}
Let $\otherfield$ be a differential field of characteristic zero and let $r$ be a differential indeterminate over $\otherfield$.
Let $a$, $b \in \otherfield \{ r\}$, $b \neq 0$, such that $$\frac{a}{b} \notin \otherfield(r^{(j)} \mid 0 \leq j \leq d-1),$$
where $d=\maxd \{\ord(a),\ord(b)\}$. 
Then $\ord(a'b-ab')=d+1$.
\end{lemma}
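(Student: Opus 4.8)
The plan is to isolate the coefficient of the highest-appearing derivative in $a'b - ab'$ and to recognize it as a partial derivative of $a/b$. I would write $c := r^{(d)}$ and set $L := \otherfield(r, r', \dots, r^{(d-1)})$. Since $r$ is a differential indeterminate, the elements $r, r', r'', \dots$ are algebraically independent over $\otherfield$, so $c$ is transcendental over $L$ and $L[c] \subseteq \otherfield\{ r \}$. Because $d = \maxd\{\ord(a), \ord(b)\}$, both $a$ and $b$ lie in $L[c]$, i.e.\ they are polynomials in $c$ with coefficients in $\otherfield[r, \dots, r^{(d-1)}]$. Throughout I would denote by $\partial/\partial c$ the usual partial derivative on $L[c]$, which is the same as the partial derivative with respect to $r^{(d)}$.

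Next I would differentiate. Applying the derivation $'$ to $a \in \otherfield[r, \dots, r^{(d)}]$ produces $r^{(d+1)}$ only through the chain-rule term belonging to the variable $c = r^{(d)}$, so
\[
a' \, = \, \frac{\partial a}{\partial c}\, r^{(d+1)} + A, \qquad b' \, = \, \frac{\partial b}{\partial c}\, r^{(d+1)} + B,
\]
where $A, B \in \otherfield\{ r \}$ have order at most $d$ (they collect the derivatives of the coefficients in $\otherfield$ together with the terms $\tfrac{\partial a}{\partial r^{(j)}} r^{(j+1)}$ for $j < d$). Since $a$, $b$, $\tfrac{\partial a}{\partial c}$, $\tfrac{\partial b}{\partial c}$, $A$, $B$ all have order at most $d$, substituting gives
\[
a'b - ab' \, = \, \Bigl( \frac{\partial a}{\partial c}\, b - a\, \frac{\partial b}{\partial c} \Bigr) r^{(d+1)} + (A b - a B),
\]
in which the bracketed coefficient and the summand $Ab - aB$ both have order at most $d$. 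Reading off the coefficient of $r^{(d+1)}$, I conclude that $\ord(a'b - ab') \le d+1$, with equality precisely when $\tfrac{\partial a}{\partial c}\, b - a\, \tfrac{\partial b}{\partial c} \neq 0$.

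It then remains to tie this nonvanishing to the hypothesis. Working in the field $L(c)$ — which is legitimate because $b \neq 0$ — I would invoke the quotient rule
\[
\frac{\partial}{\partial c}\Bigl( \frac{a}{b} \Bigr) \, = \, \frac{\frac{\partial a}{\partial c}\, b - a\, \frac{\partial b}{\partial c}}{b^2},
\]
so that $\tfrac{\partial a}{\partial c}\, b - a\, \tfrac{\partial b}{\partial c} = 0$ if and only if $a/b$ lies in the kernel of $\partial/\partial c$ on $L(c)$. As $L(c)$ is a purely transcendental extension of $L$ of characteristic zero, that kernel is exactly $L = \otherfield(r^{(j)} \mid 0 \le j \le d-1)$. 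The hypothesis asserts $a/b \notin \otherfield(r^{(j)} \mid 0 \le j \le d-1)$, whence the coefficient is nonzero and $\ord(a'b - ab') = d+1$.

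The argument is essentially elementary; the only points demanding care are the bookkeeping that the non-$r^{(d+1)}$ contributions genuinely have order at most $d$ (so no cancellation can raise the order of $a'b - ab'$ beyond $d+1$ nor lower the claimed leading term) and the identification of the field of constants of $(L(c), \partial/\partial c)$ with $L$. The conceptual step — and the reason the hypothesis on $a/b$ is exactly the right one — is the observation that the leading coefficient $\tfrac{\partial a}{\partial c}\, b - a\, \tfrac{\partial b}{\partial c}$ is, up to the nonzero factor $b^2$, the $c$-derivative of $a/b$, so it vanishes if and only if $a/b$ is free of $r^{(d)}$.
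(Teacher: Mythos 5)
Your proof is correct and takes essentially the same approach as the paper: both extract the coefficient of $r^{(d+1)}$ in $a'b-ab'$, namely $\sep(a)\,b-a\,\sep(b)$, observe the remainder has order at most $d$, and show this coefficient is nonzero using the hypothesis on $a/b$. The only difference is cosmetic: the paper splits into the cases $\ord(a)>\ord(b)$, $\ord(a)<\ord(b)$ and $\ord(a)=\ord(b)$, handling the last via a Wronskian/linear-dependence criterion for the derivation $\partial/\partial r^{(d)}$, whereas you treat all cases uniformly by recognizing the coefficient as $b^2$ times the $r^{(d)}$-partial derivative of $a/b$ and using that the constants of $\bigl(L(c),\partial/\partial c\bigr)$ are exactly $L=\otherfield(r^{(j)} \mid 0 \leq j \leq d-1)$ --- the same fact in quotient-rule rather than Wronskian form.
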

\begin{proof}
First assume $\ord(a)>\ord(b)$. Then the derivative of $a$ is 
$$ a'= \sep(a) r^{(d+1)} + g_a, \qquad \sep(a) \, = \, \partial_{r^{(d)}}(a), $$
where $g_a \in \otherfield[r^{(j)}\mid 0 \leq j \leq d ]$ and $\partial_{r^{(d)}}=\partial/\partial r^{(d)}$.
Since $a \notin \otherfield$, we have $\sep(a)\neq0$. By assumption $\ord(b)<d$ and so we have $\ord(ab')\leq d$. With $\ord(a')=d+1$ we conclude that $\ord(a'b)=d+1$ and so $\ord(a'b-b'a)=d+1$.

In case $\ord(a)<\ord(b)$ the claim follows as in the previous case.

Finally assume $\ord(a)=\ord(b)$. We compute 
$$a'b-ab' \, = \, r^{(d+1)}(\sep(a)b- \sep(b)a )  +  g $$
with $g \in \otherfield[r^{(j)}\mid 0 \leq j \leq d ]$.
We consider now the expression
$\sep(a)b- \sep(b)a$ as an element of the differential ring
$$ \otherfield[r^{(j)}\mid 0 \leq j \leq d-1 ][r^{(d)}] $$ with derivation $\partial_{r^{(d)}}$. 
Recognizing the coefficient of $r^{(d+1)}$ as a Wronskian determinate, we obtain that 
$$\sep(a)b- \sep(b)a \, = \, 0$$
if and only if $a,b$ are $ \otherfield(r^{(j)}\mid 0 \leq j \leq d-1 )$-linearly dependent, i.e.~$a=eb$ with 
$e \in \otherfield(r^{(j)}\mid 0 \leq j \leq d-1 ])$, which is not the case by assumption.
\end{proof}

\begin{lemma}\label{lem:technical2}
Let $a$, $b \in E [r_1,r_2] \subset E\{r_1,r_2\}$.
Then in $a'b-ab'$ the indeterminate $r_1'$ does not appear if and only if $a/b \in E(r_2)$. 
\end{lemma}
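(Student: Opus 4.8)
The plan is to reduce the claim to a formal partial-derivative computation inside the rational function field $E(r_1,r_2)$. First I would record how the derivation of $E\{r_1,r_2\}$ acts on an element $a=\sum_{i,j} a_{ij}\, r_1^i r_2^j\in E[r_1,r_2]$. By the Leibniz and chain rules,
\[
a' \, = \, a^{\partial} + (\partial_{r_1} a)\, r_1' + (\partial_{r_2} a)\, r_2',
\]
where $a^{\partial}:=\sum_{i,j} a_{ij}'\, r_1^i r_2^j$ is obtained by applying the derivation of $E$ to the coefficients of $a$, and $\partial_{r_1},\partial_{r_2}$ denote the formal partial derivatives with respect to the polynomial variables $r_1,r_2$. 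The key point is that $a^{\partial}$, $\partial_{r_1} a$ and $\partial_{r_2} a$ all lie in $E[r_1,r_2]$ and therefore involve none of the derivatives $r_1',r_2'$; the same holds for $b$.

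Next I would form $a'b-ab'$. Since $a$ and $b$ carry no derivatives of $r_1,r_2$, this expression is linear in $r_1',r_2'$ with coefficients in $E[r_1,r_2]$, and a short computation shows that its coefficient of $r_1'$ is exactly
\[
(\partial_{r_1} a)\, b - a\,(\partial_{r_1} b).
\]
As this coefficient lies in $E[r_1,r_2]$ and no other term of $a'b-ab'$ contributes $r_1'$, the indeterminate $r_1'$ is absent from $a'b-ab'$ if and only if $(\partial_{r_1} a)\, b - a\,(\partial_{r_1} b)=0$ in $E[r_1,r_2]$.

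It then remains to translate this vanishing condition into a statement about $a/b$ (which presupposes $b\neq 0$, as the quotient requires). By the quotient rule in $E(r_1,r_2)$ one has
\[
\partial_{r_1}\!\left(\frac{a}{b}\right) \, = \, \frac{(\partial_{r_1} a)\, b - a\,(\partial_{r_1} b)}{b^2},
\]
so the coefficient above vanishes precisely when $\partial_{r_1}(a/b)=0$. Finally I would identify the kernel of $\partial_{r_1}$: viewing $E(r_1,r_2)=E(r_2)(r_1)$ as a rational function field in the single variable $r_1$ over the field $E(r_2)$ of characteristic zero, an element with vanishing $r_1$-derivative must lie in $E(r_2)$. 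Hence $\partial_{r_1}(a/b)=0$ if and only if $a/b\in E(r_2)$, and chaining the equivalences gives the lemma.

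I expect no serious obstacle: the argument is essentially bookkeeping of the chain rule together with the standard fact that $\ker(\partial_{r_1})=E(r_2)$ on $E(r_2)(r_1)$ in characteristic zero. The one point deserving explicit justification is this last fact, which I would establish by writing $a/b$ in lowest terms $p/q$ with $p,q\in E(r_2)[r_1]$ coprime: from $(\partial_{r_1} p)\, q = p\,(\partial_{r_1} q)$ and coprimality one gets $q\mid \partial_{r_1} q$, forcing $\partial_{r_1} q=0$ for degree reasons and then $\partial_{r_1} p=0$, so that $p,q\in E(r_2)$ and $a/b\in E(r_2)$.
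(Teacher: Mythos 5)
Your proof is correct, and its skeleton coincides with the paper's: both expand $a'$ and $b'$ via the chain rule, observe that $a'b-ab'$ is linear in $r_1'$ and $r_2'$ with the coefficient of $r_1'$ equal to $(\partial_{r_1}a)\,b - a\,(\partial_{r_1}b) \in E[r_1,r_2]$, and thereby reduce the lemma to the vanishing of that coefficient. The one divergence is how that vanishing is translated into $a/b \in E(r_2)$: the paper views $a,b$ in the differential ring $E[r_2][r_1]$ with derivation $\partial_{r_1}$, recognizes the coefficient as a Wronskian determinant, and cites the criterion that a Wronskian vanishes if and only if the two elements are linearly dependent over the constants, which here are $E(r_2)$; you instead identify the coefficient as the numerator of $\partial_{r_1}(a/b)$ via the quotient rule and then prove $\ker(\partial_{r_1}) = E(r_2)$ on $E(r_2)(r_1)$ from scratch by the lowest-terms argument ($q \mid \partial_{r_1}q$ forces $\partial_{r_1}q = 0$ for degree reasons in characteristic zero). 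These are two standard packagings of the same fact; yours is marginally more self-contained, since you justify the kernel statement rather than invoking the Wronskian dependence criterion, at the cost of a few extra lines. Your observation that $b \neq 0$ is implicitly required is also apt, as the lemma's statement omits it while the preceding lemma in the paper states it explicitly.
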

\begin{proof}
The derivatives of $a$ and $b$ are
\begin{eqnarray*}
a' \! & \! = \! & \! r_1' \partial_{r_1}(a)+ g_a,\\[0.2em]
b' \! & \! = \! & \! r_1' \partial_{r_1}(b)+ g_b,
\end{eqnarray*}
where $g_a$, $g_b \in E[r_1,r_2,r_2']$ and $\partial_{r_1}=\partial/\partial r_1$.
This yields
$$a'b-ab' \,  = \, r_1'\big(\partial_{r_1}(a)b- a \partial_{r_1}(b) \big) + (g_a b -a g_b)  .$$
  We consider $a,b$ as elements of the differential ring $E[r_2][r_1]$ with 
derivation $\partial_{r_1}$.
Then, recognizing the coefficient of $r_1'$ as a
Wronskian determinant, we have
$$\partial_{r_1}(a)b- a \partial_{r_1}(b) \,= \, 0$$ if and only if 
$a,b$ are $E(r_2)$-linearly dependent, i.e.~$a=eb$ with $e \in E(r_2)$.
\end{proof}

\begin{lemma}
For the coefficients of $\sigma(f^+_{\lweyl,3} ) \in E\langle r_1,r_2\rangle\{(b_*)_3 \}$
the system of Lemma~\ref{lem:maxdiffideal} has for all $m >0$ no solution in $E\langle r_1,r_2\rangle$, i.e.~the 
assumptions of the lemma are satisfied.
\end{lemma}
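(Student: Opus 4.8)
The plan is to fix $m \ge 1$, assume that $(y_0,\dots,y_{m-1}) \in E\langle r_1,r_2\rangle^m$ solves the associated system $\Sigma_m$, and derive a contradiction. The key structural remark is that $\sigma(f^+_{\lweyl,3})$ is only quadratic in $(b_*)_3$, so in the notation of Lemma~\ref{lem:maxdiffideal} one has $c_3=0$ and $c_2=-(a_*)^+_3 \in E$, while $c_1 \in E[r_1,r_2]$ has $r_1$-degree one and $c_0 = (\text{a polynomial in } r_1,r_2) - r_1 r_2'$ likewise has $r_1$-degree one. Writing $c_1 = c_1^{(0)} + c_1^{(1)} r_1$ and $c_0 = c_0^{(0)} + c_0^{(1)} r_1$ with $c_1^{(0)},c_1^{(1)},c_0^{(0)} \in E[r_2]$, I record the two facts that will drive the argument: first, the derivative $r_1'$ occurs in none of $c_0,c_1,c_2$; second, $c_0^{(1)}$ is a polynomial in $r_2$ minus $r_2'$, hence (as $r_2'$ is transcendental over $E(r_2)$) satisfies $c_0^{(1)} \ne 0$ and $\ord_{r_2}(c_0^{(1)})=1$.

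First I would show that every $y_k$ lies in $E\langle r_2\rangle$, i.e.\ involves neither $r_1$ nor any of its derivatives. Viewing $r_1$ as a differential indeterminate over the base field $E\langle r_2\rangle$ and writing $y_k = a_k/b_k$ with $a_k,b_k \in E\langle r_2\rangle\{r_1\}$, suppose $d := \max_k \ord_{r_1}(y_k) \ge 0$ and choose $k^*$ attaining it. Then $y_{k^*} \notin E\langle r_2\rangle(r_1,\dots,r_1^{(d-1)})$, so Lemma~\ref{lem:technical1} gives $\ord_{r_1}(a_{k^*}'b_{k^*}-a_{k^*}b_{k^*}')=d+1$; since $b_{k^*}^2$ has $r_1$-order at most $d$, the variable $r_1^{(d+1)}$ survives in $y_{k^*}'$ and thus $\ord_{r_1}(y_{k^*}')=d+1$. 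On the other hand the right hand side of the $k^*$-th equation of $\Sigma_m$ is a sum of products of the coefficients $c_i$ (all of $r_1$-order zero) with the $y_j$ (all of $r_1$-order at most $d$), so its $r_1$-order is at most $d$. This is impossible, whence $d=-\infty$ and $y_0,\dots,y_{m-1} \in E\langle r_2\rangle$. (Lemma~\ref{lem:technical2} is the prototype of this $r_1'$-detection: over $E[r_1,r_2]$ the variable $r_1'$ survives in such a logarithmic-derivative numerator precisely when the quotient is not already free of $r_1$.)

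With all $y_k \in E\langle r_2\rangle$ I would now treat each equation of $\Sigma_m$ as an identity in $E\langle r_2\rangle[r_1]$ and compare powers of $r_1$. Comparing coefficients of $r_1^1$ yields the purely algebraic relations $(m-k)c_1^{(1)} y_k = (k+1)c_0^{(1)} y_{k+1}$ for $0 \le k \le m-1$, with the convention $y_m=1$. If $c_1^{(1)}=0$, the case $k=m-1$ forces $m c_0^{(1)}=0$, impossible; hence $c_1^{(1)} \ne 0$ and the relations unwind to $y_k=\binom{m}{k}\bigl(c_0^{(1)}/c_1^{(1)}\bigr)^{m-k}$, in particular $y_0=\bigl(c_0^{(1)}/c_1^{(1)}\bigr)^m$. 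Since $c_0^{(1)}$ contains the term $-r_2'$ and $c_1^{(1)} \in E[r_2]$, the quotient $c_0^{(1)}/c_1^{(1)}$ lies outside $E(r_2)$, so $y_0 \notin E(r_2)$ and $\ord_{r_2}(y_0)=1$; Lemma~\ref{lem:technical1} (applied with base field $E$ and indeterminate $r_2$) then gives $\ord_{r_2}(y_0')=2$. Finally, comparing coefficients of $r_1^0$ in the $k=0$ equation produces $y_0'=(c_2 y_{m-1}-m c_1^{(0)})y_0 + c_0^{(0)} y_1$, whose right hand side has $r_2$-order at most one because $c_2 \in E$ and $c_1^{(0)},c_0^{(0)} \in E[r_2]$ while each $y_j$ has $r_2$-order at most one. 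This contradicts $\ord_{r_2}(y_0')=2$ and completes the argument for every $m$.

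The hard part is the reduction of the second paragraph: the $y_k$ range over the full fraction field $E\langle r_1,r_2\rangle$, so one must control the $r_1$-order of a differentiated quotient and, crucially, ensure that the top derivative $r_1^{(d+1)}$ produced on the left does not silently cancel. This non-cancellation is exactly the Wronskian-type non-vanishing supplied by Lemmas~\ref{lem:technical1} and \ref{lem:technical2}, and the entire proof rests on the observation that $r_1'$ is absent from all of $c_0,c_1,c_2$, so that it can only enter an equation through the left hand differentiation. Once the solution is pinned to $E\langle r_2\rangle$, the remaining obstruction is genuinely elementary: the coefficient of $r_1$ forces each $y_k$ into an explicit rational expression carrying $r_2'$, and a single order count in $r_2$ then defeats the $k=0$ equation uniformly in $m$.
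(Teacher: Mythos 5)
Your argument is correct, but it is organized quite differently from the paper's own proof. The paper proceeds by an exhaustive case distinction: solutions in $E^m$ are excluded by the non-cancelling monomial $r_1r_2'$ in the constant term $mc_0$ of the last equation; solutions with a component outside $E\langle r_1\rangle(r_2)$, respectively outside $E\langle r_2\rangle(r_1)$, are excluded by the order counts of Lemma~\ref{lem:technical1} with $d\ge 1$; and the residual case of solutions in $E(r_1,r_2)^m$ is handled by detecting $r_1'$ in the Wronskian numerator of the last equation via Lemma~\ref{lem:technical2} and then exploiting $r_1r_2'$ once more. You instead push the $r_1$-order count all the way down to $d=0$: because $c_0,c_1,c_2$ involve no derivatives of $r_1$, Lemma~\ref{lem:technical1} over the base field $E\langle r_2\rangle$ (its statement and proof do cover $d=0$, the hypothesis then reading $a/b\notin E\langle r_2\rangle$) confines every solution to $E\langle r_2\rangle^m$ in a single stroke; this subsumes the paper's third case and renders Lemma~\ref{lem:technical2} superfluous. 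Your endgame is also new: you exploit the special shape of the $\SL_3$ specialization --- $c_3=0$ and $c_0,c_1$ affine in $r_1$ --- so that, $r_1$ being transcendental over $E\langle r_2\rangle$, comparison of coefficients at $r_1^1$ solves $\Sigma_m$ in closed form, $y_k=\binom{m}{k}\bigl(c_0^{(1)}/c_1^{(1)}\bigr)^{m-k}$ (with the degenerate case $c_1^{(1)}=0$ correctly disposed of first, as it genuinely occurs when $(a_*)^+_1=(a_*)^+_3=0$), and the $r_1^0$ part of the $k=0$ equation then fails because $\ord_{r_2}(y_0')=2$ exceeds the order $\le 1$ of the right-hand side; this replaces the paper's a priori $r_2$-order bound by an explicit formula that controls the $r_2'$-content automatically. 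The trade-off: the paper's argument is more robust, using only that the coefficients have $r_2$-order at most one, are free of derivatives of $r_1$, and that $c_0$ contains $r_1r_2'$, so it does not depend on $c_3=0$ or on the affine-in-$r_1$ structure; yours is shorter, needs only one of the two technical lemmas, and is constructive, exhibiting the unique candidate tuple and the precise equation it violates. Two small points of hygiene: fix completely reduced fractions $y_k=a_k/b_k$ before taking orders --- reducedness is what makes $y_{k^*}\notin E\langle r_2\rangle(r_1,\ldots,r_1^{(d-1)})$, i.e.\ the hypothesis of Lemma~\ref{lem:technical1}, automatic, as the paper's proof does explicitly --- and the inference from $q\notin E(r_2)$ to $y_0=q^m\notin E(r_2)$ should be justified not by the non-membership alone (taking $m$-th powers can destroy it in general) but by the fact that $q$ has $r_2'$-degree exactly one over $E(r_2)$, so $q^m$ has $r_2'$-degree $m\ge 1$; with these trivial repairs the proof is complete.
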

\begin{proof}
In the expansion of $f((b_*)_3) = \sigma(f^+_{\lweyl,3})$ in the statement of Lemma~\ref{lem:maxdiffideal} only the coefficient $c_0$
contains a term with a proper derivative which is $r_1  r_2'$.

First, we prove that for every $m$ there is no solution $(\overline{y}_0,\dots,\overline{y}_{m-1}) \in E^m$. We consider the 
last equation 
of the system
whose only constant term (with respect to $y_0$, \ldots, $y_{m-1}$) is $m c_0$.
We argue that the term $r_1  r_2'$ that is thus introduced through $c_0$ into the last equation does not cancel.
In fact, since the coefficient of $r_1  r_2'$ is the constant $-1$ and $r_1$, $r_2$ are differentially  algebraically independent over $E$,
substitution of a putative solution $(\overline{y}_0,\dots,\overline{y}_{m-1}) \in E^m$
does not lead to a cancellation of $r_1  r_2'$.

Next we prove that for every $m>0$ the system has no solution $(\overline{y}_0,\dots,\overline{y}_{m-1}) $ in $E\langle r_1,r_2 \rangle^m$ 
with the property that at least one $\overline{y}_i$ satisfies $\overline{y}_i \notin E\langle r_1 \rangle (r_2)$. Suppose there is such a solution and write every $\overline{y}_i$ as a completely reduced fraction
$a_i/b_i$ with $a_i$, $b_i \in E\{r_1,r_2\}$, $b_i\neq0$. We consider then the orders of all $a_i$, $b_i$ in the differential indeterminate $r_2$
and choose an index $k$ where either $a_k$ or $b_k$ has highest order $d$. Then the assumption on the solution implies $d\geq1$.
We substitute the solution into the system and consider the $k$-th equation. We clear the denominator on the left hand side by multiplying both sides by $b_k^2$, which yields the left hand side $a_k'b_k-a_kb_k'$.
The assumption that $a_k/b_k$ is completely reduced implies 
$$\frac{a_k}{b_k} \notin E\langle r_1\rangle (r_2,\dots,r_2^{(d-1)})$$
and so Lemma~\ref{lem:technical1} applied with $K=E\langle r_1\rangle$ yields that the left hand 
side has order $d+1$ in $r_2$.
Since the maximal order in $r_2$ among the coefficients on the right hand side is one and the maximal order among the $a_i$, $b_i$ is $d\geq1$, 
we conclude that the order in $r_2$ of the right hand side is at most $d\geq1$ while the left hand side has order $d+1$, which implies that such a solution does not exist.

Proceeding in an analogous way with the roles of $r_1$ and $r_2$ exchanged, we conclude that
no solution $(\overline{y}_0,\dots,\overline{y}_{m-1})$ in $E\langle r_1,r_2 \rangle^m$ exists 
where at least one $\overline{y}_i$ satisfies $\overline{y}_i \notin E\langle r_2 \rangle (r_1)$,
where the contradiction is derived by observing that the right hand side has order at most $d \ge 1$ in $r_1$, whereas the left hand side has order $d+1$ in $r_1$.

Finally we prove that for every $m \ge 1$ there is no solution $(\overline{y}_0,\dots,\overline{y}_{m-1}) \in E(r_1,r_2)^m$ of $\Sigma_m$. Suppose that for some $m$ there is one and assume that each $\overline{y}_i = a_i/b_i$ is completely reduced.
We consider the last equation of the system.
After substitution of $(\overline{y}_0,\dots,\overline{y}_{m-1})$ into the equation we again clear the denominator on the left hand side by multiplying through by $b_{m-1}^2$.
The term $r_1r_2'$ on the right hand side does not cancel because all other terms are elements of $E[r_1,r_2]$.
Moreover, the indeterminate $r_1'$ does not occur on the right hand side, so that the left hand side  $a'_{m-1}b_{m-1}-a_{m-1}b'_{m-1}$ is independent of $r_1'$, too.
By Lemma~\ref{lem:technical2} this is the case if and only if $a_{m-1}/b_{m-1} \in E(r_2)$. This forces  the left hand side to be in $E\{ r_2 \}$, but the right hand side contains the term $r_1r_2'$.
\end{proof}

\bibliographystyle{alpha}
\bibliography{main}

\end{document}